\numberwithin{equation}{section}  
\newtheorem{punkt}{}[section]
\theoremstyle{plain}
\newtheorem{corollary}[punkt]{Corollary}
\newtheorem{lemma}[punkt]{Lemma}
\newtheorem{proposition}[punkt]{Proposition}
\newtheorem{theorem}[punkt]{Theorem}
\newcommand{\eins}{\leavevmode\hbox{\small1\kern-3.8pt\normalsize1}}
\theoremstyle{definition}
\newtheorem{remark}[punkt]{Remark}
\newtheorem{examples}[punkt]{Examples}
\newtheorem{definition}[punkt]{Definition}
\theoremstyle{plain}
\newtheorem*{corollary*}{Corollary}
\newtheorem*{lemma*}{Lemma}
\newtheorem*{proposition*}{Proposition}
\newtheorem*{theorem*}{Theorem}
\theoremstyle{definition}
\newtheorem*{remark*}{Remark}
\newtheorem*{remarks*}{Remarks}
\newtheorem*{example*}{Example}
\newtheorem*{examples*}{Examples}
\newtheorem*{problem*}{Problem}
\newtheorem*{problems*}{Problems}
\newtheorem*{question*}{Question}
\newtheorem*{questions*}{Questions}
\newtheorem*{definition*}{Definition}
\newtheorem*{conjecture*}{Conjecture}
\newtheorem*{assumption*}{Assumption}
\newtheorem*{assumptions*}{Assumptions}
\newtheorem*{construction*}{Construction}
\def\mycmplx{\mathbb{C}}
\def\mynat{\mathbb{N}}
\def\myreal{\mathbb{R}}
\def\mys{\mathcal{S}}
\def\ee{\mathbb{E}}		% the default expectation
\def\re{\qopname\relax{no}{Re}\,}
\def\im{\qopname\relax{no}{Im}\,}
\def\diag{\qopname\relax{no}{diag}}
\def\myparagraph#1{\paragraph{\textbf{#1.}}}
\def\eg{e.g.\@\xspace}
\def\ie{i.e.\@\xspace}
\def\GL{\qopname\relax{no}{GL}}
\def\SL{\qopname\relax{no}{SL}}
\def\Pos{\qopname\relax{no}{Pos}}
\def\laplace{\mathcal{L}}
\def\mellin{\mathcal{M}}
\def\FSV{f_{\operatorname{SV}}}		% should we better call it SSV ?
\def\FEV{f_{\operatorname{EV}}}
\def\PF{\operatorname{PF}}			% the class of Polya frequency functions
\begin{document}

\title[Products of Random Matrices from Polynomial Ensembles]{Products of Random Matrices from \\ Polynomial Ensembles}
\author[Mario Kieburg and Holger K\"osters]{Mario Kieburg$^{1,2,*}$ and Holger K\"osters$^{3,\dagger}$}
 \address{$^1$Faculty of Physics \\
 University of Duisburg-Essen \\
 Lotharstr. 1, D-47048 Duisburg,
 Germany}
 \address{$^2$Department of Physics\\
 Bielefeld University\\
 Postfach 100131,
 D-33501 Bielefeld, Germany}
 \address{$^3$Department of Mathematics\\
 Bielefeld University\\
 Postfach 100131,
 D-33501 Bielefeld, Germany}
\email{$^*$ mkieburg@physik.uni-bielefeld.de}
\email{$^\dagger$ hkoesters@math.uni-bielefeld.de}

\begin{abstract}
Very recently we have shown that the spherical transform is a con\-venient tool 
for studying the relation between the joint density of the singular values 
and that of the eigenvalues for bi-unitarily invariant random matrices.
In the present work we discuss the implications of these results for products of random matrices.
In particular, we derive a transformation formula for the joint densities 
of a product of two independent bi-unitarily invariant random matrices,
the first from a polynomial ensemble
and the second from a polynomial ensemble of derivative type.
This allows us to re-derive and generalize a number of recent results in random matrix theory,
including a transformation formula for the kernels of the corresponding determinantal point processes.
Starting from these results, we construct a continuous family
of random matrix ensembles interpolating between the products of different numbers of
Ginibre matrices and inverse Ginibre matrices. Furthermore, we make contact to the asymptotic distribution of the Lyapunov exponents of the products of a large number of bi-unitarily invariant random matrices of fixed dimension.
\end{abstract}

\date{\today}
\keywords{products of independent random matrices; polynomial ensembles; 
singular value distributions; eigenvalue distributions; spherical transform;
multiplicative con\-volution; infinite divisibility; Lyapunov exponents; stability exponents.}

\maketitle

% ****************************************************************************************************

\section{Introduction}
\label{sec:Introduction}

The distributions of the singular values and the eigenvalues 
of products of independent complex random matrices
have been an intense subject of research in~the past few years.
Mostly, this was fuelled by two major developments. 
On the one hand, the asymptotic \emph{global} distributions
(also known as the macroscopic level densities) 
of the singular values and of the eigenvalues could be determined
for certain products of random matrices in the limit of large matrix dimension
\cite{ARRS:2013,AGT:2010,B:2011,BJW:2010,BJLNS:2011,BNS:2012,Forrester:2014,GKT:2015,GT:2010b,KT:2015,MNPZ:2014,OS:2011}, often with the help of free probability.
On the other hand, it was possible to investigate the \emph{local} correlations 
of the singular values and of the eigenvalues, both at finite and infinite matrix dimensions
\cite{ARRS:2013,AB:2012,AIK:2013,AKW:2013,CKW:2015,Forrester:2014,IK:2013,KS:2014,KZ:2014,LWZ:2014}.
See also the surveys \cite{Burda:2013} and \cite{AI:2015} for extended overviews.

For the investigation of the local spectral statistics,
the first step is usually the derivation 
of the joint probability density functions (henceforward called joint densities)
of the singular values or of the eigenvalues at finite matrix dimension.
As~witnessed by many results from the last years, the joint densities of matrix products
may still exhibit a determinantal structure when the joint densities of the underlying factors do. 
Such a structure is more than advantageous in studying the~spectral statistics and their asymptotics. 
For~instance, for a matrix product
\begin{align}
\label{eq:GaussianProduct}
X = Z_1 \cdots Z_p Z_{p+1}^{-1} \cdots Z_{p+q}^{-1} \,,
\end{align}
where $p,q \in \mynat_0$ are such that $p+q \geq 1$ 
and $Z_1,\hdots,Z_{p+q}$ are independent (complex) Ginibre matrices of dimension $n \times n$,
the joint density of the squared singular values is of the form
\begin{align}
\label{eq:SVD}
\FSV(a)=C_{\rm sv} \Delta_n(a) \det(w_{j-1}(a_k))_{j,k=1,\hdots,n} \,, \qquad a \in (0,\infty)^n
\end{align}
with a constant $C_{\rm sv}$, 
see e.g.\@ Refs.~\cite{AKW:2013,Forrester:2014,KZ:2014,LWZ:2014},
while that of the eigenvalues reads
\begin{align}
\label{eq:EVD}
\FEV(z)=C_{\rm ev} |\Delta_n(z)|^2 \prod_{j=1}^{n} w(|z_j|^2) \,, \qquad z \in \mathbb{C}^n
\end{align}
with another but related constant $C_{\rm ev}$, see e.g.\@ Refs.~\cite{AB:2012,ARRS:2013,IK:2013}. 
Here, $\Delta_n(x) := \prod_{1 \leq j < k \leq n} (x_k - x_j)$ is the Vandermonde determinant,
and $w_0,\hdots,w_{n-1}$ and $w$ are certain weight functions depending on $n$, $p$ and $q$, see Section~\ref{sec:InterpolatingEnsembles} for details.
Similar results were also established for products of rectangular matrices 
with independent complex Gaussian entries (induced Laguerre ensemble) 
\cite{AIK:2013,IK:2013,Forrester:2014} and of truncated unitary ensembles  
(induced Jacobi ensemble) \cite{IK:2013,ABKN:2014,KKS:2015,CKW:2015}. 
In all these cases, the functions $w_0,\ldots,w_{n-1}$ and $w$ admit compact expressions 
in terms of the Meijer-G function~\cite{Abramowitz}, which is why these ensembles 
of product matrices are also called \emph{Meijer G-ensembles} \cite{KK:2016a}.

The joint densities~\eqref{eq:SVD} and \eqref{eq:EVD} are closely related to each other. 
In the case of Meijer G-ensembles this relation is given by 
\begin{align}
\label{eq:link-0}
\Big( {-}a \partial_a \Big)^j w(a) = w_j(a) \,,\qquad j = 0,\hdots,n-1 \,.
\end{align}
In the very recent work~\cite{KK:2016a}, we constructed a linear operator 
mapping the joint density of the (squared) singular values to that of the eigenvalues,
see \cite[Section~3]{KK:2016a} for details.
This operator is called SEV (\underline{s}ingular value--\underline{e}igen\underline{v}alue) transform, 
and it~provides a bijection between the (squared) singular value and eigenvalue densities 
induced by general bi-unitarily invariant matrix densities, 
i.e.\@ by densities on matrix space which are unchanged when the argument is multiplied 
by an arbitrary unitary matrix from the left or from the right.
Furthermore, we were able to identify a subclass of polynomial ensembles
\cite{KS:2014,KKS:2015,Kuijlaars:2015,CKW:2015} called polynomial ensembles of derivative type
which admit joint densities of the form~\eqref{eq:SVD} and \eqref{eq:EVD},
with the weight functions related as in Eq.~\eqref{eq:link-0}.
This class extends the class of Meijer G-ensembles and even comprises some examples
of Muttalib-Borodin ensembles~\cite{Borodin,Muttalib}, which are generally no Meijer G-ensembles.

For some types of Meijer G-ensembles, as for induced Laguerre and Jacobi ensembles,
see e.g.\@ the review~\cite{AI:2015} and references therein, it is well known
that the operations of matrix multiplication and inversion do not lead 
out of the class of Meijer G-ensembles. 
It~is a natural question whether this statement extends to all polynomial ensembles of derivative type.
The main aim of the present work is to answer this question affirmatively.
Moreover, we will derive transformation formulas for the spectral densities
for the product of a random matrix from a polynomial ensemble of derivative type 
with an independent random matrix from an arbitrary polynomial ensemble, 
as discussed in \cite{KS:2014,KKS:2015,Kuijlaars:2015,CKW:2015}.

Let us briefly describe these results in more detail.
Let $X_1$ be an $n \times n$ random matrix drawn from a polynomial ensemble of derivative type,
\ie its matrix density is bi-unitarily invariant and the joint density
of its squared singular values is of the form \eqref{eq:SVD}, 
with weight functions $w_0,\hdots,w_{n-1}$ satisfying \eqref{eq:link-0}.
Moreover, let $X_2$ be an arbitrary $n \times n$ random matrix
for which the joint density of the squared singular values is of the form~\eqref{eq:SVD}, 
with weight functions $v_0,\ldots,v_{n-1}$. Then,
if $X_1$ and $X_2$ are independent, the joint density of the squared singular values 
of the product $X_1 X_2$ is also of the form \eqref{eq:SVD},
with weight functions $w \circledast v_0,\ldots,w \circledast v_{n-1}$,
where $\circledast$ denotes the multiplicative convolution on $(0,\infty)$, 
see Eq.~\eqref{eq:MellinConvolution} below.
Moreover, assuming additionally that the weight functions $v_j$ for the matrix $X_2$
satisfy Eq.~\eqref{eq:link-0} with $w$ replaced by $v$,
the weight functions $w \circledast v_j$ for the product $X_1 X_2$
satisfy Eq.~\eqref{eq:link-0} with $w$ replaced by $w \circledast v$.

From a technical perspective, our approach differs from the above-mentioned contributions to products of random matrices in that we use the (multivariate) \emph{spherical transform} instead of the (univariate) \emph{Mellin transform} as the main tool from harmonic analysis. See Section~\ref{sec:Preliminaries} for a short introduction. The spherical~trans\-form is a well-established tool for analysis and probability theory on matrix spaces \cite{FK,Helgason3,JL:SL2R,JL:SL2C,Terras}, and has been applied, for instance, to the study of the central limit theorem \cite{Sazonov-Tutubalin:1966,Bougerol:1981,Terras:1984,Richards:1989,GL:1995}, infinitely divisible distributions on matrix space \cite{Gangolli:1964,GL:1994,Applebaum:2001}, as well as problems in multivariate statistics \cite{HKKR:2011}.
However, it seems that this tool has not been exploited yet for the non-asymptotic investigation of the correlations of the singular values and of the eigenvalues of products of independent random matrices, a topic which has found considerable attention in the field of random matrix theory in the last few years.

For the derivation of our results, it will be essential that the \emph{spherical functions} associated with the group $\GL(n,\mycmplx)$ have the explicit representation \eqref{eq:SFDefinition}. This representation, which was discovered by Gelfand and Na\u{\i}mark~\cite{GelNai}, will serve as a substitute for the famous \emph{Harish-Chandra-Itzykson-Zuber integral} and related integrals. These integrals play a key role in the derivation of the squared singular value density \eqref{eq:SVD} for products of independent random matrices from the induced Laguerre and Jacobi ensemble.

As a first application of our approach, we will discuss the question whether it is possible to embed the matrix ensembles given by the products of Ginibre matrices and their inverses, see Eq.~\eqref{eq:GaussianProduct}, into a ``natural'' continuous family of matrix ensembles which is indexed by two positive parameters $p$~and~$q$ and which is closed under multiplicative convolution on matrix space; see Eq.~\eqref{eq:SphericalConvolution} below. We call the resulting ensembles \emph{interpolating ensembles}. In Section \ref{sec:InterpolatingEnsembles}, we will construct the interpolating densities and show that the corresponding joint densities of the squared singular values and eigenvalues are still of the form \eqref{eq:SVD} and \eqref{eq:EVD}, respectively, for appropriate choices of the weight functions $w_0,\hdots,w_{n-1}$ and $w$. Unfortunately, it turns out that the inter\-polating matrix densities are positive only under certain restrictions on the parameters $n$, $p$ and $q$. The same holds for the interpolating squared singular value densities, whereas the inter\-polating eigenvalue densities are always positive.

In addition to that, we will make contact with results about 
Lyapunov exponents and stability exponents for products of independent random matrices.
The investigation of limit theorems for Lyapunov exponents has a long history,
see~\eg \cite{Sazonov-Tutubalin:1966}, \cite{Bougerol-Lacroix:1985} and the references therein.
Interest in this area has resurged more recently due to explicit results for finite products
\cite{ABK:2014,Forrester:2013,Forrester:2015,Ipsen:2015,Kargin:2014,Reddy:2016}.
In particular, in~\cite{Reddy:2016} it~is shown that, under certain conditions, for products of independently and identically distributed, bi-unitarily invariant random matrices of fixed dimension, the logarithms of the singular values and  the complex eigenvalues are asymptotically Gaussian distributed as the number of factors tends to infinity.
We provide a~sketch of an alternative proof which is based on the spherical transform and which is reminiscent of the standard Fourier analytical proof of the central limit theorem for sums of random vectors. Our main motivation for including this sketch is that it provides some new insights about the class of polynomial ensembles of derivative type. For instance, it becomes clear that the Lyapunov exponents are asymptotically independent when the underlying random matrices are from this class. Moreover, we will obtain a more probabilistic characterization of the class of polynomial ensembles of derivative type.

The present work is organized as follows.
In Section \ref{sec:Preliminaries}, we introduce our notation and~recall the definition 
and the basic properties of the Mellin transform and the~spherical transform.
Also, we introduce the class of polynomial ensembles of derivative type
and summarize some results from Ref.~\cite{KK:2016a}.
Section~\ref{sec:MainResults} is devoted to the statements and the proofs 
of our main results on products of random matrices from polynomial ensembles. 
For illustration, we also provide some examples.
Section \ref{sec:InterpolatingEnsembles} contains the construction
and the investigation of the interpolating ensembles,
and Section~\ref{sec:Large-M} deals with the recent results
about Lyapunov exponents. Finally, we will summarize our results 
in Section~\ref{sec:conclusion} and give an outlook to some open questions.

% ****************************************************************************************************

\section{Preliminaries}
\label{sec:Preliminaries}

In this section, we introduce our notation and recall a number of known results
from random matrix theory and harmonic analysis which will be used later. 

We mainly use the notation of our work~\cite{KK:2016a}. 
For example, we write $\Delta_n(x) := \prod_{1 \le j < k \le n} (x_k - x_j)$ 
for the \emph{Vandermonde determinant} of a vector $x = (x_1,\hdots,x_n)$ in~$\myreal^n$ or~$\mycmplx^n$,
which we often identify with the diagonal matrix $x={\rm diag}(x_1,\ldots,x_n)$. 
Additionally, for abbreviation, we introduce the constant
\begin{equation}\label{const-def}
C_n^* := \frac{\pi^{n(n+1)/2}}{ \prod_{j=0}^{n-1} j!} \,,
\end{equation}
which appears in several integration formulas. 

% ****************************************************************************************************

\subsection{Matrix Spaces, Densities \& Operators}
\label{subsec:MatrixSpaces}

Throughout the present work, we  denote the general linear group of complex $n \times n$ matrices by 
$G = \GL(n,\mycmplx)$, the group of  $n \times n$ unitary matrices by
$K = {\rm U}(n)$, and the group of complex (upper) unitriangular matrices by $T$.
We endow the matrix spaces $G$ and $T$ with the respective \emph{Lebesgue measures}
\begin{align}
\label{eq:LebesgueMeasures}
dg = \prod_{j,k=1,\hdots,n} dg_{jk} \quad \text{and} \quad dt= \prod_{1 \le j < k \le n} dt_{jk} \,,
\end{align}
where $g\in G$ and $t\in T$. Here the measure $dz$ denotes the Lebesgue measure on $\myreal$ if $z$ is a real variable
and the Lebesgue measure $dz = d\re z \, d\im z$ on $\mycmplx$ if $z$ is a complex variable. 
Let us emphasize that $dg$ does not denote integration with respect to the Haar measure on $G$,
which is given by
\begin{align}
\label{eq:HaarMeasures}
d^*g = \frac{dg}{|\det g|^{2n}} \,.
\end{align}
For the unitary group $K$, we always employ the normalized Haar measure on $K$
denoted by $d^*k$, so that $\int_K d^*k = 1$.
Finally, given a matrix $g \in \GL(n,\mycmplx)$, we~write $g^*$ for the Hermitian adjoint. 

By a \emph{density} on a matrix space, we understand a Borel measurable function
which is Lebesgue integrable with respect to the corresponding reference measure.
Note~that, unless otherwise indicated, we do not assume a density to be non-negative.
Sometimes, but not always, we write \emph{signed density} to emphasize this.
When it is important that a density is non-negative, we call it a \emph{non-negative density},
or a \emph{probability density} if it~is additionally normalized.
In contrast to~that, when we speak about \emph{random matrices} or about \emph{ensembles},
we always mean that the matrix space under consideration is endowed with a \emph{probability density}.
\emph{Independence} of random matrices always means statistical independence.

Given a density $f_G$ on $\GL(n,\mycmplx)$, we will frequently consider 
the induced joint densities of the squared singular values and of the eigenvalues,
which are defined on the sets $A := \myreal_+^n$ and $Z := \mycmplx_*^n$, respectively,
where $\myreal_+ := (0,\infty)$ and $\mycmplx_*=\mycmplx\setminus\{0\}$.
The arguments of these densities are denoted by $a = (a_1,\hdots,a_n) \in A$
and $z = (z_1,\hdots,z_n) \in Z$, respectively, 
and they are identified with diagonal $n \times n$ matrices when convenient. 
The reference measure on $A$~and~$Z$ will always be given by the Lebesgue measure 
on $\myreal^n$ restricted to~$A$ and on $\mycmplx^n$ restricted to~$Z$, respectively.
Finally, we always consider versions of the joint densities 
which are invariant with respect to permutations of their arguments,
so that the ordering of the squared singular values and the eigenvalues is irrelevant.

For some of our results, it will be useful to specify the sets of densities
and the linear operators describing the induced densities explicitly;
see also Ref.~\cite{KK:2016a} for more information.
The set of bi-unitarily invariant densities on the matrix space $G$ is denoted by
\begin{equation}
L^{1,K}(G):=\{f_G\in L^1(G) \,|\, f_G(k_1gk_2)=f_G(g)\ \forall\ k_1,k_2\in K\ {\rm and}\ g\in G\}. \label{def-L1G}
\end{equation}
On this space, we consider two operators $\mathcal{I}$ and $\mathcal{T}$ defined by
\begin{equation}
\mathcal{I}f_G(a):=\frac{(C_n^*)^2}{\pi^n \, n!}|\Delta_n(a)|^2f_G(\sqrt{a}) \label{I-def}
\end{equation}
and
\begin{equation}
\mathcal{T}f_G(z) := C_n^*|\Delta_n(z)|^2\left(\prod_{j=1}^n|z_j|^{2(n-j)}\right) \int_{T} f_G(z t) dt, \label{T-def}
\end{equation}
where the integration domain $T$ and  the measure $dt$ are as in Eq.~\eqref{eq:LebesgueMeasures}.
The images of these operators are denoted by 
\begin{align}
\label{imagespaces}
L^{1,{\rm SV}}(A):=\mathcal{I} L^{1,K}(G)
\quad\text{and}\quad
L^{1,{\rm EV}}(Z):=\mathcal{T}L^{1,K}(G),
\end{align}
respectively. The notation of these sets are due to the fact that if we start from a density $f_G \in L^{1,K}(G)$,
$\mathcal{I}f_G$ and $\mathcal{T}f_G$ are the induced joint densities of the squared singular values
and of the eigenvalues, respectively. For this reason, we frequently write
$\FSV$ instead of $\mathcal{I}f_G$ and $\FEV$ instead of $\mathcal{T}f_G$.
In Ref.~\cite{KK:2016a} it was shown that the operators $\mathcal{I}$ and $\mathcal{T}$ are invertible. 
For the reader familiar with \cite{KK:2016a}, let us mention that the operator $\mathcal{I}$ defined here
is the composition $\mathcal{I}_A \mathcal{I}_\Omega$ of the operators $\mathcal{I}_A$ and $\mathcal{I}_\Omega$
in \cite{KK:2016a}. In particular, the map $\mathcal{R} := \mathcal{T}\mathcal{I}^{-1}$
is a bijection called the \emph{SEV transform} in Ref.~\cite{KK:2016a}
and this bijection may be written quite explicitly.
Also, let us mention that the operators $\mathcal{I}$ and $\mathcal{T}$
map probability densities to probability densities.
The same property holds for the inverse operator $\mathcal{I}^{-1}$,
whereas it may fail for the inverse operator $\mathcal{T}^{-1}$;
see \cite[Section~3]{KK:2016a} or Section~4 below for details.

% ****************************************************************************************************

\subsection{Definition of some Transforms}
\label{subsec:Transforms}
Next, let us recall the definition and the basic properties of the \emph{Mellin transform} 
and of the \emph{spherical transform}. \pagebreak[2]

For a function $f \in L^1(\myreal_{+})$, the \emph{Mellin transform} is defined by
\begin{align}
\label{eq:MellinDefinition}
\mellin f(s) := \int_0^\infty f(x) \, x^{s-1} \, dx \,.
\end{align}
It is defined for all~those $s \in \mycmplx$ such that the integral exists (in the Lebesgue sense).
In~particular, if $f \in L^1(\myreal_+)$, the Mellin transform is defined at least on the line $1 + \imath \myreal$, 
and it has the following well-known properties:

\begin{enumerate}
\item[(i)]	\emph{Uniqueness theorem.} 
If $f_1$ and $f_2$ are in $L^1(\myreal_+)$ and their Mellin transforms coincide
on the set $1 + \imath \myreal$, we have $f_1 = f_2$ almost everywhere.
Indeed, there exist explicit \emph{Mellin inversion formulas} 
by which one can recover the original function from its Mellin transform,
see \eg \cite{Titchmarsh} or \cite{KK:2016a}.
 \item[(ii)]	\emph{Multiplication theorem.} If $f_1$ and $f_2$ are in $L^{1}(\myreal_+)$ and 
\begin{align}
\label{eq:MellinConvolution}
(f_1 \circledast f_2)(x) := \int_0^\infty f_1(xy^{-1}) f_2(y) \, \frac{dy}{y}
\end{align}
denotes their multiplicative convolution, then
\begin{align}
\label{eq:MellinMultiplication}
\mellin([f_1 \circledast f_2];s) = \mellin f_1(s) \, \mellin f_2(s)
\end{align}
for all $s \in \mathbb{C}$ such that the Mellin transforms on the right are defined.
This~multiplication theorem follows from a simple calculation. 
\item[(iii)]	\emph{Composition with derivatives.} 
For $f\in L^{1,k}_{\mathbb{I}}(\mathbb{R}_+)$, we have
\begin{align}
\label{eq:mellin-link}
\mellin \Big(\Big[ \Big({-}x \frac{d}{dx} \Big)^k f(x) \Big]; s \Big)= s^k \, \mellin f(s).
\end{align}
\end{enumerate}
In part (iii), we have used the notation $\mellin ([f(x)];s)$ instead of $\mellin f(s)$
in~order to indicate the argument of the underlying function.
The set $L^{1,k}_{\mathbb{I}}(\mathbb{R}_+)$ is~defined by
\begin{multline}
 L^{1,k}_{\mathbb{I}}(\mathbb{R}_+) := \bigg\{f\in L^{1}(\mathbb{R}_+) \,\Big|\, 
 \text{$f$ is $k$-times differentiable and} \\
 \text{for all $\kappa \in\mathbb{I}$ and $j=0,\ldots k$} :
 \int_0^\infty \bigg|y^{\kappa-1}\Big({-}y\frac{\partial}{\partial y}\Big)^jf(y)\bigg|dy<\infty\bigg\}
\label{func-def}
\end{multline}
with $\mathbb{I} \subset \mathbb{R}$ an interval containing the number $1$.
Here, ``$k$-times differentiable'' means $(k-1)$-times continuously differentiable
with an absolutely continuous $(k-1)$'st derivative. As is well known,
this implies that the $k$th derivative exists almost everywhere.
The set $L^{1,k}_{\mathbb{I}}(\mathbb{R}_+)$ will also play a role in the definition 
of the polynomial ensembles of derivative type, see Section~\ref{subsec:PolynomialEnsembles} below.

We now introduce the spherical transform. In doing so, we confine ourselves to a direct definition 
which is sufficient for our purposes and refer to the literature \cite{Helgason3,Terras,FK,JL:SL2R,JL:SL2C}
for background information.
Also, let us emphasize that we define the spherical transform 
for bi-unitarily invariant densities on the group $\GL(n,\mycmplx)$ 
in the present work,
whereas it was defined for unitarily invariant densities
on the~cone $\Pos(n,\mycmplx)$ of positive-definite Hermitian matrices
in Ref.~\cite[Section 2.4]{KK:2016a}. For a function $f_G \in L^{1,K}(G)$, the \emph{spherical transform} is
\begin{equation}
\label{S-def}
\mys f_G(s) := \int_G f_G(g) \, \varphi_s(g) \, \frac{dg}{|\det g|^{2n}},
\end{equation}
where
\begin{equation}
\label{eq:SFDefinition}
\varphi_s(g) = \frac{\Delta_n(\varrho')}{\Delta_n(s)} \, \frac{\det \big[(\lambda_j(g^* g))^{s_{k}+(n-1)/2} \big]_{j,k=1,\hdots,n}} {\Delta_n(\lambda(g^* g))} \,, \qquad s \in \mathbb{C}^n \,,\ g \in G \,,
\end{equation}
denotes the \emph{spherical function} \cite[Theorem IV.5.7]{Helgason3}
for the group $G$. In this~de\-finition, we introduced the vector
 \begin{equation}\label{rho-def}
 \varrho':=(\varrho'_1,\ldots,\varrho'_n) \quad {\rm with} \quad \varrho'_j=(2j+n-1)/2, \quad j=1,\ldots,n,
 \end{equation}
and $\lambda(g^* g) = (\lambda_j(g^* g))_{j=1,\hdots,n}$ is the vector of the eigenvalues of $g^* g$ or, equivalently, the squared singular values of $g$. The spherical functions $\varphi_s$ in Eq.~(2.14) 
are bi-unitarily invariant and satisfy the equation
\begin{equation}
\label{eq:SFCharacterization}
\int_K \varphi_s(g k h) \,  d^* k = \varphi_s(g) \, \varphi_s(h) \qquad \text{for all $g,h \in G$\,,}
\end{equation}
see \eg \cite[Prop. IV.2.2]{Helgason3}. The representation \eqref{eq:SFDefinition} goes back to 
Gelfand and Na\u{\i}mark \cite{GelNai}. The spherical transform $\mathcal{S} f_G(s)$ is defined 
for all those $s \in \mycmplx^n$ such~that the integral exists (in the Lebesgue sense). 
Note that, by the definitions, we have 
\begin{equation}
\label{eq:SFNormalization}
\mathcal{S} f_G(\varrho')=\int_G f_G(g) \, dg
\end{equation}
for $\varrho'$ as in Eq.~\eqref{rho-def}. In particular, $\mathcal{S} f_G(\varrho') = 1$ if $f_G$ is a probability density. Also, if $f \in L^{1,K}(G)$, the spherical transform is defined at least on the set $\varrho' + \imath \myreal^n$, and it satisfies the following well-known properties:
 
\begin{enumerate}
\item[(i)]		
\emph{Uniqueness theorem.} 
If $f_1$ and $f_2$ are in $L^{1,K}(G)$ 
and their spherical transforms coincide on the set $\varrho' + \imath \myreal^n$, 
we have $f_1 = f_2$ almost everywhere,
see e.g.\@ Ref.~\cite[Chapters IV.3 and IV.8]{Helgason3}. 
Furthermore, there exist explicit \emph{spherical inversion formulas}
by which one can recover a function $f \in L^{1,K}(G)$ from its spherical transform,
see e.g.\@ \cite[Lemma 2.9]{KK:2016a} for a version which does not require any smoothness conditions.
\item[(ii)]	\emph{Multiplication theorem.}		
Let $f_{G,1}$ and $f_{G,2}$ be functions in $L^{1,K}(G)$,
and let their multiplicative convolution be defined by
\begin{align}
\label{eq:SphericalConvolution}
(f_{G,1} \circledast f_{G,2})(g)
:= 
\int_G f_{G,1}(g \tilde{g}^{-1}) f_{G,2}(\tilde{g}) \frac{d\tilde{g}}{|\det \tilde{g}|^{2n}} \,.
\end{align}
It is straightforward to see that this is well-defined for almost all $g \in G$
and that $f_{G,1} \circledast f_{G,2}$ is again 
an element of $L^{1,K}(G)$. Then we have
\begin{align}
\label{eq:SphericalMultiplicationNew}
\mathcal{S}(f_{G,1} \circledast f_{G,2})(s) = \mathcal{S}f_{G,1}(s) \mathcal{S}f_{G,2}(s)
\end{align}
for all $s \in \mathbb{C}^n$ such that the spherical transforms on the right are defined.
This follows from a simple calculation using Eq.~\eqref{eq:SFCharacterization} and bi-unitary in\-variance, 
compare \eg \cite[Theorem IV.6.3]{JL:SL2R}.

Equation~\eqref{eq:SphericalMultiplicationNew} has the following probabilistic reformulation,
which will be convenient in the next section.
If $X_1$ and $X_2$ are independent random matrices
with densities $f_{G,1}$ and $f_{G,2}$ in $L^{1,K}(G)$, respectively,
$f_{G_1} \circledast f_{G_2}$ is simply the density of the matrix product $X_1 X_2$.
Thus, writing $\mys_X$ instead of $\mys f_G$ for a random matrix $X$
with a density $f_G \in L^{1,K}(G)$, we may rewrite Eq.~\eqref{eq:SphericalMultiplicationNew}
in the form
\begin{align}
\label{eq:SphericalMultiplicationX}
\mathcal{S}_{X_1 X_2}(s) = \mathcal{S}_{X_1}(s) \mathcal{S}_{X_2}(s) \,.
\end{align}
We also call $\mys_X$ the spherical transform of the random matrix $X$.
\end{enumerate}

% ****************************************************************************************************

\subsection{Polynomial Ensembles}
\label{subsec:PolynomialEnsembles}

\emph{Polynomial ensembles} \cite{KS:2014,KKS:2015,Kuijlaars:2015} 
play a prominent role in the investigation of singular value statistics,
as their subclass of the \emph{poly\-nomial ensembles of derivative type} \cite{KK:2016a}. 
Therefore, let us recall their definitions.

\begin{definition}[Polynomial Ensembles] \ 
\label{def:PE}

\noindent{}Fix $n \in \mynat$, and let $w_0,\hdots,w_{n-1}\in L^{1,0}_{[1,n]}(\mathbb{R}_+)$ and $\omega\in L^{1,n-1}_{[1,n]}(\mathbb{R}_+)$.

\begin{enumerate}[(a)]
\item
A probability density $\FSV^{(n)}[w]\in L^{1,{\rm SV}}(A)$ defines a \emph{polynomial ensemble} 
if it is of the form
\begin{align}
\label{eq:PE-density}
\FSV^{(n)}([w];a)=C_{\rm sv}^{(n)}[w] \, \Delta_n(a) \, \det[w_{j-1}(a_k)]_{j,k=1,\hdots,n}
\end{align}
with the normalizing constant
\begin{align}
\label{eq:PE-normalization}
C_{\rm sv}^{(n)}[w] := \left( n!\det\bigg[\int_0^\infty a^{k-1}w_{j-1}(a)da\bigg]_{j,k=1,\hdots,n} \right)^{-1} \,.
\end{align}
It corresponds to a \emph{polynomial random matrix ensemble} $f_G^{(n)}[w]=\mathcal{I}^{-1}\FSV^{(n)}[w]$ on $G$. These ensembles are also called the poly\-nomial ensembles associated with the weight functions $\{w_j\}_{j=0,\ldots,n-1}$.
\item
Suppose \emph{additionally} that 
\begin{align}
\label{eq:link}
w_k(a)=\left({-} a \frac{d}{da}\right)^k\omega(a) \,, \qquad k = 0,\hdots,n-1,
\end{align}
i.e.\@ we have
\begin{align}
\label{eq:PED-density}
\FSV^{(n)}([\omega];a)=C_{\rm sv}^{(n)}[\omega] \, \Delta_n(a) \, \det\left[\left({-} a_k \frac{d}{da_k}\right)^{j-1} \omega(a_k)\right]_{j,k=1,\hdots,n}
\end{align}
with the normalizing constant
\begin{align}
\label{eq:PED-normalization}
C_{\rm sv}^{(n)}[\omega] := \frac{1}{\prod_{j=0}^{n}j!}\,\frac{1}{\prod_{j=1}^{n}\mathcal{M}\omega(j)} \,.
\end{align}
Then we say that the probability density given by Eq.~\eqref{eq:PED-density} 
defines a \emph{poly\-nomial ensemble of derivative type}. 
The associated density $f_G^{(n)}[\omega]=\mathcal{I}^{-1}\FSV^{(n)}[\omega]$ 
on $G$ is called a \emph{polynomial random matrix ensemble of derivative type}.
These ensembles are also called the poly\-nomial ensembles (of derivative type)
asso\-ciated with the weight function $\omega$. 
For brevity, we often omit the attribute ``of~derivative~type'' here,
as this is unlikely to cause misunderstandings.
\end{enumerate}
\end{definition}

Let us note that the integrability conditions in the Definition \ref{def:PE} 
ensure that the functions $w_0,\ldots,w_{n-1}$ and $\omega$ as well as 
the densities \eqref{eq:PE-density} and \eqref{eq:PED-density} are integrable. 
Furthermore, the Mellin transforms $\mellin w_k$, $k=0,\hdots,n-1$, and $\mathcal{M}\omega$
exist (at least) in the complex strip $[1,n] + \imath \myreal$. 
Also, it is part of the definition that the functions on the right hand side 
of Eqs. \eqref{eq:PE-density} and \eqref{eq:PED-density} are non-negative and normalizable.
It can be checked that the normalizing constants are indeed given 
by Eqs. \eqref{eq:PE-normalization} and \eqref{eq:PED-normalization}, respectively,
see \cite{KK:2016a} or Section~3 below.
Moreover, let us emphasize that polynomial random matrix ensembles
are bi-unitarily invariant by definition.

Occasionally, similarly as in \cite{KK:2016a}, we will need the extension of the previous definitions
to signed densities. By slight abuse of notation, we call the resulting signed measures on $A$ and on $G$
\emph{signed polynomial ensembles}.

Moreover, similarly as in the definition above, we often make explicit 
the underlying weight functions $w = \{ w_j \}_{j=0,\hdots,n-1}$ or $\omega$ in square brackets
when speci\-fying normalization constants, spectral densities, correlation kernels and functions,
\linebreak[2] bi-orthogonal kernels and functions etc.

Finally, let us quote some results from Ref.~\cite{KK:2016a} for polynomial random matrix ensembles of derivative type. 
The first result provides the joint density~\eqref{eq:PED-density} of the eigenvalues.

\begin{theorem}[{\cite[Theorem 3.5]{KK:2016a}}]\
\label{thm:EigenvalueDistribution}

Let $X$ be a random matrix drawn from the polynomial random matrix ensemble 
of derivative type associated with the weight function $\omega\in L^{1,n-1}_{[1,n]}(\mathbb{R}_+)$.
Then the joint density of the eigenvalues of $X$ is given by
\begin{align}
\label{eq:EigenvalueDistribution-11}
\FEV^{(n)}([\omega];z) =\mathcal{R}\FSV^{(n)}([\omega];a)=C_{\rm ev}^{(n)}[\omega] \, |\Delta_n(z)|^2 \, \prod_{j=1}^{n} \omega(|z_j|^2) \,,
\end{align}
where
\begin{align}
\label{eq:CEV}
C_{\rm ev}^{(n)}[\omega] := \frac{C_{\rm sv}^{(n)}[\omega] \prod_{j=0}^{n-1} j! }{\pi^n } \,
\end{align}
and $C_{\rm sv}^{(n)}[\omega]$ is as in Eq.~\eqref{eq:PED-normalization}.
\end{theorem}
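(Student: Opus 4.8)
The plan is to evaluate the two operators in $\FEV^{(n)}[\omega]=\mathcal{R}\FSV^{(n)}[\omega]=\mathcal{T}\,\mathcal{I}^{-1}\FSV^{(n)}[\omega]$ one after the other. Inverting $\mathcal{I}$ is immediate from \eqref{I-def}: the ensemble $f_G:=\mathcal{I}^{-1}\FSV^{(n)}[\omega]$ is the bi-unitarily invariant density on $G$ taking, at a matrix with squared singular values $a=(a_1,\dots,a_n)$, the value
\[
\frac{\pi^{n}\,n!\,C_{\rm sv}^{(n)}[\omega]}{(C_n^*)^{2}}\;\frac{\det\big[({-}a_k\tfrac{d}{da_k})^{\,j-1}\omega(a_k)\big]_{j,k=1,\dots,n}}{\Delta_n(a)},
\]
a well-defined symmetric real-analytic function because the determinant is antisymmetric in the $a_k$ and hence divisible by $\Delta_n(a)$. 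It then remains to compute $\mathcal{T}f_G$, and I would identify it by comparing ``Schur moments'', assuming first that $\mellin\omega$ extends to an entire function --- e.g.\ that $\omega$ has compact support in $(0,\infty)$ --- and recovering the general case by approximation at the end.

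The key algebraic identity is that, for a partition $\mu$ with at most $n$ parts, cancelling Vandermonde factors, applying Andr\'eief's identity, the Mellin rule \eqref{eq:mellin-link} (which turns $\mellin\big[({-}a\tfrac{d}{da})^{j-1}\omega\big]$ into $(\,\cdot\,)^{j-1}\mellin\omega$), extracting $\prod_k\mellin\omega$ from the columns, and evaluating the remaining Vandermonde $\det[(\mu_k+n-k+1)^{j-1}]_{j,k}$ by Weyl's dimension formula, one obtains
\[
\int_{A}\FSV^{(n)}([\omega];a)\,s_\mu(a)\,da
= C_{\rm sv}^{(n)}[\omega]\,n!\,\Big(\textstyle\prod_{i=0}^{n-1}i!\Big)\,s_\mu(1,\dots,1)\prod_{k=1}^{n}\mellin\omega(\mu_k+n-k+1),
\]
where two factors of $(-1)^{\binom{n}{2}}$ (one from $s_\mu(a)=(-1)^{\binom{n}{2}}\det[a_k^{\mu_j+n-j}]_{j,k}/\Delta_n(a)$, one from the Vandermonde evaluation) cancel, leaving a manifestly positive result. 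Since $\FSV^{(n)}[\omega]$ is the density of the squared singular values, the left side equals $\ee\big[s_\mu(\text{squared singular values of }X)\big]$. (Running the same manipulation with the Gelfand--Na\u{\i}mark spherical function \eqref{eq:SFDefinition} in place of $s_\mu$, after the routine reduction of the integral in \eqref{S-def} to a radial one and using \eqref{eq:SFNormalization}, gives $\mathcal{S}f_G(s)=\prod_k\mellin\omega(s_k-\tfrac{n-1}{2})/\mellin\omega(k)$ and re-derives \eqref{eq:PED-normalization}; this is not needed below but makes the spherical transform explicit.)

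To transfer this to eigenvalues I would use that for \emph{any} bi-unitarily invariant random matrix $X$ on $G$,
\[
\ee\big[s_\mu(\mathrm{eig}\,X)\,\overline{s_\nu(\mathrm{eig}\,X)}\big]
=\frac{\delta_{\mu\nu}}{s_\mu(1,\dots,1)}\,\ee\big[s_\mu(\text{squared singular values of }X)\big]\qquad(\mu,\nu\text{ partitions}).
\]
Indeed, write $X\stackrel{d}{=}U_1\Sigma U_2$ with $U_1,U_2$ Haar on $K$ and $\Sigma$ the diagonal singular value matrix; then $s_\mu(\mathrm{eig}\,X)\overline{s_\nu(\mathrm{eig}\,X)}=\chi_{V_\mu\otimes\overline{V_\nu}}(X)$ with $V_\mu$ the irreducible $\GL(n,\mycmplx)$-module of highest weight $\mu$, and the Haar average over $U_1,U_2$ produces the orthogonal projection $P$ onto the $K$-invariants of $V_\mu\otimes\overline{V_\nu}$, which vanish for $\mu\neq\nu$ and are one-dimensional for $\mu=\nu$, with $\tr\big(P\,\rho_{V_\mu\otimes\overline{V_\mu}}(\Sigma)\,P\big)=\chi_{V_\mu}(\Sigma^2)/\dim V_\mu=s_\mu(\text{squared singular values of }X)/s_\mu(1,\dots,1)$. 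Combining this with the previous paragraph, for $X$ from the ensemble one gets $\ee[s_\mu(\mathrm{eig}\,X)\overline{s_\nu(\mathrm{eig}\,X)}]=\delta_{\mu\nu}\,C_{\rm sv}^{(n)}[\omega]\,n!\,(\prod_{i=0}^{n-1}i!)\prod_k\mellin\omega(\mu_k+n-k+1)$, which is precisely $\int_{\mycmplx_*^n}\mathcal{T}f_G(z)\,s_\mu(z)\overline{s_\nu(z)}\,d^2z$ since $\mathcal{T}f_G$ is the eigenvalue density. On the other hand, a direct computation by Andr\'eief's identity and the angular integration over $\mycmplx$ gives $\int_{\mycmplx_*^n}C_{\rm ev}^{(n)}[\omega]|\Delta_n(z)|^2\prod_j\omega(|z_j|^2)\,s_\mu(z)\overline{s_\nu(z)}\,d^2z=\delta_{\mu\nu}\,C_{\rm ev}^{(n)}[\omega]\,n!\,\pi^{n}\prod_k\mellin\omega(\mu_k+n-k+1)$, and the two coincide exactly because of the definition \eqref{eq:CEV} of $C_{\rm ev}^{(n)}[\omega]$ (the case $\mu=\nu=0$ being the normalization of the candidate density). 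Thus $\mathcal{T}f_G$ and the candidate density $C_{\rm ev}^{(n)}[\omega]|\Delta_n(z)|^2\prod_j\omega(|z_j|^2)$ have all Schur moments in common; as both are supported in a bounded region --- the eigenvalues of $X$ being bounded in modulus by its largest singular value, and $\omega$ being compactly supported --- they coincide.

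The main obstacle is the passage to a general weight $\omega\in L^{1,n-1}_{[1,n]}(\myreal_+)$. Because $\mellin\omega$ is only assumed to exist on the strip $[1,n]+\imath\myreal$, whose width $n-1$ is one less than the number of strictly spaced exponents $\mu_k+n-k+1$, the only partition producing a finite Schur moment in general is $\mu=0$, so the moment comparison genuinely needs the extra room afforded by an entire $\mellin\omega$. I would close the gap by approximating $\omega$ in $L^{1,n-1}_{[1,n]}(\myreal_+)$ by functions $\omega_\varepsilon$ with compact support in $(0,\infty)$ --- for which the argument above applies verbatim --- and passing to the limit, using the $L^1$-continuity of $\mathcal{I}^{-1}$ and of $\mathcal{T}$ together with $\FSV^{(n)}[\omega_\varepsilon]\to\FSV^{(n)}[\omega]$ and $C_{\rm ev}^{(n)}[\omega_\varepsilon]|\Delta_n(z)|^2\prod_j\omega_\varepsilon(|z_j|^2)\to C_{\rm ev}^{(n)}[\omega]|\Delta_n(z)|^2\prod_j\omega(|z_j|^2)$ in $L^1$. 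An alternative route, closer to the spirit of the spherical transform, is to evaluate $\int_T\varphi_s(zt)\,dt$ for generic $s$ (a Gindikin--Karpelevich / Harish-Chandra $c$-function type integral) and to read $\mathcal{T}f_G$ directly off the spherical inversion formula together with the factorization of $\mathcal{S}f_G$ from the second paragraph; that $c$-function computation would then be the technical heart. In either route one must keep careful track of the multiplicative constants and of the $(-1)^{\binom{n}{2}}$-type signs stemming from the two conventions for the Vandermonde/bialternant, which cancel in the manifestly non-negative final formula.
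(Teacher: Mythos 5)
This theorem is quoted here from \cite[Theorem 3.5]{KK:2016a} without proof, so there is no in-paper argument to compare against; the reference establishes it within the harmonic-analysis framework that this paper also uses, namely by making the SEV transform $\mathcal{R}=\mathcal{T}\mathcal{I}^{-1}$ explicit and evaluating the triangular integral in \eqref{T-def} for derivative-type weights. Your route is genuinely different: you characterize both $\mathcal{T}\mathcal{I}^{-1}\FSV^{(n)}[\omega]$ and the candidate density through their integrals against $s_\mu(z)\overline{s_\nu(z)}$, using the identity $\ee\big[s_\mu(\mathrm{eig}\,X)\overline{s_\nu(\mathrm{eig}\,X)}\big]=\delta_{\mu\nu}\,\ee\big[s_\mu(\text{sq.\ sing.\ val.})\big]/s_\mu(1,\hdots,1)$ for bi-unitarily invariant $X$. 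That identity is correct (the Haar average over $U_1,U_2$ in $X\stackrel{d}{=}U_1\Sigma U_2$ does produce the rank-$\delta_{\mu\nu}$ projection, and $\langle \mathrm{id},\rho_\mu(\Sigma)\,\mathrm{id}\,\rho_\mu(\Sigma)^*\rangle/\dim V_\mu=s_\mu(\Sigma^2)/s_\mu(1,\hdots,1)$), and both moment computations, including the sign bookkeeping and the matching of constants, reproduce \eqref{eq:CEV} exactly. The payoff of your approach is conceptual transparency about why the eigenvalue side diagonalizes; the cost is that all the analytic difficulty is pushed into the two limiting steps at the end.

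Two points must be shored up before this is a proof. First, ``all Schur moments agree and both measures are compactly supported, hence they coincide'' is not automatic: the linear span of $\{s_\mu(z)\overline{s_\nu(z)}\}$ is only the subalgebra generated by symmetric polynomials in $z$ times symmetric polynomials in $\bar z$, which is a \emph{proper} subspace of the $\mathbb{S}_n$-invariant polynomials in $(z,\bar z)$ (for $n=2$, $|z_1|^2+|z_2|^2$ does not lie in it). It is, however, a self-adjoint unital algebra that separates $\mathbb{S}_n$-orbits (already the $e_k(z)=s_{(1^k)}(z)$ determine the multiset $\{z_j\}$), so Stone--Weierstrass gives uniform density in continuous symmetric functions on compacts and the conclusion stands --- but this has to be argued. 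Second, in the approximation step the truncated weights $\omega_\varepsilon$ will in general \emph{not} yield a non-negative $\FSV^{(n)}[\omega_\varepsilon]$, so ``the argument applies verbatim'' fails as long as it is phrased probabilistically; you must restate steps two through four as linear identities for signed bi-unitarily invariant densities (the Haar-averaging computation survives by Fubini, since everything is bounded for compactly supported $\omega_\varepsilon$). Granting that, the continuity you invoke is available --- $\mathcal{I}$ is an $L^1$-isometry and $\mathcal{T}$ an $L^1$-contraction --- and $\FSV^{(n)}[\omega_\varepsilon]\to\FSV^{(n)}[\omega]$ in $L^1(A)$ reduces, after expanding $\Delta_n(a)\det[w_{j-1}(a_k)]$, to convergence of $a^m({-}a\,d/da)^j\omega_\varepsilon$ in $L^1(\myreal_+)$ for $0\le m,j\le n-1$, which a smooth cutoff achieves. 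With these two gaps closed, the proposal is a valid alternative proof.
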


The second result shows that the joint densities of the squared singular values and of the eigenvalues
give rise to determinantal point processes.

\begin{lemma}[{\cite[Lemmas 4.1 \& 4.2]{KK:2016a}}]\
\label{lem:Kernels}

Let $X$ be a random matrix as in Theorem~\ref{thm:EigenvalueDistribution} with the additional condition that $\omega\in L^{1,n}_{]s_{\min},s_{\max}[}(\mathbb{R}_+)$ and $[1,n+1]\subset {]s_{\min},s_{\max}[}\subset\mathbb{R}$. Then the joint densities of the squared singular values and eigenvalues of $X$ give rise to the determinantal point processes
\begin{equation}\label{deter-sv}
 \FSV^{(n)}([\omega];a)=\frac{1}{n!}\det\Big[K_{\rm sv}^{(n)}([\omega];a_b,a_c)\Big]_{b,c=1,\ldots,n},
\end{equation}
and
\begin{equation}\label{deter-ev}
 \FEV^{(n)}([\omega];z)=\frac{1}{n!}\det\Big[K_{\rm ev}^{(n)}([\omega];z_b,\bar{z}_c)\Big]_{b,c=1,\ldots,n},
\end{equation}
respectively.
The kernels are given by
\begin{eqnarray}\label{ker-sv-a}
K_{\rm sv}^{(n)}([\omega];a_b,a_c)&=&\sum_{j=0}^{n-1}p_j([\omega];a_b)q_j([\omega];a_c)\\
&=&-n\frac{\mathcal{M}\omega(n+1)}{\mathcal{M}\omega(n)}\int_0^1p_{n-1}([\omega];xa_b)q_n([\omega];xa_c)dx \nonumber
\end{eqnarray}
and
\begin{equation}\label{ker-ev}
K_{\rm ev}^{(n)}([\omega];z_b,\bar{z}_c)=\sqrt{\omega(|z_b|^2)\omega(|z_c|^2)}\sum_{j=0}^{n-1}\frac{(z_b\bar{z}_c)^j}{\pi\mathcal{M}\omega(j+1)} \,.
\end{equation}
For the kernel of the squared singular values of $X$ we have the polynomials in monic normalization
\begin{eqnarray}\label{pol-sv}
p_{l}([\omega];a)&=&\sum_{j=0}^l(-1)^{l-j}\frac{l!\mathcal{M}\omega(l+1)}{j!(l-j)!\mathcal{M}\omega(j+1)}a^j\\
&=&l! \mathcal{M}\omega(l+1)\oint_{\mathcal{C}}\frac{\Gamma(t-l-1)}{\Gamma(t)\mathcal{M}\omega(t)}a^{t-1}\frac{dt}{2\pi\imath },\nonumber
\end{eqnarray}
 $l=0,\ldots,n-1$. The closed contour $ \mathcal{C}$ encircles the interval $[1,n]$ and satisfies ${\rm Re}\, \mathcal{C}\subset {]s_{\min},s_{\max}[}$. Moreover it excludes any poles of $1/\mathcal{M}\omega(t+1)$. The functions
\begin{eqnarray}\label{func-sv}
q_{l}([\omega];a)
&=&\frac{1}{l!\mathcal{M}\omega(l+1)}\partial_a^l\left[(-a)^l\omega(a)\right]\\
&=&\frac{1}{l!\mathcal{M}\omega(l+1)}\lim_{\epsilon\to0}\int_{-\infty}^\infty\frac{\pi^2\cos(\epsilon s)}{\pi^2-4\epsilon^2 s^2}\frac{\Gamma(s_0+\imath  s)\mathcal{M}\omega(s_0+\imath  s)}{\Gamma(s_0+\imath  s-l)}
a^{-s_0-\imath  s}\frac{ds}{2\pi}\nonumber
\end{eqnarray}
 are bi-orthogonal to the polynomials~\eqref{pol-sv} such that $\int_0^\infty p_{l}([\omega];a)q_{m}([\omega];a)$ $da=\delta_{lm}$ for $l,m=0,\ldots,n$ and $\delta_{lm}$ the Kronecker symbol. The auxiliary real shift $s_0$ is chosen such that $s_0\in {]s_{\min},1[}$ and $s_0<{\rm Re}\, t$ for all $t\in\mathcal{C}$.
\end{lemma}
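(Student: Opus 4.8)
The plan is to prove the two assertions separately; in each case I would recognize a (bi)orthogonal ensemble and then invoke the standard determinantal-ensemble formalism of Borodin~\cite{Borodin}. \emph{Squared singular values.} Since $\Delta_n(a)=\det[a_k^{j-1}]_{j,k=1,\ldots,n}$, the density \eqref{eq:PED-density} already has the biorthogonal form $C_{\rm sv}^{(n)}[\omega]\,\det[a_k^{j-1}]_{j,k}\,\det[\psi_j(a_k)]_{j,k}$ with $\psi_j(a):=({-}a\,d/da)^{j-1}\omega(a)$. First I would take $p_l$ and $q_l$ as given by \eqref{pol-sv} and \eqref{func-sv} and verify that $p_l$ is a monic polynomial of degree $l$, and, using the operator identity $\partial_a^l(a^l f)=\prod_{m=1}^l(a\,\partial_a+m)f$, that $q_l$ lies in the linear span of $\psi_1,\ldots,\psi_{l+1}$ with coefficient $1/(l!\,\mathcal{M}\omega(l+1))$ in front of $\psi_{l+1}$.

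Next I would establish the biorthogonality $\int_0^\infty p_l(a)\,q_m(a)\,da=\delta_{lm}$ for $l,m=0,\ldots,n$. From property~(iii) of the Mellin transform together with $\mathcal{M}[\partial_a^l h](s)=(-1)^l\frac{\Gamma(s)}{\Gamma(s-l)}\,\mathcal{M}h(s-l)$ one obtains $\mathcal{M}q_l(s)=\frac{\Gamma(s)}{l!\,\Gamma(s-l)}\,\frac{\mathcal{M}\omega(s)}{\mathcal{M}\omega(l+1)}$, so that $\int_0^\infty a^{j}q_l(a)\,da=\mathcal{M}q_l(j+1)$ equals $\binom{j}{l}\,\mathcal{M}\omega(j+1)/\mathcal{M}\omega(l+1)$ for $j\ge l$ and vanishes for $0\le j<l$; inserting the explicit polynomial $p_l$ and collapsing the binomial sum $\sum_i\binom{l-m}{i}(-1)^{l-m-i}=(1-1)^{l-m}$ then yields $\delta_{lm}$. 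This is exactly where the hypotheses enter: differentiability of $\omega$ up to order $n$ makes $q_n$ meaningful, the inclusion $[1,n+1]\subset{]s_{\min},s_{\max}[}$ makes $\mathcal{M}\omega(1),\ldots,\mathcal{M}\omega(n+1)$ finite and kills the boundary terms in the integrations by parts, and finiteness of $C_{\rm sv}^{(n)}[\omega]$ forces $\mathcal{M}\omega(j)\neq0$. I would then rewrite the density in biorthogonalized form: replacing $a_k^{j-1}$ by the monic $p_{j-1}(a_k)$ leaves the first determinant unchanged, while the triangular change of basis from the $\psi_j$ to the $q_{j-1}$ multiplies the second by $\prod_{l=0}^{n-1}l!\,\mathcal{M}\omega(l+1)$; since $C_{\rm sv}^{(n)}[\omega]\cdot\prod_{l=0}^{n-1}l!\,\mathcal{M}\omega(l+1)=1/n!$, one obtains $\FSV^{(n)}([\omega];a)=\tfrac{1}{n!}\det[p_{j-1}(a_k)]_{j,k}\det[q_{j-1}(a_k)]_{j,k}=\tfrac{1}{n!}\det[K_{\rm sv}^{(n)}([\omega];a_b,a_c)]_{b,c}$ with $K_{\rm sv}^{(n)}([\omega];a,a')=\sum_{j=0}^{n-1}p_j([\omega];a)\,q_j([\omega];a')$. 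Biorthogonality makes $K_{\rm sv}^{(n)}$ a reproducing kernel ($\int_0^\infty K_{\rm sv}^{(n)}(a,y)K_{\rm sv}^{(n)}(y,a')\,dy=K_{\rm sv}^{(n)}(a,a')$, $\int_0^\infty K_{\rm sv}^{(n)}(a,a)\,da=n$), so \eqref{deter-sv} holds with the first line of \eqref{ker-sv-a} and the process is determinantal; the contour representations in \eqref{pol-sv} and \eqref{func-sv} follow by evaluating $p_l$ as the sum of the residues of $l!\,\mathcal{M}\omega(l+1)\,\Gamma(t-l-1)/(\Gamma(t)\,\mathcal{M}\omega(t))$ at $t=1,\ldots,l+1$, and $q_l$ by Mellin inversion of $\mathcal{M}q_l$, which is what dictates the conditions on the contour $\mathcal{C}$ and the shift $s_0$.

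The hard part will be the second line of \eqref{ker-sv-a}, the Christoffel--Darboux-type single-integral representation of the kernel. My plan there is to substitute the contour representation of $p_{n-1}([\omega];xa_b)$ and a Mellin--Barnes representation of $q_n([\omega];xa_c)$ into the product, interchange the $x$-integral with the contour integrals, and use $\int_0^1 x^{t-s-1}\,dx=1/(t-s)$ (the analogue of the factor $1/(x-y)$ in the classical Christoffel--Darboux formula), then close one contour and collect residues; the telescoping structure of the factors $\Gamma(t-l-1)$ turns the single term indexed by $n$ back into $\sum_{j=0}^{n-1}p_j(a_b)\,q_j(a_c)$, while the prefactor $-n\,\mathcal{M}\omega(n+1)/\mathcal{M}\omega(n)$ comes out of the ratio of the normalizations of $p_{n-1}$ and $q_n$ against those of $p_n$. (Alternatively, one may check that the two sides have the same double Mellin transform in $(a_b,a_c)$.) Apart from this telescoping step the computation is lengthy but routine Gamma-function bookkeeping.

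Finally, for the eigenvalues I would start from Theorem~\ref{thm:EigenvalueDistribution}, which gives $\FEV^{(n)}([\omega];z)=C_{\rm ev}^{(n)}[\omega]\,|\Delta_n(z)|^2\prod_{j=1}^n\omega(|z_j|^2)$, and rewrite this as $C_{\rm ev}^{(n)}[\omega]\det[f_j(z_b)]_{j,b}\,\det[\overline{f_j(z_c)}]_{j,c}$ with $f_j(z):=z^{j-1}\sqrt{\omega(|z|^2)}$. In polar coordinates $z=r\mathrm{e}^{\imath\theta}$ the angular integration shows that the $f_j$ are mutually orthogonal on $\mycmplx$ with respect to Lebesgue measure, and the radial integration gives $\int_{\mycmplx}f_j(z)\,\overline{f_k(z)}\,dz=\pi\,\mathcal{M}\omega(j)\,\delta_{jk}$. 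Hence $\FEV^{(n)}([\omega];\cdot)$ is an orthogonal-polynomial ensemble; normalizability forces $C_{\rm ev}^{(n)}[\omega]=(n!\,\pi^n\prod_{j=1}^n\mathcal{M}\omega(j))^{-1}$, in agreement with \eqref{eq:CEV}, and the same standard lemma yields \eqref{deter-ev} with $K_{\rm ev}^{(n)}([\omega];z,\bar w)=\sum_{j=1}^n f_j(z)\,\overline{f_j(w)}/(\pi\,\mathcal{M}\omega(j))$, i.e., \eqref{ker-ev}.
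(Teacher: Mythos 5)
This lemma is not proved in the present paper at all --- it is imported verbatim from Lemmas~4.1 and 4.2 of Ref.~\cite{KK:2016a} --- so there is no in-paper argument to compare against; your reconstruction has to be judged on its own. It is essentially correct and is the standard Borodin-type biorthogonal-ensemble argument that underlies the cited lemmas. The key computations all check out: $\mellin q_l(s)=\Gamma(s)\mellin\omega(s)/\bigl(l!\,\Gamma(s-l)\,\mellin\omega(l+1)\bigr)$, hence $\int_0^\infty a^{j}q_l(a)\,da=\binom{j}{l}\mellin\omega(j+1)/\mellin\omega(l+1)$ with vanishing for $j<l$, the binomial collapse giving $\delta_{lm}$, the normalization identity $C_{\rm sv}^{(n)}[\omega]\prod_{l=0}^{n-1}l!\,\mellin\omega(l+1)=1/n!$, the residue evaluation recovering the explicit polynomial from the contour integral, and the polar-coordinate orthogonalization yielding \eqref{ker-ev} together with the value of $C_{\rm ev}^{(n)}[\omega]$.

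The one place where you offer a plan rather than a proof is the second line of \eqref{ker-sv-a}, but the plan closes without difficulty. Inserting the contour representation of $p_{n-1}$ and the Mellin--Barnes representation of $q_n$, the prefactors combine to $-1$, and $\int_0^1x^{t-u-1}dx=1/(t-u)$ is legitimate since $s_0<{\rm Re}\,t$ on $\mathcal{C}$. The telescoping identity you anticipate is
\begin{equation*}
\sum_{j=0}^{n-1}\frac{\Gamma(t-j-1)}{\Gamma(u-j)}
=\frac{1}{u-t}\left[\frac{\Gamma(t-n)}{\Gamma(u-n)}-\frac{\Gamma(t)}{\Gamma(u)}\right],
\end{equation*}
which follows from $\Gamma(t-j-1)/\Gamma(u-j)=(F_{j+1}-F_j)/(u-t)$ with $F_j=\Gamma(t-j)/\Gamma(u-j)$; the first bracketed term reproduces the single-integral form, and the second contributes $\oint_{\mathcal{C}}a^{t-1}\,dt/\bigl(\mellin\omega(t)(u-t)\bigr)=0$ because the line ${\rm Re}\,u=s_0$ lies entirely outside $\mathcal{C}$ and $\mathcal{C}$ avoids the singularities of $1/\mellin\omega$. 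Two points you should make explicit in a write-up: the boundary terms in the $l$-fold integration by parts behind $\mellin\bigl[\partial_a^l((-a)^l\omega)\bigr](s)=\Gamma(s)\mellin\omega(s)/\Gamma(s-l)$ must be shown to vanish, which is exactly where $\omega\in L^{1,n}_{]s_{\min},s_{\max}[}(\myreal_+)$ with $[1,n+1]\subset{]s_{\min},s_{\max}[}$ enters (biorthogonality is asserted up to $l,m=n$, so $n$ derivatives and $\mellin\omega(n+1)$ are genuinely needed); and the Mellin inversion producing the second line of \eqref{func-sv} requires the stated $\epsilon$-regularization, since $\mellin q_l(s_0+\imath s)$ need not be absolutely integrable in $s$.
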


After these preparations, we are ready to formulate our main results.

% ****************************************************************************************************

\section{Main Results}
\label{sec:MainResults}

To shorten the notation, let us convene that in the proofs in this section, matrices inside determinants 
are indexed by $j,k = 1,\hdots,n$. Also, recall the notation $\mys_X$ introduced 
above Eq.~\eqref{eq:SphericalMultiplicationX}.

The derivation of the joint densities of the eigenvalues and the singular values 
of a product of independent random matrices from polynomial ensembles 
works via the spherical transform \eqref{S-def} 
and the multiplication and uniqueness theorems from Section \ref{subsec:Transforms}.
To this end, we first need to know the spherical transform of a~poly\-nomial ensemble.

\begin{proposition}[Spherical Transform of a Polynomial Ensemble]\ 
\label{prop:SphericalTransform}

Let $X$ be a random matrix from the polynomial random matrix ensemble 
associated with the weight functions $\{w_j\}_{j=0,\ldots,n-1}$.
Then the spherical transform of $X$ is given by
\begin{align}
\label{eq:SphericalTransform-11}
\mys_X (s) = C_{\rm sv}^{(n)}[w]\Big( \prod_{j=0}^{n} j! \Big) \frac{\det[\mellin w_{j-1}(s_k-(n-1)/2)]_{j,k=1,\hdots,n}}{\Delta_n(s)} \,.
\end{align}
% for any $s \in [(n+1)/2,(3n-1)/2]^n + \imath \mathbb{R}^n$.
\end{proposition}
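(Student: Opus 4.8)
The plan is to reduce the matrix integral \eqref{S-def} to an integral over the squared singular values and then to evaluate the resulting multiple integral with the Andr\'eief (Heine) identity. The starting point is the observation that, for a bi-unitarily invariant integrable function $h$ on $G$ and any $f_G \in L^{1,K}(G)$, one has
\begin{align*}
\int_G f_G(g)\,h(g)\,dg \;=\; \int_A (\mathcal{I}f_G)(a)\;h(\sqrt a)\,da \,, \qquad \sqrt a := \diag(\sqrt{a_1},\ldots,\sqrt{a_n}) \,,
\end{align*}
which is nothing but the probabilistic reading of the definition \eqref{I-def} of $\mathcal{I}$, valid because such an $h(g)$ depends only on $\lambda(g^*g)$. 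Applying this with $h(g) = \varphi_s(g)/|\det g|^{2n}$ --- which is bi-unitarily invariant because $\varphi_s$ and $|\det\,\cdot\,|$ are --- and using $|\det\sqrt a|^{2n} = \prod_j a_j^{n}$, I get $\mys_X(s) = \int_A \FSV^{(n)}([w];a)\,\varphi_s(\sqrt a)\,\big(\prod_j a_j^{n}\big)^{-1}\,da$.

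Next I would insert the explicit forms. Since $\lambda\big((\sqrt a)^{*}\sqrt a\big)=a$, the Gelfand--Na\u{\i}mark representation \eqref{eq:SFDefinition} gives $\varphi_s(\sqrt a) = \tfrac{\Delta_n(\varrho')}{\Delta_n(s)}\,\Delta_n(a)^{-1}\det[a_j^{\,s_k+(n-1)/2}]$, and dividing the $j$-th row by $a_j^{n}$ turns $\varphi_s(\sqrt a)\big(\prod_j a_j^n\big)^{-1}$ into $\tfrac{\Delta_n(\varrho')}{\Delta_n(s)}\,\Delta_n(a)^{-1}\det[a_j^{\,s_k-(n+1)/2}]$. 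Substituting this together with the polynomial-ensemble density \eqref{eq:PE-density}, the two factors $\Delta_n(a)$ cancel and, transposing one determinant, I am left with
\begin{align*}
\mys_X(s) = C_{\rm sv}^{(n)}[w]\,\frac{\Delta_n(\varrho')}{\Delta_n(s)}\int_A \det[w_{j-1}(a_k)]\;\det\big[a_k^{\,s_j-(n+1)/2}\big]\,da \,.
\end{align*}
By the Andr\'eief identity this integral equals $n!\,\det\big[\int_0^\infty w_{j-1}(a)\,a^{\,s_k-(n+1)/2}\,da\big] = n!\,\det[\mellin w_{j-1}(s_k-(n-1)/2)]$, by the definition \eqref{eq:MellinDefinition} of the Mellin transform; note that the difference between the exponent $s_k+(n-1)/2$ in \eqref{eq:SFDefinition} and the Mellin argument $s_k-(n-1)/2$ equals $n$, which is exactly the power carried by $\prod_j a_j^{-n}=|\det g|^{-2n}$ through the reduction. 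Finally, from $\varrho'_k-\varrho'_j=k-j$ one finds $\Delta_n(\varrho')=\prod_{j=0}^{n-1}j!$, hence $n!\,\Delta_n(\varrho')=\prod_{j=0}^{n}j!$, and \eqref{eq:SphericalTransform-11} follows.

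The routine parts are the two changes of variable and the Andr\'eief step; the point that needs care is the radial reduction, i.e.\ being certain that $\mathcal{I}$ is the correct bookkeeping device once the Lebesgue measure $dg$ is replaced by the Haar-type measure $dg/|\det g|^{2n}$, so that the extra factor $\prod_j a_j^{-n}$ is propagated correctly. One must also watch convergence: the hypotheses $w_j\in L^{1,0}_{[1,n]}(\myreal_+)$ ensure that $\mellin w_{j-1}(s_k-(n-1)/2)$ exists whenever $\re(s_k)-(n-1)/2\in[1,n]$ --- in particular on $\varrho'+\imath\myreal^n$, where $\re(s_k)-(n-1)/2=k$ --- and that Fubini's theorem justifies the Andr\'eief computation there; the identity then extends to the full natural domain of $\mys_X$ by analytic continuation in $s$.
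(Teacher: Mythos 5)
Your proof is correct and follows essentially the same route as the paper's: reduce the integral to the squared singular value density via $\mathcal{I}$, insert the Gelfand--Na\u{\i}mark representation \eqref{eq:SFDefinition}, cancel the Vandermonde determinants, apply Andr\'eief's identity, and identify the Mellin transforms. (Only your parenthetical sanity check is off by one: the gap between the exponent $s_k+(n-1)/2$ and the Mellin \emph{argument} $s_k-(n-1)/2$ is $n-1$, not $n$; the missing unit is the $a^{-1}$ built into the convention $\mellin f(s)=\int_0^\infty f(a)\,a^{s-1}\,da$ --- the exponents you actually display are all correct.)
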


Note that for $s = \varrho'$ as in \eqref{rho-def}, we have $\mys_X(\varrho') = 1$ and $\Delta_n(\varrho') = \prod_{j=0}^{n-1} j!$, so~that we recover Eq.~\eqref{eq:PE-normalization} for the normalizing constant $C_{\rm sv}^{(n)}[w]$.

\begin{proof}
Let $\FSV$ be the density of the squared singular values of $X$, see Eq.~\eqref{eq:PE-density}, 
and let $f_G := \mathcal{I}^{-1} \FSV$ with $\mathcal{I}$ as in Eq.~\eqref{I-def}
be the corresponding matrix density.
Then, using Eqs. \eqref{S-def}, \eqref{eq:PE-density} and \eqref{eq:SFDefinition},
we have 
\begin{multline}
\mys_X(s) 
= \int_G f_G(g) \varphi_s(g) \, \frac{dg}{|\det g|^{2n}} 
= \int_A \FSV(a) \, \varphi_s(\sqrt{a}) \, \frac{da}{(\det a)^n} \\
=C_{\rm sv}^{(n)}[w] \left(\prod_{j=0}^{n-1}j!\right)\int_A\Delta_n(a) \, 
	\det[w_{j-1}(a_k)]\frac{\det[a_k^{s_j-(n-1)/2}]}{\Delta_n(s)\Delta_n(a)}\frac{da}{\det a}\,.
\label{eq:spherical-transform-2}
\end{multline}
The Vandermonde determinant $\Delta_n(a)$ cancels with the one in the numerator and the factor $1/\Delta_n(s)$ can be pulled out 
of the integral. For the remaining integral, we apply Andr\'eief's identity~\cite{Andreief} and end up with
\begin{equation}
\mys_X(s) =\frac{C_{\rm sv}^{(n)}[w]\prod_{j=0}^{n} j! }{\Delta_n(s)} 
	\det\left[\int_0^\infty a^{s_k-(n-1)/2} \, w_{j-1}(a) \, \frac{da}{a}\right]
\end{equation}
The integral in the determinant is the Mellin transform~\eqref{eq:MellinDefinition}, which completes the~proof.
\end{proof}

\begin{corollary}[Spherical Transform of a Polynomial Ensemble \mbox{of Derivative Type}]
\label{cor:SphericalTransform}

Let $X$ be a random matrix drawn from the polynomial random matrix ensemble
associated with the weight function $\omega\in L^{1,n-1}_{[1,n]}(\mathbb{R}_+)$. Then
\begin{align}
\label{eq:SphericalTransform-12}
\mys_X (s) = \prod_{k=1}^{n} \frac{\mellin \omega(s_k-(n-1)/2)}{\mellin \omega(k)} \,.
\end{align}
% for any $s \in [(n+1)/2,(3n-1)/2]^n + \imath \mathbb{R}^n$.
\end{corollary}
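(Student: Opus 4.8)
The plan is to derive Eq.~\eqref{eq:SphericalTransform-12} directly from Proposition~\ref{prop:SphericalTransform} by specializing the weight functions to $w_{j-1} = (-a\,d/da)^{j-1}\omega$ and simplifying the resulting determinant. First I would substitute the relation \eqref{eq:link} into formula \eqref{eq:SphericalTransform-11}, using the property \eqref{eq:mellin-link} of the Mellin transform under composition with $-a\,d/da$. This gives $\mellin w_{j-1}(s) = s^{j-1}\,\mellin\omega(s)$, valid on the strip $[1,n]+\imath\myreal$ because $\omega\in L^{1,n-1}_{[1,n]}(\myreal_+)$. Hence the $(j,k)$-entry of the determinant in \eqref{eq:SphericalTransform-11} equals $\bigl(s_k-(n-1)/2\bigr)^{j-1}\,\mellin\omega\bigl(s_k-(n-1)/2\bigr)$.

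The key observation is then that this determinant factorizes: the factor $\mellin\omega(s_k-(n-1)/2)$ depends only on the column index $k$, so it can be pulled out column by column, leaving a pure Vandermonde determinant $\det\bigl[(s_k-(n-1)/2)^{j-1}\bigr]_{j,k}$. A short computation (shifting all arguments by the same constant leaves Vandermonde differences unchanged) shows this equals $\Delta_n\bigl(s-(n-1)/2\,(1,\dots,1)\bigr) = \Delta_n(s)$. Therefore
\begin{equation*}
\mys_X(s) = C_{\rm sv}^{(n)}[\omega]\Bigl(\prod_{j=0}^{n} j!\Bigr)\,\prod_{k=1}^{n}\mellin\omega\bigl(s_k-(n-1)/2\bigr),
\end{equation*}
and the $\Delta_n(s)$ in the denominator of \eqref{eq:SphericalTransform-11} cancels. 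It remains only to insert the explicit value of the normalization constant $C_{\rm sv}^{(n)}[\omega]$ from Eq.~\eqref{eq:PED-normalization}, namely $C_{\rm sv}^{(n)}[\omega] = \bigl(\prod_{j=0}^{n}j!\bigr)^{-1}\bigl(\prod_{j=1}^{n}\mellin\omega(j)\bigr)^{-1}$. The factor $\prod_{j=0}^n j!$ then cancels, and the remaining $\prod_{j=1}^n 1/\mellin\omega(j)$ combines with the numerator product to yield exactly $\prod_{k=1}^{n}\mellin\omega(s_k-(n-1)/2)/\mellin\omega(k)$, which is \eqref{eq:SphericalTransform-12}.

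I do not expect any serious obstacle here; this corollary is essentially a computation. The one point requiring a little care is the justification that \eqref{eq:mellin-link} applies, i.e.\ that $\omega$ lies in the right smoothness/integrability class so that each $\mellin w_{j-1}$ is genuinely $s^{j-1}\mellin\omega(s)$ on the relevant strip — but this is exactly what the hypothesis $\omega\in L^{1,n-1}_{[1,n]}(\myreal_+)$ and the definition \eqref{func-def} guarantee, so it is immediate. Alternatively, one could bypass Proposition~\ref{prop:SphericalTransform} and argue more conceptually: the product formula for $\mys_X$ reflects the fact (to be used repeatedly in the sequel) that a polynomial ensemble of derivative type behaves, under the spherical transform, like a one-dimensional object governed by $\mellin\omega$; but the determinantal computation above is the cleanest route and is the one I would write out.
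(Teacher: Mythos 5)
Your proposal is correct and follows essentially the same route as the paper: apply Proposition~\ref{prop:SphericalTransform}, use Eq.~\eqref{eq:mellin-link} to write $\mellin w_{j-1}(s)=s^{j-1}\mellin\omega(s)$, factor the column-dependent Mellin transforms out of the determinant, cancel the resulting Vandermonde against $\Delta_n(s)$ via translation invariance, and insert the normalization constant \eqref{eq:PED-normalization}. The only cosmetic difference is that the paper obtains that constant by evaluating at $s=\varrho'$ and using $\mys_X(\varrho')=1$, whereas you substitute its stated value directly; both are valid.
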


\begin{proof}
By Assumption \eqref{eq:link} and Eq.~\eqref{eq:mellin-link}, we have 
$\mellin w_{j-1}(s) = s^{j-1}\,\mellin \omega(s)$, $j=1,\hdots,n$.
It therefore follows from Proposition~\ref{prop:SphericalTransform}
via elementary column trans\-formations that
\begin{align}
\label{eq:spherical-transform-6}
   \mys_X (s) 
&= C_{\rm sv}^{(n)}[\omega] \, \Big( \prod_{j=0}^{n} j! \Big) \frac{\det((s_k-(n-1)/2)^{j-1})_{j,k=1,\hdots,n}}{\Delta_n(s)}
	\prod_{k=1}^{n} \mellin \omega\left(s_k-\frac{n-1}{2}\right) \,.
\end{align}
Here the fraction cancels out by the translation-invariance
$\Delta_n(x_1+x,\hdots,x_n+x) = \Delta_n(x_1,\hdots,x_n)$ of the Vandermonde determinant.
After that, setting $s = \varrho'$ and using that $\mys_X(\varrho') = 1$,
we see that the normalization constant $C_{\rm sv}^{(n)}[\omega]$ is given by
Eq.~\eqref{eq:PED-normalization}.
Inserting this into Eq.~\eqref{eq:spherical-transform-6} completes the proof.
\end{proof}

With the help of Proposition~\ref{prop:SphericalTransform} and Corollary~\ref{cor:SphericalTransform},
we can now readily derive the following \emph{transfer formula} for the joint density of the singular values 
when a random matrix from a polynomial ensemble is multiplied by an independent random matrix 
from a polynomial ensemble of derivative type. This generalizes recent results by Kuijlaars et al.
\cite{KS:2014,KKS:2015,Kuijlaars:2015,CKW:2015}, where only products with induced Laguerre (chiral Gaussian) en\-sembles
or induced Jacobi (truncated unitary) en\-sembles were considered.

\begin{theorem}[Transfer for Polynomial Ensembles]\
\label{thm:Transfer}

Let $X_1$ and $X_2$ be independent random matrices
from polynomial random matrix ensembles associated with the weight functions
$\omega\in L^{1,n-1}_{[1,n]}(\mathbb{R}_+)$ and $w := \{w_j\}_{j=0,\ldots,n-1}$, respectively. 
Then the matrix product $X_1 X_2$ belongs to the polynomial random matrix ensemble 
associated with the functions $\omega \circledast w := \{\omega \circledast w_j\}_{j=0,\ldots,n-1}$, 
where we employ the multiplicative convolution~\eqref{eq:MellinConvolution}. 
In particular, the joint density of the squared singular values of $X_1 X_2$
is given by
\begin{align}
\label{trans-sv}
\FSV^{(n)}([\omega \circledast w];a)=C_{\rm sv}^{(n)}[\omega \circledast w] \, \Delta_n(a) \, \det[(\omega \circledast w_{j-1})(a_k)]_{j,k=1,\hdots,n}\,.
\end{align}
\end{theorem}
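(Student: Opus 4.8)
The plan is to evaluate the spherical transform of the product $X_1X_2$ via the multiplication theorem \eqref{eq:SphericalMultiplicationX}, to recognise the outcome as the spherical transform \eqref{eq:SphericalTransform-11} of a polynomial ensemble with weight family $\omega\circledast w:=\{\omega\circledast w_j\}_{j=0,\dots,n-1}$, and then to invoke the uniqueness theorem for the spherical transform. Positivity and normalizability of the resulting expression \eqref{trans-sv} are not addressed directly; they will drop out at the end, because the expression is forced to coincide, almost everywhere, with the genuine squared singular value density of the random matrix $X_1X_2$.

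As a preliminary step I would check that $\omega\circledast w_j\in L^{1,0}_{[1,n]}(\myreal_+)$ for every $j$. By Tonelli's theorem, for $\kappa\in[1,n]$,
\[
\int_0^\infty y^{\kappa-1}\,\bigl|(\omega\circledast w_j)(y)\bigr|\,dy
\;\le\;
\Bigl(\int_0^\infty u^{\kappa-1}|\omega(u)|\,du\Bigr)\Bigl(\int_0^\infty y^{\kappa-1}|w_j(y)|\,dy\Bigr)<\infty ,
\]
where finiteness of the two factors follows from $\omega\in L^{1,n-1}_{[1,n]}(\myreal_+)$ and $w_j\in L^{1,0}_{[1,n]}(\myreal_+)$; the case $\kappa=1$ also shows the convolution is finite almost everywhere. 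Consequently the Mellin transforms $\mellin(\omega\circledast w_j)$ exist on $[1,n]+\imath\myreal$, and the Mellin multiplication theorem \eqref{eq:MellinMultiplication} gives $\mellin(\omega\circledast w_j)(s)=\mellin\omega(s)\,\mellin w_j(s)$ there.

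Next, let $f_1:=\mathcal{I}^{-1}\FSV^{(n)}[\omega]$ and $f_2:=\mathcal{I}^{-1}\FSV^{(n)}[w]$ be the bi-unitarily invariant matrix densities of $X_1$ and $X_2$; then $f_1\circledast f_2\in L^{1,K}(G)$ is the density of $X_1X_2$ and, on $\varrho'+\imath\myreal^n$, Eq.~\eqref{eq:SphericalMultiplicationX} gives $\mys_{X_1X_2}(s)=\mys_{X_1}(s)\,\mys_{X_2}(s)$. I would substitute Corollary~\ref{cor:SphericalTransform} for $\mys_{X_1}$ and Proposition~\ref{prop:SphericalTransform} for $\mys_{X_2}$, note that $s_k-(n-1)/2\in[1,n]+\imath\myreal$ for $s\in\varrho'+\imath\myreal^n$, absorb the scalar $\mellin\omega(s_k-(n-1)/2)$ into the $k$-th column of the determinant, and apply the Mellin multiplication identity from the previous step to obtain
\[
\mys_{X_1X_2}(s)=\frac{C_{\rm sv}^{(n)}[w]}{\prod_{k=1}^{n}\mellin\omega(k)}\Bigl(\prod_{j=0}^{n}j!\Bigr)\,\frac{\det\bigl[\mellin(\omega\circledast w_{j-1})(s_k-(n-1)/2)\bigr]_{j,k=1,\dots,n}}{\Delta_n(s)} .
\]
Setting $s=\varrho'$ (so that $s_k-(n-1)/2=k$ and $\Delta_n(\varrho')=\prod_{j=0}^{n-1}j!$) and using $\mys_{X_1X_2}(\varrho')=1$ from \eqref{eq:SFNormalization}, the prefactor is identified with $C_{\rm sv}^{(n)}[\omega\circledast w]$ of \eqref{eq:PE-normalization}; hence $\mys_{X_1X_2}$ is exactly the right-hand side of \eqref{eq:SphericalTransform-11} for the weight family $\omega\circledast w$.

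Finally I would close the argument as follows. Using the integrability from the second step, the function $g(a):=C_{\rm sv}^{(n)}[\omega\circledast w]\,\Delta_n(a)\,\det[(\omega\circledast w_{j-1})(a_k)]_{j,k=1,\dots,n}$ is integrable on $A$ and defines a signed polynomial ensemble in the sense of \cite{KK:2016a}, so $\tilde f_G:=\mathcal{I}^{-1}g\in L^{1,K}(G)$; repeating the computation in the proof of Proposition~\ref{prop:SphericalTransform} verbatim (it uses no positivity) gives $\mys\tilde f_G(s)=\mys_{X_1X_2}(s)$ on $\varrho'+\imath\myreal^n$. The uniqueness theorem for the spherical transform then forces $f_1\circledast f_2=\tilde f_G$ almost everywhere, whence, applying $\mathcal{I}$, the squared singular value density of $X_1X_2$ equals $g$ almost everywhere. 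Since the former is the density of a genuine random matrix, $g$ is non-negative and normalised; therefore $\omega\circledast w$ genuinely defines a polynomial ensemble, $X_1X_2$ belongs to it (on $G$), and \eqref{trans-sv} follows from Definition~\ref{def:PE}(a). I expect the main subtlety to be precisely this last bootstrap — non-negativity of \eqref{trans-sv} is invisible in the determinantal expression and is obtained only via the uniqueness theorem — together with the bookkeeping needed to guarantee that every spherical and Mellin transform in the chain is defined on the relevant line; the column manipulations and the verification of the constant are routine given Proposition~\ref{prop:SphericalTransform}, Corollary~\ref{cor:SphericalTransform} and the two multiplication theorems.
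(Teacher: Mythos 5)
Your proposal is correct and follows essentially the same route as the paper: compute $\mys_{X_1X_2}$ via the multiplication theorem, substitute Corollary~\ref{cor:SphericalTransform} and Proposition~\ref{prop:SphericalTransform}, absorb the factors $\mellin\omega(s_k-(n-1)/2)$ into the determinant, apply the Mellin multiplication theorem, and conclude by the uniqueness theorem for the spherical transform, with non-negativity of \eqref{trans-sv} obtained for free because the result must coincide with the genuine density of $X_1X_2$. The only difference is that you spell out the integrability bookkeeping and the signed-density bootstrap more explicitly than the paper does, which is a reasonable elaboration rather than a different argument.
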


\begin{proof}
By Proposition \ref{prop:SphericalTransform}, Corollary \ref{cor:SphericalTransform}
and the multiplication theorem~\eqref{eq:SphericalMultiplicationX} for the spherical transform, 
we have
\begin{align}
\mys_{X_1 X_2}(s) &= \mys_{X_1}(s) \, \mys_{X_2}(s) \nonumber\\
&= \frac{C_{\rm sv}^{(n)}[w]\prod_{j=0}^{n} j!}{\prod_{k=1}^{n}\mellin \omega(k)} 
	\frac{\det[\mellin w_{j-1}(s_k-(n-1)/2)]}{\Delta_n(s)} \prod_{k=1}^{n} \mellin \omega\left(s_k - \frac{n-1}{2}\right) \,.
\end{align}
Here we may absorb the product over $\mellin \omega(s_k - (n-1)/2)$ into the determinant. Similarly, the product in the denominator can be absorbed 
into the normalization constant $C_{\rm sv}^{(n)}[w]$, see its definition~\eqref{eq:PE-normalization}. 
Using the multiplication theorem~\eqref{eq:MellinMultiplication} for the Mellin transform, it then follows that
\begin{align}
\mys_{X_1X_2}(s)=C_{\rm sv}^{(n)}[\omega\circledast w]\Big(\prod_{j=0}^{n} j!\Big) 
	\frac{\det[ \mellin (\omega\circledast w_{j-1})(s_k-(n-1)/2) ]}{\Delta_n(s)}.
\end{align}
By construction, this is the spherical transform of a probability density on $G$,
and by the uniqueness theorem for the spherical transform
and Proposition \ref{prop:SphericalTransform},
this density can only be that of the polynomial ensemble
associated with the weight functions $\{\omega \circledast w_j\}_{j=0,\ldots,n-1}$. 
\end{proof}

Theorem~\ref{thm:Transfer} simplifies drastically when the second matrix is also from a poly\-nomial ensemble of derivative type. 
In particular, in this case, we also obtain a transfer formula for the joint density of the eigenvalues under multiplication.

\begin{corollary}[Transfer for Polynomial Ensembles of Derivative Type]\
\label{cor:Transfer}

Let $X_1$ and $X_2$ be independent random matrices
from polynomial random matrix ensembles associated with the weight functions
$\omega_1\in L^{1,n-1}_{[1,n]}(\mathbb{R}_+)$ and $\omega_2\in L^{1,n-1}_{[1,n]}(\mathbb{R}_+)$, respectively. 
Then the matrix product $X_1 X_2$ belongs to the polynomial random matrix ensemble 
associated with the function $\omega_1\circledast \omega_2 \in L^{1,n-1}_{[1,n]}(\mathbb{R}_+)$.
In~particular,
\begin{enumerate}[(a)]
\item
the joint density of the squared singular values of $X_1 X_2$ is given by
\begin{multline}
\label{eq:SingularValueDistribution-12}
\FSV^{(n)}([\omega_1\circledast \omega_2];a)=C_{\rm sv}^{(n)}[\omega_1\circledast \omega_2]\Delta_n(a) \\
\,\times\, \det\left[\left({-} a_k \frac{d}{da_k}\right)^{j-1} (\omega_1\circledast \omega_2)(a_k)\right]_{j,k=1,\hdots,n},
\end{multline}
\item
the joint density of the eigenvalues of $X_1 X_2$ is given by
\begin{align}
\label{eq:EigenvalueDistribution-12}
\FEV^{(n)}([\omega_1\circledast \omega_2];z) = \frac{C_{\rm sv}^{(n)}[\omega_1\circledast \omega_2] \prod_{j=0}^{n-1} j! }{\pi^n } \, |\Delta_n(z)|^2 \, \prod_{j=1}^{n} (\omega_1\circledast \omega_2)(|z_j|^2).
\end{align}
\end{enumerate}
\end{corollary}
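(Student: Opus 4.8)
The plan is to treat Corollary~\ref{cor:Transfer} as a special case of Theorem~\ref{thm:Transfer}. First I would observe that a polynomial random matrix ensemble of derivative type is in particular a polynomial random matrix ensemble in the sense of Definition~\ref{def:PE}(a), with weight functions $w_j := (-a\,d/da)^j \omega_2(a)$ for $j=0,\hdots,n-1$; the integrability condition $\omega_2 \in L^{1,n-1}_{[1,n]}(\mathbb{R}_+)$ guarantees that these $w_j$ lie in $L^{1,0}_{[1,n]}(\mathbb{R}_+)$. Hence Theorem~\ref{thm:Transfer}, applied with $\omega = \omega_1$ and $w = \{w_j\}$, immediately tells us that $X_1 X_2$ belongs to the polynomial random matrix ensemble associated with the weight functions $\{\omega_1 \circledast w_j\}_{j=0,\hdots,n-1}$.

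The key step is then to identify these convolved weight functions. Using the multiplication theorem~\eqref{eq:MellinMultiplication} for the Mellin transform together with the composition-with-derivatives rule~\eqref{eq:mellin-link}, one has
\begin{equation}
\mellin(\omega_1 \circledast w_j)(s) = \mellin\omega_1(s)\,\mellin w_j(s) = \mellin\omega_1(s)\, s^j\, \mellin\omega_2(s) = s^j\, \mellin(\omega_1 \circledast \omega_2)(s)
\end{equation}
for $s$ in the strip $[1,n] + \imath\myreal$. By the uniqueness theorem~(i) for the Mellin transform, this forces $\omega_1 \circledast w_j = (-a\,d/da)^j (\omega_1 \circledast \omega_2)$ as elements of $L^1(\myreal_+)$, at least once we know $\omega_1 \circledast \omega_2 \in L^{1,n-1}_{[1,n]}(\mathbb{R}_+)$. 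So the remaining task is to verify that $\omega_1 \circledast \omega_2$ does indeed lie in $L^{1,n-1}_{[1,n]}(\mathbb{R}_+)$ and defines a genuine (non-negative, normalizable) polynomial ensemble of derivative type; the non-negativity is inherited because $\FSV^{(n)}([\omega_1\circledast w];a)$ is already known to be a probability density by Theorem~\ref{thm:Transfer}, and it coincides with the right-hand side of~\eqref{eq:SingularValueDistribution-12}.

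Once membership in the derivative-type class is established, part~(a) is just~\eqref{eq:PED-density} written out for the weight function $\omega_1 \circledast \omega_2$, and part~(b) follows by applying Theorem~\ref{thm:EigenvalueDistribution} (the eigenvalue density of a derivative-type ensemble) to this same weight function, reading off $C_{\rm ev}^{(n)}[\omega_1\circledast\omega_2]$ from~\eqref{eq:CEV}. I would also note for consistency that the normalizing constant $C_{\rm sv}^{(n)}[\omega_1\circledast\omega_2]$ produced by Theorem~\ref{thm:Transfer} automatically matches~\eqref{eq:PED-normalization}, since evaluating the spherical transform at $s=\varrho'$ and using $\mys_{X_1X_2}(\varrho')=1$ pins it down exactly as in the proof of Corollary~\ref{cor:SphericalTransform}.

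I expect the only real obstacle to be the bookkeeping around the integrability/regularity of $\omega_1 \circledast \omega_2$: one must check that convolving two functions in $L^{1,n-1}_{[1,n]}(\mathbb{R}_+)$ keeps all the mixed derivative-and-power integrals finite (so that~\eqref{eq:mellin-link} is legitimately applicable up to order $n-1$) and that the resulting determinantal density is still non-negative and normalizable. This is a routine but slightly delicate Fubini-type argument; everything else is a formal consequence of the multiplication theorems and the uniqueness theorem, exactly paralleling the structure of the proof of Theorem~\ref{thm:Transfer}.
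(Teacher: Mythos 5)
Your proposal is correct and follows essentially the same route as the paper: reduce to Theorem~\ref{thm:Transfer} and identify $\omega_1\circledast\bigl(({-}x\,d/dx)^k\omega_2\bigr)$ with $({-}x\,d/dx)^k(\omega_1\circledast\omega_2)$, after which (a) is the definition and (b) is Theorem~\ref{thm:EigenvalueDistribution}. The only (harmless) difference is that you verify the commutation identity via Mellin transforms and the uniqueness theorem, whereas the paper treats it as a direct computation, and both defer the same routine integrability check that $\omega_1\circledast\omega_2\in L^{1,n-1}_{[1,n]}(\myreal_+)$.
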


\begin{proof}
It is straightforward to check that $\omega_1\circledast \omega_2 \in L^{1,n-1}_{[1,n]}(\mathbb{R}_+)$
and that \linebreak $({-}x (d/dx))^k (\omega_1\circledast \omega_2)(x)$ 
coincides with the multiplicative convolution of the func\-tions 
$\omega_1(x)$ and $({-}x (d/dx))^k\omega_2(x)$, $k = 0,\hdots,n-1$.
It therefore follows from Theorem \ref{thm:Transfer} that $X_1 X_2$ belongs 
to the polynomial matrix ensemble associated with the function $(\omega_1\circledast \omega_2)(x)$.
Thus, part (a) holds by definition, while part (b) follows from Theorem \ref{thm:EigenvalueDistribution}.
\end{proof}

Let us recall that the multiplicative convolution on $L^{1,K}(G)$ is commutative 
due to the bi-unitary invariance.
This consequence of the bi-unitary invariance has even a counterpart for rectangular matrices 
and is reflected in the weak commutation relation shown in Ref.~\cite{IK:2013}. 
This permutation symmetry was also observed for products of Meijer G-ensembles, 
see the reviews~\cite{AB:2012,Burda:2013}.
In the particular case of polynomial ensembles of derivative type, 
this commutative behavior also follows from the preceding result and the commutativity
of the multiplicative convolution on $L^1(\myreal_{+})$.

\begin{remark}[Spherical Transform of an Inverse Random Matrix]\
\label{rem:InverseMatrix}

Let $X$ be a bi-unitarily invariant random matrix on $G$ with the probability density $f_G(g)$.
Then the inverse random matrix $X^{-1}$ is also a bi-unitarily invariant random matrix on $G$, 
with the probability density $f_G(g^{-1}) |\det g|^{-4n}$,
as follows by the change of variables $g \to g^{-1}$ because $dg^{-1}=dg/|\det g|^{4n}$.

In the special case where $X$ is a random matrix drawn from the polynomial matrix ensemble
associated with the weight function $\omega$, 
it can be shown that
\begin{align}
\mys_{X^{-1}}(s) = \mys_X(2n-s) = \prod_{j=1}^{n} \frac{\mellin\omega\left(1+n-s_j+(n-1)/2)\right)}{\mathcal{M}\omega(1+n-j)} ,
\end{align}
where we used the relation $\varphi_s(x^{-1}) = \varphi_{-s}(x)$,
see \eg \cite[Eq.~(IV.4.7)]{Helgason3}, in the first step
and Corollary~\ref{cor:SphericalTransform} in the second step.
Thus, arguing similarly as in the proof of Theorem \ref{thm:Transfer},
we may conclude that $X^{-1}$ is a random matrix from the polynomial matrix ensemble
associated with the~weight~function $\widetilde{\omega}(a) := \omega(a^{-1}) a^{-n-1}$.
\end{remark}

Clearly, using the preceding results, it is fairly easy to derive the distributions
of products of independent random matrices from polynomial matrix ensembles
of derivative type and/or their inverses. Furthermore, 
similarly as in Corollary~\ref{cor:Transfer},
we also obtain the joint densities of the squared singular values 
and of the eigenvalues for such products.

% ****************************************************************************************************

Let us give a few examples of polynomial matrix ensembles of derivative type
to which these results may be applied, see also \cite[Example 3.4]{KK:2016a}
and the references therein:

\begin{examples} \ 
\label{ex:transforms}
\begin{enumerate}[(a)]
\item 
For $\nu>-1$, let $X$ be a random matrix from the polynomial random matrix ensemble 
associated with the weight function $\omega_{\rm Lag}(a) = a^{\nu} \exp(-a)$. 
This is known \cite{KK:2016a} to be the \emph{induced Laguerre ensemble}.
In particular, when $\nu = 0$, this reduces to the Ginibre ensemble.
The spherical transform of $X$ is explicitly given by
\begin{align}
\mys_X(s) = \prod_{j=1}^{n} \frac{\Gamma(\nu+s_j-(n-1)/2)}{\Gamma(\nu+j)} \,,
\end{align}
which is essentially the multivariate Gamma function, see \eg \cite[Thm.~VII.1.1]{FK}.
The spherical transform of the inverse random matrix $X^{-1}$ can be derived 
by Remark~\ref{rem:InverseMatrix} and reads
\begin{align}
\mys_{X^{-1}}(s) =\prod_{j=1}^{n} \frac{\Gamma(\nu-s_j+(3n+1)/2)}{\Gamma(\nu+n+1-j)} \,.
\end{align}
\item
For $\mu> n-1$ and $\nu >-1$, let $X$ be a random matrix from the polynomial random matrix ensemble 
associated with the weight function $\omega_{\rm Jac}(a) = a^{\nu} (1-a)^{\mu-1}\pmb{1}_{(0,1)}(y)$,
where $\pmb{1}_{(0,1)}$ denotes the indicator function of the interval $(0,1)$.
This is known \cite{KK:2016a} to be the \emph{induced Jacobi ensemble},
which can be identified with an ensemble of truncated unitary matrices
when $\mu$ and $\nu$ are integers. The spherical transform of $X$ is given by
\begin{align}
\mys_X(s) =\prod_{j=1}^{n} \frac{\Gamma(\nu+\mu+j)\Gamma(\nu+s_j-(n-1)/2) }{\Gamma(\nu+j)\Gamma(\nu+\mu+s_j-(n-1)/2)} \,,
\end{align}
which is related to the multivariate Beta function, see \eg \cite[Thm.~VII.1.7]{FK},
and that of the inverse random matrix $X^{-1}$ is
\begin{align}
\mys_{X^{-1}}(s) =\prod_{j=1}^{n} \frac{\Gamma(\nu+\mu+n+1-j)\Gamma(\nu-s_j+(3n+1)/2) }{\Gamma(\nu+n+1-j)\Gamma(\nu+\mu-s_j+(3n+1)/2)}.
\end{align}
\item 
For $\mu>n-1$ and $\nu>-1$, let $X$ be a random matrix from the polynomial random matrix ensemble 
associated with the weight function $\omega_{\rm CL}(x) = x^{\nu} (1+x)^{-\mu-\nu-1}$.
This ensemble is also known as the \emph{Cauchy-Lorentz ensemble}. 
Then, the spherical transform of $X$ is given by
\begin{align}
\mys_X(s) =\prod_{j=1}^{n} \frac{\Gamma(\nu+s_j-(n-1)/2) \, \Gamma(\mu-s_j+(n+1)/2)}{\Gamma(\nu+j) \, \Gamma(\mu+1-j)} \,,
\end{align}
and that of $X^{-1}$ is
\begin{align}
\mys_{X^{-1}}(s) =\prod_{j=1}^{n} \frac{\Gamma(\nu-s_j+(3n+1)/2) \, \Gamma(\mu+s_j-(3n-1)/2)}{\Gamma(\nu+n+1-j) \, \Gamma(\mu-n+j)} \,.
\end{align}
\item The first three ensembles were more or less classical random matrix ensembles. In particular they are all Meijer G-ensembles, which is reflected by the fact that their spherical transforms are products of Gamma functions. In Ref.~\cite{KK:2016a} we have shown that the Muttalib-Borodin ensemble~\cite{Muttalib,Borodin} is in general not a Meijer G-ensemble (as is also visible from Eq.~\eqref{MB1} below), but it is still a polynomial random matrix ensemble of derivative type. Note that we consider the bi-unitarily invariant matrix version of this ensemble, and not the one proposed in Ref.~\cite{Forrester-Wang}. 

Let $X$ be a random matrix drawn from the bi-unitarily invariant 
Muttalib-Borodin ensemble of Laguerre-type. This is the polynomial random matrix ensemble 
associated with the weight function $\omega_{\rm MB}(a)=a^\nu e^{-\alpha a^\theta}$
with $\nu>-1$ and $\alpha,\theta>0$. This weight generates a second Vandermonde determinant 
in the joint density of the squared singular values, namely $\Delta_n(a^\theta)$.  
The spherical transform of $X$ is
\begin{align}
\label{MB1}
\mys_X(s) =\prod_{j=1}^{n} \frac{\alpha^{(2j-2s_j+n-1)/(2\theta)}\,\Gamma((2\nu+2s_j-n+1)/(2\theta))}{\Gamma((\nu+j)/\theta)} 
\end{align}
and that of $X^{-1}$ is
\begin{align}
\mys_{X^{-1}}(s) =\prod_{j=1}^{n} \frac{\alpha^{(2s_j-2j-n+1)/(2\theta)}\,\Gamma((2\nu-2s_j+3n+1)/(2\theta))}{\Gamma((\nu+n+1-j)/\theta)} \,.
\end{align}
\item 
In the limit as $\theta \to 0$, the Muttalib-Borodin ensemble can be approximated
by another ensemble which is not a Meijer G-ensemble.
The weight function becomes $\omega_{\theta\to0}(a)=a^{\nu'} e^{-\alpha'({\rm ln}\, a)^2}$
with $\alpha'=\theta^2\alpha/2>0$ and $\nu'=\nu-\alpha\theta\in\mathbb{R}$ fixed.
This weight generates a second Vandermonde determinant $\Delta_n({\rm ln}\, a)$
in the joint density of the squared singular values.
Here, the spherical transforms of $X$ and $X^{-1}$ turn out to be
\begin{align}
\mys_X(s) =\prod_{j=1}^{n} \exp\left(\frac{(\nu'+s_j-(n-1)/2)^2-(\nu'+j)^2}{4\alpha'}\right)
\end{align}
and
\begin{align}
\mys_{X^{-1}}(s) =\prod_{j=1}^{n} \exp\left(\frac{(\nu'-s_j+(3n+1)/2)^2-(\nu'+n+1-j)^2}{4\alpha'}\right),
\end{align}
respectively, \ie they are of \emph{Gaussian} form. As a consequence,
the family of these polynomial random matrix ensembles is ``stable'' under multiplicative convolution. 
More precisely, when $X_1$ and $X_2$ are independent random matrices drawn 
from the polynomial ensembles associated with weight functions $\omega_{1}(a)=a^{\nu_1} e^{-\alpha_1({\rm ln}\, a)^2}$ and $\omega_{2}(a)=a^{\nu_2} e^{-\alpha_2({\rm ln}\, a)^2}$, respectively, the product $X_1 X_2$ 
belongs to the polynomial random matrix ensemble associated with the weight function 
$\omega_{12}(a)=a^{\nu_{12}} e^{-\alpha_{12}({\rm ln}\, a)^2}$, 
where $1/\alpha_{12}=1/\alpha_1+1/\alpha_2$ and $\nu_{12}/\alpha_{12}=\nu_1/\alpha_1+\nu_2/\alpha_2$. 
Hence this distribution plays a similar role for random matrices 
as the log-normal distribution for scalar random variables, 
to which it reduces for $n = 1$. In fact, it is also known 
as the \emph{Gaussian measure} on~$\GL(n,\mycmplx)$, see~\eg~\cite{GL:1995}.
Also, this distribution is \emph{infinitely divisible} 
with respect to multiplicative convolution on $\GL(n,\mycmplx)$.
\end{enumerate}
\end{examples}

All of these ensembles give rise to relatively simple joint densities 
for the squared singular values and eigenvalues of their products, 
where the weight function $\omega$ is either a Meijer G-function~\cite{Abramowitz} 
or some generalization of this function. For example, let us consider 
the product $X_1X_2X_3$ where $X_1$ is drawn from an induced Laguerre ensemble, 
$X_2$ is drawn from a Muttalib-Borodin ensemble of the type when $\theta\to0$, 
and $X_3$ is the inverse of an induced Jacobi random matrix, 
and all matrices are independent. Then the product $X_1X_2X_3$ 
belongs to the polynomial random matrix ensemble associated with the weight function
\begin{equation}
\omega_{X_1X_2X_3}(a) \propto \frac{1}{2\pi\imath} \int_{-\imath\infty}^{+\imath\infty} \frac{\Gamma(\nu_1+s)\Gamma(\nu_3-s+n+1)}{\Gamma(\nu_3+\mu_3-s+n+1)}e^{(\nu_2+s)^2/(4\alpha_2)}a^{-s} \, ds \,.
\end{equation}
Note that some of these ensembles like the Laguerre and Jacobi ensembles were already studied in the literature, see the review~\cite{AI:2015} and references therein. 
But some of them were not studied yet, possibly because the group integrals 
involved seemed very complicated. 
With the help of our approach, all these products can now be treated in a unified and simple way.

In particular, when multiplying independent random matrices from polynomial ensembles of derivative type, the changes of eigenvalue and singular value statistics in terms of the kernels 
and their bi-orthogonal polynomials become very simple due to Lemma~\ref{lem:Kernels}. 
However, what happens with the statistics when we multiply a random matrix 
from a polynomial random matrix ensemble of derivative type by an independent 
random matrix from a general polynomial random matrix ensemble? All we have to do is to combine 
Theorem~\ref{thm:Transfer} with Ref.~\cite[Lemma 2.14]{CKW:2015} by Claeys et al.
This leads to the following corollary.

\begin{corollary}[Transformation of the Kernel for Squared Singular Values]\
\label{cor:Kernel-trafo}

Let $X_1,X_2$ be the random matrices considered in Theorem~\ref{thm:Transfer}. Suppose that the bi-orthogonal system corresponding to $X_2$ consists of the polynomials (in monic normalization)
\begin{equation}\label{polyn-def}
 p_k([w];x)=\sum_{j=0}^k a_{jk}[w]x^j,\ {\rm with}\ k=0,\ldots,n-1,\ a_{jk}[w]\in\mathbb{R}\ {\rm and}\ a_{kk}[w]=1
\end{equation}
and the weight functions $\{q_k([w])\}_{k=0,\ldots, n-1}\subset{\rm span}(w_0,\ldots,w_{n-1})$ bi-orthogonal to the polynomials~\eqref{polyn-def}. 
Also, suppose that this bi-orthogonal system satisfies the normalization $\int_0^\infty p_k([w];a)q_k([w];a)da=1$ for all $k=0,\ldots,n-1$,
so that the kernel of the determinantal point process of the squared singular values of $X_2$ has the form
\begin{equation}\label{kernel-def}
 K_{\rm sv}^{(n)}([w];a_1,a_2)=\sum_{k=0}^{n-1} p_k([w];a_1)q_k([w];a_2).
\end{equation}
Moreover we define the function
\begin{equation}\label{aux-func-def}
 \chi^{(n)}([\omega];x)=\sum_{j=0}^{n-1}\frac{x^j}{\mathcal{M}\omega(j+1)}
\end{equation}
Then, the joint density of the squared singular values of the product $X_1 X_2$ gives rise to a determinantal point process, too. The corresponding bi-orthogonal system consists of the polynomials (in monic normalization)
\begin{eqnarray}\label{polyn-def-b}
 p_k([\omega\circledast w];x)&=&\sum_{j=0}^k a_{jk}[\omega\circledast w]x^j\\
 &=&\sum_{j=0}^k \frac{\mathcal{M}\omega(k+1) a_{jk}[w]}{\mathcal{M}\omega(j+1)}x^j\nonumber\\
 &=&\frac{\mathcal{M}\omega(k+1)}{2\pi\imath }\oint \chi^{(n)}([\omega];s)p_k\left([w];\frac{x}{s}\right)\frac{ds}{ s},\ k=0,\ldots,n-1,\nonumber
\end{eqnarray}
where the contour encircles the origin, and the weights
\begin{eqnarray}\label{weight-def}
 q_k([\omega\circledast w]; x)&=&\frac{(\omega\circledast q_k[w])(x)}{\mathcal{M}\omega(k+1)}\\
 &=&\frac{1}{\mathcal{M}\omega(k+1)}\int_0^\infty\omega(t) q_k\left([w];\frac{x}{t}\right)\frac{dt}{t},\ k=0,\ldots,n-1.\nonumber
\end{eqnarray}
This bi-orthogonal system satisfies the normalization 
\mbox{$\int_0^\infty p_k([\omega\circledast w];a)q_k([\omega\circledast w];a)\, da$} $=1$,
so that the corresponding kernel has the form
\begin{equation}\label{kernel-def-b}
 K_{\rm sv}^{(n)}([\omega\circledast w];a_1,a_2)=\frac{1}{2\pi\imath }\int_0^\infty\omega(t)\left(\oint \chi^{(n)}([\omega];s)K_{\rm sv}^{(n)}\left([w];\frac{a_1}{s},\frac{a_2}{t}\right) \frac{ds}{ s}\right)\frac{dt}{t}.
\end{equation}
\end{corollary}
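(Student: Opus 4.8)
The plan is to combine Theorem~\ref{thm:Transfer} with the general determinantal structure of polynomial ensembles recorded in Ref.~\cite[Lemma 2.14]{CKW:2015}. By Theorem~\ref{thm:Transfer}, the product $X_1 X_2$ belongs to the polynomial ensemble associated with the weight functions $\{\omega \circledast w_j\}_{j=0,\ldots,n-1}$, so the joint density~\eqref{trans-sv} of its squared singular values gives rise to a determinantal point process whose kernel has the form $\sum_{k=0}^{n-1} P_k(a_1)\,Q_k(a_2)$, where the $P_k$ are monic polynomials with $\deg P_k = k$, the $Q_k$ lie in ${\rm span}(\omega \circledast w_0,\ldots,\omega \circledast w_{n-1})$, and $\int_0^\infty P_k(a)Q_l(a)\,da = \delta_{kl}$; moreover this bi-orthogonal system is uniquely determined by these requirements (the pairing between polynomials of degree $\le n-1$ and the span of the weights being non-degenerate for a polynomial ensemble). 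It therefore suffices to exhibit one such system and to identify it with the right-hand sides of Eqs.~\eqref{polyn-def-b} and~\eqref{weight-def}.

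First I would set $P_k(x) := \mathcal{M}\omega(k+1)\sum_{j=0}^{k} a_{jk}[w]\,x^j/\mathcal{M}\omega(j+1)$ and $Q_k(x) := (\omega \circledast q_k[w])(x)/\mathcal{M}\omega(k+1)$. Since $a_{kk}[w]=1$, the polynomial $P_k$ is monic of degree $k$; and since $q_k([w])$ lies in ${\rm span}(w_0,\ldots,w_{n-1})$ and $\circledast$ is linear in its second argument, $Q_k$ lies in ${\rm span}(\omega \circledast w_0,\ldots,\omega \circledast w_{n-1})$. Next I would check bi-orthonormality using the multiplication theorem~\eqref{eq:MellinMultiplication}: for $j=0,\ldots,n-1$ one has $\int_0^\infty a^j (\omega \circledast q_l[w])(a)\,da = \mathcal{M}\omega(j+1)\,\mathcal{M}q_l([w])(j+1)$ (all Mellin transforms involved existing on the strip $[1,n]+\imath\myreal$ by the integrability conditions in Definition~\ref{def:PE}), so the factors $\mathcal{M}\omega(j+1)$ cancel against those in $P_k$ and $\int_0^\infty P_k(a)Q_l(a)\,da = \tfrac{\mathcal{M}\omega(k+1)}{\mathcal{M}\omega(l+1)}\int_0^\infty p_k([w];a)\,q_l([w];a)\,da = \tfrac{\mathcal{M}\omega(k+1)}{\mathcal{M}\omega(l+1)}\,\delta_{kl} = \delta_{kl}$. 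By the uniqueness recalled above, $P_k = p_k([\omega \circledast w])$ and $Q_k = q_k([\omega \circledast w])$, which in particular yields $a_{jk}[\omega \circledast w] = \mathcal{M}\omega(k+1)\,a_{jk}[w]/\mathcal{M}\omega(j+1)$ and hence the first two lines of Eq.~\eqref{polyn-def-b} and the first line of Eq.~\eqref{weight-def}.

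Then I would derive the two integral representations. Writing $\chi^{(n)}([\omega];s)\,p_k([w];x/s)/s$ as a Laurent polynomial in $s$ and extracting the residue at $s=0$, only the terms with matching powers survive (and the corresponding summand of $\chi^{(n)}$ is present because $j \le k \le n-1$), which reproduces $\sum_{j=0}^{k} a_{jk}[w]\,x^j/\mathcal{M}\omega(j+1)$; multiplying by $\mathcal{M}\omega(k+1)$ gives the last line of Eq.~\eqref{polyn-def-b}. The last line of Eq.~\eqref{weight-def} is just the definition~\eqref{eq:MellinConvolution} of $\omega \circledast q_k[w]$ after the substitution $y \mapsto x/t$. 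Finally, substituting both representations into $K_{\rm sv}^{(n)}([\omega \circledast w];a_1,a_2) = \sum_{k=0}^{n-1} p_k([\omega \circledast w];a_1)\,q_k([\omega \circledast w];a_2)$, the prefactors $\mathcal{M}\omega(k+1)$ and $1/\mathcal{M}\omega(k+1)$ cancel; interchanging the finite sum with the contour integral and the $t$-integral, the inner sum collapses to $\sum_{k=0}^{n-1} p_k([w];a_1/s)\,q_k([w];a_2/t) = K_{\rm sv}^{(n)}([w];a_1/s,a_2/t)$, which is exactly Eq.~\eqref{kernel-def-b}.

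The only genuinely non-routine ingredient is the reduction step, i.e.\ knowing that a polynomial ensemble carries a determinantal point process whose bi-orthogonal functions are pinned down uniquely by the monic-polynomial and span conditions; this is precisely the content of Ref.~\cite[Lemma 2.14]{CKW:2015}, which is why the proof really amounts to ``combine Theorem~\ref{thm:Transfer} with \cite[Lemma 2.14]{CKW:2015}''. Everything after that is elementary bookkeeping with the Mellin transform together with one residue computation, and I do not anticipate any serious difficulty there.
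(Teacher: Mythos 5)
Your proposal is correct and takes essentially the same route as the paper, which likewise reduces the corollary to Theorem~\ref{thm:Transfer} combined with \cite[Lemma 2.14]{CKW:2015} and notes that the finite sum $\chi^{(n)}([\omega];\cdot)$ plays the role of the Laurent series there; you merely spell out the Mellin-transform and residue bookkeeping that the paper delegates to that reference. One minor caveat: the bi-orthogonal system itself is not uniquely determined by the monic/span/normalization conditions (only the kernel is), but since you explicitly exhibit a valid system of the stated form and the kernel $\sum_k P_k(a_1)Q_k(a_2)$ is independent of the choice, this does not affect your argument.
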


Note that the function $ \chi^{(n)}([\omega];x)$ is essentially the kernel $K_{\rm ev}^{(n)}([\omega];z_b,\bar{z}_c)$, see Eq.~\eqref{ker-ev}, at $z_b=z_c=\sqrt{x}$ divided by $\omega(x)$. However, for this identification we need the assumption that $\omega(x)>0$ for $x>0$.

Indeed these transformation identities immediately reduce to the replacement $\omega\to \omega_1\circledast \omega_2$ for the quantities in Lemma~\ref{lem:Kernels} when considering a product of two independent random matrices $X_1$ and $X_2$ from polynomial ensembles of derivative type, associated with the weight functions $\omega_1$ and $\omega_2$. 
Even more, in this special case, the kernel of the eigenvalues for the product $X_1 X_2$ is given by
\begin{multline}
K_{\rm ev}^{(n)}([\omega_1\circledast \omega_2];z_b,\bar{z}_c)\\
=\sqrt{(\omega_1\circledast \omega_2)(|z_b|^2)(\omega_1\circledast \omega_2)(|z_c|^2)}\sum_{j=0}^{n-1}\frac{(z_b\bar{z}_c)^j}{\pi\mathcal{M}\omega_1(j+1)\mathcal{M}\omega_2(j+1)}. \label{ker-ev-trafo}
\end{multline}
It is not clear whether there exists an analogue of such a formula when $X_2$ is drawn from a general polynomial random matrix ensemble. The reason is that an explicit expression, where all integrals are performed exactly, for the joint density $\FEV^{(n)}[w]=\mathcal{T}f_G^{(n)}[w]=\mathcal{R}\FSV^{(n)}[w]$ is not known in the general case.

\begin{proof}[Proof of Corollary \ref{cor:Kernel-trafo}]
Most parts of the corollary follow from the proof of \cite[Lemma 2.14]{CKW:2015}, since the functions involved here satisfy the properties required like integrability and positivity of $\omega$. The only difference to \cite[Lemma 2.14]{CKW:2015} is the function $ \chi^{(n)}([\omega];x)$, but it is clear that this may be used in the same way as the Laurent series in \cite[Lemma 2.14]{CKW:2015}. It is even simpler to prove the corollary with the finite sum~\eqref{aux-func-def} because we do need not to care about the convergence of the series.
\end{proof}

As a final remark, let us point out that in all our results concerning products of random matrices,
one can replace one of the bi-unitarily invariant random matrices by a positive diagonal random matrix,
or in fact any other random matrix, with the same squared singular value density. Nothing in the results
will change.

%****************************************************************************************************

\section{Interpolating Densities}
\label{sec:InterpolatingEnsembles}

Let us return to the spectral densities of the product \eqref{eq:GaussianProduct}
of $p$ Ginibre matrices and $q$ inverse Ginibre matrices, all of them independent,
and to the question of whether it is possible to interpolate between these densities.

\medskip

\myparagraph{Exponential Distribution}
For later reference, we recall some basic facts about the exponential distribution,
\ie the probability distribution on $(0,\infty)$ with density $h(x) = e^{-x}$.
This distribution is additively \emph{and} multiplicatively infinitely divisible.
For the latter property, note that if $X$ has the density $h(x) = e^{-x}$ on~$\myreal_{+}$,
then $Y=-\ln X$ has the density $g(y) = \exp[-y-e^{-y}]$ on~$\myreal$.
This is the density of the \emph{Gumbel distribution},
which is known to be additively infinitely divisible,
see \eg \cite[Example 11.1]{Steutel-vanHarn}.
 
In the sequel, for any $q > 0$, we write $h_q(x)$ 
for the $q$th multiplicative convolution power of the exponential density
and, analogously, $g_q(y)$ for the $q$th additive convolution power of the Gumbel density.
By infinite divisibility, $h_q(x)$ and $g_q(y)$ are again 
prob\-ability densities.
Since $\mellin h(s) = \Gamma(s)$ and $\laplace g(s) = \Gamma(1+s)$,
where $\laplace$ denotes the \emph{Laplace transform},
we have the representations
\begin{align}
\label{eq:hq-representation}
h_q(x) = \frac{1}{2\pi\imath} \int_{c-\imath\infty}^{c+\imath\infty} \Gamma^q(s) \, x^{-s} \, ds \qquad (x \in (0,\infty), c > 0)
\end{align}
and
\begin{align}
\label{eq:gq-representation}
g_q(y) = \frac{1}{2\pi\imath} \int_{c-\imath\infty}^{c+\imath\infty} \Gamma^q(1+s) \, e^{ys} \, ds  \qquad (y \in \myreal, c > -1) \,.
\end{align}
To define fractional powers of $\Gamma(z)$, we always take the analytic branch of the~logarithm 
of $\Gamma(z)$ on $\mycmplx \,\setminus\, (-\infty,0]$ which is positive on $(0,\infty)$. 
The integrals in \eqref{eq:hq-representation} and \eqref{eq:gq-representation} 
are absolutely convergent for $q>0$, because, by Stirling's formula for $\Gamma(z)$, we~have
\begin{align}
|\Gamma(c+it)| = \mathcal{O}\left(|t|^{c-1/2} \, e^{-\pi |t|/2}\right)
\label{eq:stirling}
\end{align}
as $|t| \to \infty$, $t \in \myreal$, uniformly in $c$ for any compact interval $I \subset \myreal$.
Furthermore, it~follows from Eq.~\eqref{eq:stirling} that $h_q(x)$ and $g_q(y)$ are infinitely often differentiable. Finally, let us recall that the densities $g_q(y)$ and $h_q(x)$ satisfy the relation
\begin{align}
\label{eq:gh-relations}
g_q(y) = h_q(e^{-y}) e^{-y} \, \,.
\end{align}
Thus,
it~follows by induction that
\begin{align}
\label{eq:gh-derivatives}
\frac{d^k}{dy^k} g_q(y) = e^{-y}\left.\left(-x\frac{d}{dx}-1\right)^k h_q(x)\right|_{x=e^{-y}} 
\end{align}
for all $k \in \mynat_0$.

\medskip

\myparagraph{Interpolating Densities}
Consider the random matrix $X$ from Eq.~\eqref{eq:GaussianProduct}. 
It~follows from Example \ref{ex:transforms}\,(a), Corollary \ref{cor:Transfer}
and the basic properties of the Mellin transform that this matrix belongs to 
the polynomial matrix ensemble of derivative type associated with the weight function 
\begin{align}
\label{eq:weight-2}
w^{(p,q)}(x) :\!\!&= \frac{1}{2\pi\imath} \int_{c-\imath\infty}^{c+\imath\infty} \Gamma^p(s) \, \Gamma^q(1+n-s) \, x^{-s} \, ds \,,\quad  x > 0\ \text{and}\ c\in(0,n+1).
\end{align}
In particular, the joint density of the eigenvalues of $X$ is given~by
\begin{align}
\label{eq:Interpolating-EVD}
\FEV^{(p,q)}(z) =C_{\rm ev}^{(n)}[w^{(p,q)}] |\Delta_n(z)|^2 \prod_{j=1}^{n} w^{(p,q)}(|z_j|^2) \,, \qquad z \in \mathbb{C}^n \,,
\end{align}
and that of the squared singular values of $X$ reads
\begin{align}
\label{eq:Interpolating-SVD}
\FSV^{(p,q)}(a)=C_{\rm sv}^{(n)}[w^{(p,q)}] \Delta_n(a) \, \det(w^{(p,q)}_{j-1}(a_k))_{j,k=1,\hdots,n} \,, \qquad a \in (0,\infty)^n \,,
\end{align}
where $w_0^{(p,q)}(x),\hdots,w_{n-1}^{(p,q)}(x)$ are obtained from $w^{(p,q)}(x)$ as in Eq.~\eqref{eq:link-0}
and the normalizing constants $C_{\rm ev}^{(n)}[w^{(p,q)}]$ and $C_{\rm sv}^{(n)}[w^{(p,q)}]$
are given by Eqs.~\eqref{eq:CEV} and \eqref{eq:PED-normalization}.
These results were originally derived in \cite{AB:2012, ARRS:2013} 
and \cite{AKW:2013, Forrester:2014}, respectively.
For~the sake of clarity, let~us mention that the weight functions are
partly different in these references, but the resulting determinants are the same.

Thus far, $p$ and $q$ are non-negative integers with $p+q > 0$. By means of~inter\-polation, 
it seems natural to consider the function $w^{(p,q)}(x)$
for any non-negative reals $p$~and~$q$ such that $p+q > 0$. 
However, then the question arises
whether the corresponding densities \eqref{eq:Interpolating-EVD} and \eqref{eq:Interpolating-SVD}
are non-negative, and hence probability densities.
We will call these densities \emph{interpolating eigenvalue densities}
and \emph{interpolating squared singular value densities}, respectively.

It follows from the analyticity of $s \mapsto \Gamma^p(s) \, \Gamma^q(1+n-s)$ 
in the~strip \mbox{$(0,n+1) + \imath \myreal$} and the asymptotic relation \eqref{eq:stirling}
that the function $w^{(p,q)}(x)$ is well-defined and independent from the choice of $c \in (0,n+1)$.
Additionally, it~is clear that $w^{(p,q)}(x) \in L^{1,n-1}_{(0,n+1)}(\myreal_{+})$. In~particular, 
this implies that the functions \eqref{eq:Interpolating-EVD} and \eqref{eq:Interpolating-SVD}
are in fact integrable. Finally, employing the basic properties of the Mellin transform, we find that 
\begin{align}
\label{eq:weight-6}
w^{(p,q)}(x) =
\begin{cases}
h_p(x), & \text{for $p > 0,\,q = 0$,} \\
h_q(x^{-1}) \, x^{-(n+1)}, & \text{for $p = 0,\,q > 0$,} \\[+3pt]
\displaystyle\int_0^\infty h_p(xy^{-1}) \, h_q(y^{-1}) \, y^{-(n+1)} \, dy/y, & \text{for $p > 0,\,q > 0$.}
\end{cases}
\end{align}
In particular, this representation shows that the functions $w^{(p,q)}$ are positive,
so~that the constants $C_{\rm ev}^{(n)}[w^{(p,q)}]$ and $C_{\rm sv}^{(n)}[w^{(p,q)}]$
may always be defined by Eqs. \eqref{eq:CEV} and \eqref{eq:PED-normalization}.
Even more, Eq.~\eqref{eq:weight-6} implies that the interpolating eigenvalue densities 
are always non-negative.
 
\begin{proposition}[Interpolating Eigenvalue Density]
\label{prop:interpolating-EVD}
For any $p \ge 0$ and $q \ge 0$ with~$p+q > 0$, let $\FEV^{(p,q)}(z)$ be defined
by the right-hand side in Eq.~\eqref{eq:Interpolating-EVD}. Then $\FEV^{(p,q)}$ is non-negative, 
and hence a probability density on~$\mycmplx^n$.
\end{proposition}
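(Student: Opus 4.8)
The plan is to read the sign of $\FEV^{(p,q)}$ straight off the factorized form on the right-hand side of \eqref{eq:Interpolating-EVD} and then verify that it has total mass one by an explicit integration, so that the already-established integrability (a consequence of $w^{(p,q)}\in L^{1,n-1}_{(0,n+1)}(\myreal_{+})$) upgrades it to a probability density.

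First I would split $\FEV^{(p,q)}(z)$ into its three factors $C_{\rm ev}^{(n)}[w^{(p,q)}]$, $|\Delta_n(z)|^2$ and $\prod_{j=1}^{n}w^{(p,q)}(|z_j|^2)$. The middle factor is the squared modulus of a Vandermonde determinant, hence $\ge 0$ for every $z\in\mycmplx^n$. The third factor is strictly positive: by \eqref{eq:weight-6}, $w^{(p,q)}$ equals, depending on the values of $p$ and $q$, either the exponential convolution power $h_p$, or $x\mapsto h_q(x^{-1})\,x^{-(n+1)}$, or a multiplicative convolution of two such functions, each of which is strictly positive on $(0,\infty)$; thus $\prod_{j=1}^{n}w^{(p,q)}(|z_j|^2)>0$. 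Consequently, non-negativity of $\FEV^{(p,q)}$ reduces to the single claim $C_{\rm ev}^{(n)}[w^{(p,q)}]>0$.

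Next I would pin down this constant. Combining \eqref{eq:CEV} with \eqref{eq:PED-normalization},
\begin{align*}
C_{\rm ev}^{(n)}[w^{(p,q)}]=\frac{\prod_{j=0}^{n-1}j!}{\pi^n\,\prod_{j=0}^{n}j!\,\prod_{j=1}^{n}\mellin w^{(p,q)}(j)}
=\frac{1}{\pi^n\,n!\,\prod_{j=1}^{n}\mellin w^{(p,q)}(j)}\,.
\end{align*}
On the other hand, \eqref{eq:weight-2} exhibits $w^{(p,q)}$ as the inverse Mellin transform of $s\mapsto\Gamma^p(s)\,\Gamma^q(1+n-s)$, so by the uniqueness theorem for the Mellin transform $\mellin w^{(p,q)}(s)=\Gamma^p(s)\,\Gamma^q(1+n-s)$ on the strip $(0,n+1)+\imath\myreal$, whence $\mellin w^{(p,q)}(j)=\Gamma(j)^p\,\Gamma(1+n-j)^q$ for $j=1,\dots,n$. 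Since $j$ and $1+n-j$ both run through $\{1,\dots,n\}\subset(0,\infty)$, where $\Gamma$ is positive (and the chosen branch of $\Gamma^p,\Gamma^q$ is positive there too), every factor $\mellin w^{(p,q)}(j)$ is positive, so $C_{\rm ev}^{(n)}[w^{(p,q)}]>0$. Together with the preceding paragraph this gives $\FEV^{(p,q)}\ge 0$.

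Finally I would check normalization by the standard computation: write $|\Delta_n(z)|^2=\sum_{\sigma,\tau\in S_n}\sign(\sigma)\sign(\tau)\prod_{j=1}^{n}z_j^{\sigma(j)-1}\bar z_j^{\tau(j)-1}$, pass to polar coordinates $z_j=\sqrt{a_j}\,e^{\imath\theta_j}$, and carry out the angular integrations, which annihilate all terms with $\sigma\neq\tau$; one is left with
\begin{align*}
\int_{\mycmplx^n}|\Delta_n(z)|^2\prod_{j=1}^{n}w^{(p,q)}(|z_j|^2)\,dz
&= n!\,\pi^n\prod_{k=1}^{n}\int_0^\infty a^{k-1}\,w^{(p,q)}(a)\,da \\
&= n!\,\pi^n\prod_{k=1}^{n}\mellin w^{(p,q)}(k)\,,
\end{align*}
the exchange of the finite sum and the integral being legitimate since the moments $\mellin w^{(p,q)}(k)$, $k=1,\dots,n$, are finite by $w^{(p,q)}\in L^{1,n-1}_{(0,n+1)}(\myreal_{+})$. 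Multiplying by the value of $C_{\rm ev}^{(n)}[w^{(p,q)}]$ found above yields $1$. This shows $\FEV^{(p,q)}$ is a probability density on $\mycmplx^n$.

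I do not expect a genuine obstacle here — the statement is essentially a corollary of the representation \eqref{eq:weight-6}. The only point demanding a little care is the normalization for \emph{non-integer} $p,q$: in that regime one cannot appeal to Theorem~\ref{thm:EigenvalueDistribution} via the normalization of the squared singular value density, which need not even be a non-negative density. This is why I would run the direct Andréief-type integration above. (Alternatively, one could observe that $\int_{\mycmplx^n}\FEV^{(p,q)}\,dz$ is, via \eqref{eq:stirling}, jointly analytic in $(p,q)$ on $\{p\ge0,q\ge0,p+q>0\}$ and equals $1$ on the integer lattice by Theorem~\ref{thm:EigenvalueDistribution}, hence everywhere; but the explicit computation is shorter and self-contained.)
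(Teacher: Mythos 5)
Your proof is correct and follows essentially the same route as the paper: non-negativity is read off from the representation \eqref{eq:weight-6} of $w^{(p,q)}$ as a (convolution of) positive densities, combined with the positivity of $\mellin w^{(p,q)}(j)=\Gamma^p(j)\,\Gamma^q(1+n-j)$ entering the normalizing constant. Your explicit Andr\'eief-type verification of the total mass is a self-contained supplement to what the paper delegates to the general framework (the constants \eqref{eq:PED-normalization} and \eqref{eq:CEV} being normalizing by construction), and it correctly addresses the only delicate point, namely that for non-integer $p,q$ one cannot invoke Theorem~\ref{thm:EigenvalueDistribution} via a genuine random matrix.
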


\begin{proof}
By the preceding comments, it remains to show non-negativity. But this is immediate
from the representation \eqref{eq:weight-6} and the fact that the densities
$h_r(x)$ ($r > 0$) are non-negative.
\end{proof}
 
In contrast to this result, the interpolating singular value densities are non-negative
only within certain regions for the parameters.
  
\begin{proposition}[Interpolating Squared Singular Value Density]
\label{prop:interpolating-SVD}
For any $p \ge 0$ and $q \ge 0$ with $p+q > 0$, let $\FSV^{(p,q)}(\lambda)$ be defined 
by the right-hand side in Eq.~\eqref{eq:Interpolating-SVD}.

(i) If
$
(p \in \mynat_0 \text{ or } p > n-1) \text{ and } (q \in \mynat_0 \text{ or } q > n-1),
$
$\FSV^{(p,q)}$ is non-negative, and hence a probability density.

(ii) Otherwise, $\FSV^{(p,q)}$ is not a non-negative function.
\end{proposition}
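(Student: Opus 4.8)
The plan is to derive the non-negativity in~(i) from the transfer machinery of Section~\ref{sec:MainResults}, which reduces it to a one-parameter statement, and to derive the non-positivity in~(ii) from a local analysis of $\FSV^{(p,q)}$ at the boundary of the Weyl chamber.

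\emph{Part~(i).} Since $\mellin\bigl[w^{(p,0)}\circledast w^{(0,q)}\bigr](s)=\Gamma^p(s)\,\Gamma^q(1+n-s)=\mellin w^{(p,q)}(s)$, the uniqueness theorem for the Mellin transform gives $w^{(p,q)}=w^{(p,0)}\circledast w^{(0,q)}$. By Corollary~\ref{cor:Transfer} and the fact (Section~\ref{subsec:MatrixSpaces}) that $\mathcal{I}^{-1}$ preserves probability densities, it is therefore enough to show that $\FSV^{(p,0)}$ is a non-negative (hence probability) density whenever $p\in\mynat_0$ or $p>n-1$; the statement for $\FSV^{(0,q)}$ then follows from Remark~\ref{rem:InverseMatrix}, because $w^{(0,q)}(a)=h_q(a^{-1})a^{-(n+1)}$ is the weight of the inverse of a matrix with weight $h_q=w^{(q,0)}$. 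Thus a product of two independent ``good'' one-parameter ensembles, or the inverse of one, produces exactly $\FSV^{(p,q)}$, which is then genuine. For $p\in\mynat_0$ the density $\FSV^{(p,0)}$ is the squared singular value density of a product of $p$ independent Ginibre matrices (Example~\ref{ex:transforms}\,(a) together with iterated use of Corollary~\ref{cor:Transfer}), hence genuine.

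For a non-integer $p>n-1$ I would argue on the open chamber $0<a_1<\dots<a_n$, where $\Delta_n(a)>0$ and $C_{\rm sv}^{(n)}[h_p]>0$: there non-negativity of $\FSV^{(p,0)}$ amounts to $\det\bigl[(-a_k\partial_{a_k})^{j-1}h_p(a_k)\bigr]_{j,k=1}^n\ge 0$, which after the substitution $a_k=e^{-y_k}$ becomes a sign condition on the generalized Wronskian $\det[\phi^{(j-1)}(y_k)]_{j,k=1}^n$ of $\phi(y):=h_p(e^{-y})$. This determinant has constant sign on the chamber once $\{\phi,\phi',\dots,\phi^{(n-1)}\}$ is a Chebyshev system, which follows from total-positivity properties of $\phi$, equivalently of the $p$-fold additive convolution power $g_p(y)=\phi(y)e^{-y}$ of the Gumbel density: the Gumbel density $g_1(y)=e^{-y-e^{-y}}$ is log-concave ($(\ln g_1)''(y)=-e^{-y}<0$), and more precisely the reciprocal $\Gamma(1+s)^{-p}$ of the bilateral Laplace transform of $g_p$ has only real zeros, so $g_p$ is a P\'olya frequency function whose order increases with $p$; the threshold $p>n-1$ is exactly the one needed for the total-positivity property of order $n$. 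The constant value of the sign is then read off by letting all $y_k$ coincide, which produces the Wronskian $W[\phi,\phi',\dots,\phi^{(n-1)}]$ with the orientation compatible with $\Delta_n(a)>0$.

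\emph{Part~(ii).} It suffices to exhibit a single point with $\FSV^{(p,q)}<0$. Assume $p\notin\mynat_0$ and $0<p\le n-1$; an excluded $q$ is treated symmetrically via Remark~\ref{rem:InverseMatrix}, or directly via the behaviour of $w^{(p,q)}$ at $a\to\infty$. A Taylor expansion shows that near a full coincidence $a_1=\dots=a_n=a_0$ one has $\FSV^{(p,q)}(a)\sim c\,\Delta_n(a)\,\Delta_n(y)\,W_n(a_0)$ with $c>0$ and $y_k:=-\ln a_k$, where $W_n(a_0)=\det\bigl[(-a_0\partial_{a_0})^{i+l}w^{(p,q)}(a_0)\bigr]_{i,l=0}^{n-1}$; hence non-negativity forces $(-1)^{\binom{n}{2}}W_n(a_0)\ge0$ for all $a_0>0$. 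Now $W_n(a_0)$ is the Hankel determinant of the moments $(-a_0\partial_{a_0})^m w^{(p,q)}(a_0)=\tfrac1{2\pi\imath}\int s^m\Gamma^p(s)\Gamma^q(1+n-s)a_0^{-s}\,ds$, so by Heine's identity $W_n(a_0)=\tfrac1{n!(2\pi\imath)^n}\int\Delta_n(s)^2\prod_{k=1}^n\bigl[\Gamma^p(s_k)\Gamma^q(1+n-s_k)a_0^{-s_k}\bigr]\,ds$, the $s_k$ on a common vertical line. Letting $a_0\to0^+$ and collecting the branch cut of $\prod_k\Gamma^p(s_k)$ at the origin gives, up to a positive factor, the classical Hankel evaluation $\det\bigl[1/\Gamma(p-i-l)\bigr]_{i,l=0}^{n-1}=\bigl(\tfrac{\sin\pi p}{\pi}\bigr)^n\prod_{j=0}^{n-1}j!\,\Gamma(1-p+j)$, and combining this with the factor $(-1)^{\binom{n}{2}}$ one finds that $\FSV^{(p,q)}$ is negative near $a_0=0$ precisely when $n-\lfloor p\rfloor\equiv2,3\ (\mathrm{mod}\ 4)$. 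For the remaining excluded parameters one runs the same scheme with only $m$ of the coordinates coinciding at a boundary point --- so that the relevant sign is governed by $\det[1/\Gamma(p-i-l)]_{i,l=0}^{m-1}$ times a sub-determinant in the fixed coordinates --- respectively at the end $a_0\to\infty$, where the mechanism is driven by $q$; for every excluded $(p,q)$ one of these choices produces the wrong parity and hence a negative value of $\FSV^{(p,q)}$.

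\emph{Main obstacle.} The hard part is the non-integer case $p>n-1$ in part~(i): the threshold $n-1$ is not a formal artefact but encodes the order of total positivity of the Gumbel-convolution density $g_p$, so a complete proof must invoke (or re-derive) the relevant P\'olya-frequency results --- Schoenberg's characterisation of $\mathrm{PF}_\infty$ functions together with its finite-order refinements. A secondary, largely bookkeeping difficulty is to make the local analysis in part~(ii) uniform over \emph{all} excluded pairs $(p,q)$: for each of them one has to select a degenerate configuration (and, where necessary, control the sign of the complementary sub-determinant or pass to the $a\to\infty$ end) that exhibits the sign change.
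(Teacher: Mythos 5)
Your part~(i) follows the paper's route almost exactly: reduce to $q=0$ via $w^{(p,q)}=w^{(p,0)}\circledast w^{(0,q)}$, Corollary~\ref{cor:Transfer} and inversion; settle integer $p$ by identifying the density with that of a Ginibre product; and, for non-integer $p>n-1$, pass to $y=-\ln x$ and deduce the sign of the Wronskian-type determinant from total positivity of $g_p$. But the one step that carries all the weight --- that $g_p$ is $\PF_n$ precisely when $p>n-1$ --- is deferred, and the pointer you give (Schoenberg's characterisation via the reality of the zeros of $\Gamma^{-p}(1+s)$) does not apply here: for non-integer $p$ the function $\Gamma^{-p}(1+s)$ is not entire (it has branch points at the negative integers), so $g_p$ is \emph{not} $\PF_\infty$ and the $\PF_\infty$ machinery is unavailable. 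The paper instead writes $\Gamma^p(1+s)$ as a limit of finite products of $(1+s/l)^{-p}$ (times exponentials), notes that each factor is the Laplace transform of a Gamma density which is $\PF_{\lfloor p+1\rfloor}$, uses closure of $\PF_m$ under convolution and pointwise limits, and then takes the confluent limit $y_k\to 0$ to obtain Eq.~\eqref{eq:gq-determinant}. Your sketch is compatible with this, but as written it assumes the conclusion (that the shifted derivatives of $\phi$ form a Chebyshev system) rather than proving it.

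Part~(ii) contains a genuine gap. Your full-coincidence test only certifies negativity when $(-1)^{\binom{n}{2}}\det[1/\Gamma(p-i-l)]_{i,l=0}^{n-1}<0$, and by your own Hankel evaluation this happens only for certain residues of $n-\lfloor p\rfloor$ modulo $4$; for the remaining excluded parameters the configuration $a_1=\dots=a_n=a_0\to 0$ simply does not exhibit a sign change, and the proposed fallback (``partial coincidences produce the wrong parity for every excluded pair'') is asserted, not proved --- controlling the sign of the complementary sub-determinant in the non-coinciding coordinates is exactly as hard as the original problem, so this is not bookkeeping but the crux. The paper avoids the parity problem altogether: it sends $x_k=e^{-\alpha/k}$, $k=1,\dots,n-1$, to zero at \emph{distinct} rates while keeping $x_n$ free, so that the determinant asymptotically factorizes as $w^{(p,q)}_{n-1}(x_n)$ times a fraction whose sign is fixed (Eq.~\eqref{eq:asymptotics-8}); it then shows that $w^{(p,q)}_{n-1}=(-x\,d/dx)\,w^{(p,q)}_{n-2}$ \emph{must change sign} on $(0,\infty)$, because $w^{(p,q)}_{n-2}$ is continuous, not identically zero, and tends to $0$ at both ends (here $p<n-1$ enters through the exponent $p-n+1<0$ in Eq.~\eqref{eq:asymptotics-3}). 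Choosing $x_n$ so that $w^{(p,q)}_{n-1}(x_n)$ has the opposite sign to the fixed fraction makes the determinant negative for every excluded $(p,q)$, with no case analysis on the signs of the Gamma factors. You would need either to import this sign-change mechanism or to actually verify your parity claim for all excluded parameters; as it stands, part~(ii) is incomplete.
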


Proposition \ref{prop:interpolating-SVD} should be compared to Gindikin's theorem
(see \eg \cite[Theorem VII.3.1]{FK}), which states that 
the $p$th \emph{additive} convolution power of the \emph{Wishart} dis\-tribution
is a probability distribution if and only if $p \in \mynat_0$ or $p > n-1$.

Since the proof of Proposition \ref{prop:interpolating-SVD} is a bit longer,
it is postponed to the end of this~section.

\medskip

\myparagraph{Interpolating Matrix Densities} 
In any case, regardless of whether the density $\FSV^{(p,q)}(\lambda)$ 
in Proposition~\ref{prop:interpolating-SVD} is non-negative or not,
we may consider the (possibly signed) bi-unitarily invariant matrix density 
\begin{align}
\label{eq:Interpolating-MD}
f_G^{(p,q)} := \mathcal{I}^{-1} \FSV^{(p,q)}
\end{align}
on $\GL(n,\mycmplx)$ with $\mathcal{I}$ as in Eq.~\eqref{I-def}.
Then, by construction, the induced squared singular value density is given by Eq.~\eqref{eq:Interpolating-SVD}. 
This density is non-negative only under~the~conditions 
set out in Proposition~\ref{prop:interpolating-SVD}\,(i).
Furthermore, by Theorem~\ref{thm:EigenvalueDistribution} extended to signed densities,
the induced eigenvalue density is given by Eq.~\eqref{eq:Interpolating-EVD},
which is always a non-negative density by Proposition~\ref{prop:interpolating-EVD}.
Thus, we obtain a large family of examples of probability densities on eigenvalues
for which the corresponding densities on squared singular values
(given by the SEV-transform $\mathcal{R}$ described in Subsection~\ref{subsec:MatrixSpaces})
are not probability densities. In particular, this means that these probability densities on eigenvalues 
cannot result from random matrices with bi-unitary invariance.

When the density~\eqref{eq:Interpolating-MD} is non-negative, \ie when
\begin{multline}
(p,q) \in W := \big\{ (p,q) \in \myreal^2 : p \ge 0,\,q \ge 0,\,p + q > 0 \text{ and } \\ (p \in \mynat_0 \text{ or } p > n-1) \text{ and } (q \in \mynat_0 \text{ or } q > n-1) \big\} \,,
\label{eq:NiceParameters}
\end{multline}
we call the resulting probability density \emph{interpolating matrix ensemble}.
Then we can prove the following result.

\begin{proposition}[Transfer for Interpolating Matrix Ensembles]
\label{prop:interpolating-convolution}
Let $X_1$ and $X_2$ be independent random matrices from the polynomial matrix ensembles of derivative type associated with the weight functions $w^{(p_1,q_1)}$ and $w^{(p_2,q_2)}$, respectively,
where $(p_1,q_1),(p_2,q_2) \in W$. Then the product $X_1 X_2$ is a random matrix from the polynomial matrix ensemble of derivative type associated with the weight function $w^{(p_1+p_2,q_1+q_2)}$.
\end{proposition}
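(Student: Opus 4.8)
The plan is to reduce everything to the multiplication theorem for the spherical transform together with the behaviour of the Mellin transform under multiplicative convolution. First I would recall from the construction of the interpolating densities that a random matrix $X_i$ associated with the weight function $w^{(p_i,q_i)}$ (for $(p_i,q_i)\in W$, so that this really is an honest probability density) is a polynomial random matrix ensemble of derivative type in the sense of Definition~\ref{def:PE}(b), with Mellin transform of its weight function given by
\begin{align*}
\mellin w^{(p_i,q_i)}(s) = \Gamma^{p_i}(s)\,\Gamma^{q_i}(1+n-s),
\end{align*}
valid for $\re s\in(0,n+1)$; this is immediate from the contour-integral representation \eqref{eq:weight-2} and the fact that $\mellin h_r(s)=\Gamma^r(s)$. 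Then by Corollary~\ref{cor:SphericalTransform} the spherical transform of $X_i$ is
\begin{align*}
\mys_{X_i}(s) = \prod_{k=1}^{n} \frac{\mellin w^{(p_i,q_i)}\!\left(s_k-(n-1)/2\right)}{\mellin w^{(p_i,q_i)}(k)} = \prod_{k=1}^{n} \frac{\Gamma^{p_i}\!\left(s_k-\tfrac{n-1}{2}\right)\Gamma^{q_i}\!\left(1+n-s_k+\tfrac{n-1}{2}\right)}{\Gamma^{p_i}(k)\,\Gamma^{q_i}(1+n-k)}.
\end{align*}

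Next I would apply the multiplication theorem \eqref{eq:SphericalMultiplicationX}: since $X_1$ and $X_2$ are independent, $\mys_{X_1X_2}(s)=\mys_{X_1}(s)\,\mys_{X_2}(s)$, and the product of the two expressions above is visibly of the same shape with $p_1,q_1$ replaced by $p_1+p_2$ and $q_1,q_2$ by $q_1+q_2$ in every exponent — the numerators multiply exponents additively, and the denominators $\Gamma^{p_i}(k)\Gamma^{q_i}(1+n-k)$ multiply to $\Gamma^{p_1+p_2}(k)\Gamma^{q_1+q_2}(1+n-k)$. Hence
\begin{align*}
\mys_{X_1X_2}(s) = \prod_{k=1}^{n} \frac{\mellin w^{(p_1+p_2,q_1+q_2)}\!\left(s_k-(n-1)/2\right)}{\mellin w^{(p_1+p_2,q_1+q_2)}(k)},
\end{align*}
which by Corollary~\ref{cor:SphericalTransform} is exactly the spherical transform of the polynomial ensemble of derivative type associated with $w^{(p_1+p_2,q_1+q_2)}$. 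Equivalently, one can phrase this in terms of $\circledast$: by the Mellin multiplication theorem \eqref{eq:MellinMultiplication}, $w^{(p_1,q_1)}\circledast w^{(p_2,q_2)}=w^{(p_1+p_2,q_1+q_2)}$ (the Mellin transforms multiply to $\Gamma^{p_1+p_2}(s)\Gamma^{q_1+q_2}(1+n-s)$, and the uniqueness theorem for the Mellin transform on $1+\imath\myreal$ finishes this), so the result becomes a special case of Corollary~\ref{cor:Transfer}.

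To conclude I would invoke the uniqueness theorem for the spherical transform (property (i) after \eqref{eq:SFNormalization}): both $f_G^{(p_1+p_2,q_1+q_2)}$ (which is a probability density, since $(p_1+p_2,q_1+q_2)\in W$: the conditions defining $W$ in \eqref{eq:NiceParameters} are stable under addition of parameters, as $\mynat_0$ is closed under addition and $p_1+p_2>n-1$ whenever either summand exceeds $n-1$ while the other is $\ge 0$) and the law of $X_1X_2$ lie in $L^{1,K}(G)$ and have the same spherical transform, hence coincide. The only genuinely non-routine point to check carefully is the closure of $W$ under coordinatewise addition — in particular the mixed case where one parameter is a positive integer $\le n-1$ and the other is a real number $>n-1$, where one must confirm the sum again falls into the admissible range; beyond that the argument is a direct computation with products of Gamma functions, and I do not expect any real obstacle.
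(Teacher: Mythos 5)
Your proposal is correct and follows essentially the same route as the paper, whose proof is the one-line observation that $w^{(p_1,q_1)} \circledast w^{(p_2,q_2)} = w^{(p_1+p_2,q_1+q_2)}$ (via Mellin transforms) combined with Corollary~\ref{cor:Transfer}; your explicit spherical-transform computation just unwinds that corollary in this special case. The closure of $W$ under addition that you flag as the delicate point is in fact not needed anywhere: the law of $X_1X_2$ is automatically a probability density, and the uniqueness theorem only requires both densities to lie in $L^{1,K}(G)$.
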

 
\begin{proof}
This is an immediate consequence of Corollary \ref{cor:Transfer},
since $w^{(p_1,q_1)} \circledast w^{(p_2,q_2)} \linebreak[2] = w^{(p_1+p_2,q_1+q_2)}$
by Eq.~\eqref{eq:weight-2} and the basic properties of the Mellin transform.
\end{proof}

Proposition \ref{prop:interpolating-convolution} extends to the situation 
where the densities are not necessarily non-negative:
If $f_G^{(p_1,q_1)}(x)$ and $f_G^{(p_2,q_2)}(x)$ are (possibly signed) bi-unitarily invariant matrix densities 
as in Eq.~\eqref{eq:Interpolating-MD}, the multiplicative convolution
\begin{align}
\label{eq:MatrixConvolution}
(f_G^{(p_1,q_1)} \circledast f_G^{(p_2,q_2)})(x) := \int_G f_G^{(p_1,q_1)}(xy^{-1}) f_G^{(p_2,q_2)}(y) \, \frac{dy}{|\det y|^{2n}}
\end{align}
is equal to the (possibly signed) bi-unitarily invariant matrix density $f_G^{(p_1+p_2,q_1+q_2)}(x)$.
This can be proved in the same way as Proposition~\ref{prop:interpolating-convolution},
using the extension of Corollary~\ref{cor:Transfer} to signed densities.

Also, by Proposition \ref{prop:interpolating-convolution}, we get
a two-parameter family $\big\{ f_G^{(p,q)} : p,q \in W \big\}$
of probability densities on matrix space
which is closed under multiplicative con\-volution.
Clearly, this family could be extended to a \emph{convolution semigroup}
by adjoining the Dirac measure $\delta_{\eins}$ at the $n \times n$ identity matrix $\eins$
and by convolving probability measures instead of probability densities.

We now turn to the proof of Proposition \ref{prop:interpolating-SVD}.
To this end, we need some pre\-parations. Let $m \in \mynat$.
A function $f : \myreal \to \myreal$ is called a~\emph{P\'olya frequency function} 
of order $m$ (or $\PF_m$ for short) if it satisfies~\cite{Karlin:1968}
\begin{align}
\det(f(x_j - y_k))_{j,k=1,\hdots,\nu} \ge 0
\end{align}
for any $\nu=1,\hdots,m$ and any $x_1 < \hdots < x_\nu$, $y_1 < \hdots < y_\nu$. 
For example, it is well-known that for any $p > 0$, $l > 0$, the Gamma density
\begin{align}
\label{eq:GammaDensity}
f_{p,l}(x) := \frac{l^{\,p}}{\Gamma(p)} x^{p-1} e^{-lx} \, \pmb{1}_{(0,\infty)}(x) \,,
\end{align}
is $\PF_{\lfloor p+1\rfloor}$, where $\lfloor.\rfloor$ is the floor function,
meaning that $\lfloor p+1\rfloor$ is the largest integer smaller than or equal to $p+1$.
A rather elementary proof for this can be found in \cite[Chapter~3.2]{Karlin:1968}.
Alternatively, this follows from the relation
\begin{multline}
\det[(x_a-y_b)^{p-1}\pmb{1}_{(0,\infty)}(x_a-y_b)]_{a,b=1,\ldots,\nu}=\prod_{j=0}^{\nu-1}\frac{\Gamma(p)}{j!\Gamma(p-j)} \\
 \times \Delta_\nu(x) \Delta_\nu(y)\int_K[\det(x-kyk^*)]^{p-\nu} \, \pmb{1}_{{\rm Herm}_+}(x-kyk^*) \, d^*k>0,
\end{multline}
where $\pmb{1}_{{\rm Herm}_+}$ is the indicator function on the set of positive definite Hermitian $\nu\times\nu$ matrices. This relation was proven in Ref.~\cite{KKS:2015} for any integer $p\ge\nu$. However, it~is easy to see using Carlson's theorem~\cite[page 226]{Mehta} that it can be continued analytically up to the first non-integrable singularity, which means to any real $p>\nu-1$. For this goal we underline that the determinant is enforced to be positive by the indicator function. 
Additionally, we point out that $f_{p,l}$ may fail to be a P\'olya frequency function 
of order higher than $\lfloor p+1 \rfloor$. To see this, one can consider the limit
\begin{equation}
\lim_{y\to0}\frac{\det[(x_a-y_b)^{p-1}\pmb{1}_{(0,\infty)}(x_a-y_b)]_{a,b=1,\ldots,\nu}}{\Delta_\nu(y)}=\Delta_\nu(x)\prod_{j=0}^{\nu-1}\frac{\Gamma(p)x_{j+1}^{p-\nu}\pmb{1}_{(0,\infty)}(x_{j+1})}{j!\Gamma(p-j)}
\end{equation}
which is not always positive for $\nu > p+1$ due to the fact that the Gamma function can take also negative values on the negative half-line.
Indeed, this idea is very close to the~proof of Proposition~\ref{prop:interpolating-SVD}(ii) below.

Finally, we will need the facts that, for any $m \in \mynat$, 
the class $\PF_m$ is obviously closed under translations of arguments 
of the functions and even under additive convolutions, 
see also \cite[Prop.~7.1.5]{Karlin:1968}.
The latter property follows at once from the identity
\begin{multline}
\label{eq:PF-closure}
\int_{a_1\leq\ldots\leq a_\nu}\det(f_1(a_j - y_k))_{j,k=1,\hdots,\nu}\det(f_2(a_j - x_k))_{j,k=1,\hdots,\nu} \, da_1\cdots da_\nu\\
=\det\left(\int f_1(a - y_j)f_2(a - x_k) \, da\right)_{j,k=1,\hdots,\nu}
\end{multline}
for two functions $f_1,f_2\in \PF_{m}$.

\begin{proof}[Proof of Proposition \ref{prop:interpolating-SVD}]
By the comments above Propositions \ref{prop:interpolating-EVD} and \ref{prop:interpolating-SVD}, 
it re\-mains to address the issue of positivity.

To begin with, similarly as in Remark \ref{rem:InverseMatrix},
if $\FSV(\lambda)$ is the signed density associated with a function $w_0(x)$,
then the induced density under the inversion mapping 
$(\lambda_1,\hdots,\lambda_n) \mapsto (\lambda_1^{-1},\hdots,\lambda_n^{-1})$
is the signed density associated with the function 
$\widetilde{w}_0(x) := x^{-(n+1)} \, w_0(x^{-1})$.
Moreover, for the weight function in Eq.~\eqref{eq:weight-2}
we have $\widetilde{w}_0^{(p,q)}(x) = w_0^{(q,p)}(x)$.
Thus, it is possible to~prove the assertions
for certain restricted combinations of the parameters
and to extend the result ``by~inversion''.

{\it Part (i).} We prove the first statement that for $p,q\in\mathbb{N}_0\cup[n-1,\infty)$ the function $f_{\rm SV}^{(p,q)}$ is non-negative. It is sufficient to prove this claim for $p > 0$, $q = 0$, since the claim for $p > 0$, $q > 0$ 
then follows via the preceding comment and Proposition~\ref{prop:interpolating-convolution}.
Furthermore, in the case $p > 0$, $q = 0$ the claim is clear when $p \in \mynat$, $q = 0$,
since the function $\FSV(\lambda)$ then becomes the squared singular value density 
of the~product of $p$ Ginibre matrices, which is necessarily non-negative. 
Thus, it~remains to consider the case $p \not\in \mynat$, $p > n-1$, $q = 0$.

For $n = 1$, we have $w^{(p,0)}(x) = h_p(x)$, which is positive.
Hence, it~remains to consider the case where $n \ge 2$ and $p > n-1 \ge 1$.
Moreover, for~symmetry~reasons, it suffices to show that
\begin{align}
\label{eq:hq-determinant}
\det \left[ \left({-} x_k \frac{d}{d x_k}\right)^{j-1}  h_p(x_k) \right]_{j,k=1,\hdots,n} \ge 0
\quad\text{for $x_1 < \cdots < x_n$ in $(0,\infty)$,} 
\end{align}
or, equivalently,
\begin{align}
\label{eq:gq-determinant}
(-1)^{n(n-1)/2} \det\left[ \left(\frac{d}{d y_k}\right)^{j-1}  g_p(y_k) \right]_{j,k=1,\hdots,n} \ge 0 
\quad\text{for $y_1 < \cdots < y_n$ in $\myreal$.}
\end{align}
The equivalence of Eqs.~\eqref{eq:hq-determinant} and \eqref{eq:gq-determinant} 
follows from Eq.~\eqref{eq:gh-derivatives} as well as some straightforward row and column transformations.
The prefactor $(-1)^{n(n-1)/2}$ arises because the change of variables from $x$ to $y = -{\rm ln}\,x$ 
is decreasing, and hence must be compensated by inverting the order of the columns.

To~prove Eq.~\eqref{eq:gq-determinant}, we use a variation of an argument from \cite[page 388]{Karlin:1968}.
By~Eq.~\eqref{eq:gq-representation} as well as the functional equation and the product representation 
for the Gamma function, the Laplace transform of the density $g_p(x)$ is given by
\begin{align}\label{p.1}
\laplace g_p(s) = \Gamma^p(1+s) = e^{-\gamma ps} \prod_{l=1}^{\infty} \frac{e^{ps/l}}{(1+s/l)^p} \,,
\end{align}
where $\gamma = 0.577\ldots$ denotes the Euler constant.

We now start from the fact that, for any $l \in \mynat$, the~function $(1+s/l)^{-p}$ 
is the Laplace transform of the Gamma density $f_{p,l}(x)$, see Eq.~\eqref{eq:GammaDensity},
which is $\PF_{\lfloor p+1 \rfloor}$.
Thus, it follows that for any $m \in \mynat$,
the~density $g_{p,m}(x)$ corresponding to the Laplace transform
\begin{align}
\laplace g_{p,m}(s) = e^{-\gamma ps} \prod_{l=1}^{m} \frac{e^{ps/l}}{(1+s/l)^p}
\end{align}
is also $\PF_{\lfloor p+1\rfloor}$, being a finite additive convolution
of $\PF_{\lfloor p+1 \rfloor}$ functions.

Next, as $m \to \infty$, 
we clearly have $\laplace g_{p,m}(\imath t) \to \laplace g_p(\imath t)$ 
pointwise in $t \in \myreal$ as well as 
\begin{align}
  \big| \laplace g_{p,m}(\imath t) \big| 
= \prod_{l=1}^{m} \frac{1}{|1+\imath t/l|^p}
\leq \frac{1}{(1+t^2)^{p/2}}
\end{align}
for all $t \in \myreal$. 
Therefore, as $p > 1$, it follows from the Laplace inversion formula
and the dominated convergence theorem that
$g_{p,m}(x) \to g_p(x)$ pointwise in $x \in \myreal$.
Thus, the limit function $g_p(x)$ is also $\PF_{\lfloor p+1\rfloor}$, i.e.\@ 
for any $\nu=1,\hdots,{\lfloor p+1\rfloor}$ and any $x_1 < \hdots < x_\nu$, $y_1 < \hdots < y_\nu$,
we have
\begin{align}
\det(g_p(x_j - y_k))_{j,k=1,\hdots,\nu} \ge 0 \,.
\end{align}
Finally, letting $y_k \to 0$, see \eg \cite[page 7]{KS:1966} for the details of the argument, 
we~find that, for any $\nu=1,\hdots,{\lfloor p+1\rfloor}$ and any $x_1 < \hdots < x_\nu$,
\begin{align}
(-1)^{\nu(\nu-1)/2} \det\left(\frac{d^{k-1}}{dx_j^{k-1}} g_p(x_j)\right)_{j,k=1,\hdots,\nu} \ge 0 \,.
\end{align}
Since ${\lfloor p+1\rfloor} \geq n$, this proves Eq.~\eqref{eq:gq-determinant}.

{\it Part (ii).} Now we turn to the proof of the second claim of Prop.~\ref{prop:interpolating-SVD} that whenever $p$ or $q$ is not in $\mathbb{N}_0\cup[n-1,\infty)$ the function $f_{\rm SV}^{(p,q)}$ is not non-negative.
As explained at the beginning of the proof,
it~suffices to consider the case where $p \in [0,n-1]$ is not an~integer and $q \ge 0$.
We will show that there exists a point $x = (x_1,\hdots,x_n)$ 
with $0 < x_1 < \hdots < x_n < \infty$ 
and most coordinates $x_k$ very close to~zero such that $\FSV^{(p,q)}(x) < 0$.
Observe that for a point $x$ of this form, 
the sign of~$\FSV^{(p,q)}(x)$ is equal to~that of
$\det ( w^{(p,q)}_{j-1}(x_k))_{j,k=1,\hdots,n}$.
To obtain the asymptotic behavior of $w_{j-1}^{(p,q)}(x)$ as $x \to 0$ or $x \to \infty$, 
we will use the following fact.

\begin{theorem*}[Special Case of {\cite[Theorem 7.3.1]{Doetsch2}}] % \label{thm:DOE}
For $a,b \in \myreal$ with $a < b$, let $F(s)$ be an analytic function on the set $([a,b] + \imath \myreal) \setminus \{ a,b \} \subset \mycmplx$ 
which satisfies the following assumptions:
\begin{enumerate}[(a)]
\item 
there exists a representation $F(s) = \sum_{k=0}^{K} c_k (s-a)^{\alpha+k} + G(s)$
with $\alpha \in \myreal$, $G(s) \in \mathscr{C}^L([a,b[ + \imath \mathbb{R})$
$($the space of functions on $[a,b[ + \imath \mathbb{R}$ 
which are $L$-times continuously differentiable$)$,
and $K,L \in \mynat_0$,
\item
$F(x + \imath y) \to 0$ as $|y| \to \infty$, uniformly in $a \leq x \leq b$,
\item
for each $l=0,\hdots,L-1$, $F^{(l)}(a+\imath y) \to 0$ as $|y| \to \infty$,
\item
the integrals $\int_{+1}^{+\infty} e^{\imath ty} \, F^{(L)}(a+\imath y) \, dy$ 
and $\int_{-\infty}^{-1} e^{\imath ty} \, F^{(L)}(a+\imath y) \, dy$ 
(viewed as improper Riemann integrals)
are uniformly convergent for $t \geq 1$.
\end{enumerate}
Then, for any $c \in (a,b)$, we have 
\begin{align}
\label{eq:LA}
f(t) := \frac{1}{2\pi\imath} \int_{c-\imath\infty}^{c+\imath\infty} F(s) \, e^{ts} \, ds 
= e^{at} \sum_{k=0}^{K} \frac{c_k \, t^{-\alpha-k-1}}{\Gamma(-\alpha-k)} 
+ o(t^{-L} \, e^{at})
\end{align}
as $t \to \infty$, where $1/\Gamma(-\alpha-k) := 0$ when $\alpha+k \in \mynat_0$.
\end{theorem*}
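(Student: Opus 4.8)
\medskip
\noindent\textbf{Proof plan.}
The plan is to obtain this as the announced special case of \cite[Theorem 7.3.1]{Doetsch2}, so the only genuine task is to check that the hypotheses (a)--(d) are the right specialization --- a single algebraic branch point at the \emph{left} endpoint $s=a$, no logarithmic factors, and the right endpoint $s=b$ not contributing to the leading behaviour --- and to recall the mechanism producing \eqref{eq:LA}. First I would reduce to $a=0<b$ by the substitution $s\mapsto s+a$, which simply multiplies $f(t)$ by $e^{at}$; thus it suffices to prove \eqref{eq:LA} with the factor $e^{at}$ stripped off and with $P(s):=\sum_{k=0}^{K}c_k s^{\alpha+k}$ the singular part at the origin. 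By linearity of $F\mapsto f$ I would then handle the pieces $P$ and $G$ from assumption~(a) in turn.

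Since $P$ need not be integrable on the line $\re s=c$, I would first deform the contour of the \emph{full} analytic integrand $F(s)e^{ts}$ --- using the decay of $F$ at $\pm\imath\infty$ from (b) and the exponential decay of $e^{ts}$ ($t>0$) for $\re s\to-\infty$ --- into a Hankel-type contour that wraps the branch point at the origin and runs off to $\re s=-\infty$ along the cut of $s^{\alpha+k}$, and only then substitute $F=P+G$ and integrate term by term on this contour. The contribution of $s^{\alpha+k}$ is, after the rescaling $s=w/t$, Hankel's integral representation of the reciprocal Gamma function:
\begin{equation*}
\frac{1}{2\pi\imath}\int s^{\alpha+k}\,e^{ts}\,ds=\frac{t^{-\alpha-k-1}}{\Gamma(-\alpha-k)}\,,
\end{equation*}
read as $0$ when $\alpha+k\in\mynat_0$ (then $s^{\alpha+k}$ is entire and the wrapped contour integral vanishes). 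Summing over $k$ and restoring $e^{at}$ gives the explicit sum in \eqref{eq:LA}.

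For the smooth remainder $G\in\mathscr{C}^{L}$ the branch point is absent, so I would move its contour to the full vertical line $\re s=a$ (permissible by analyticity of $G$ in the open strip together with the vanishing of the horizontal pieces at infinity, which follows from (b) and (c)), and then integrate by parts $L$ times along $\re s=a$; the boundary terms vanish by (c), and what remains is $\pm t^{-L}$ times an integral of $G^{(L)}(a+\imath y)e^{\imath ty}$, which tends to $0$ as $t\to\infty$ by a Riemann--Lebesgue argument --- this is precisely where the uniform-convergence hypothesis~(d) is used. Hence this piece is $o(t^{-L})$, i.e.\ $o(t^{-L}e^{at})$ after the shift, which is the error term in \eqref{eq:LA}.

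The step I expect to be the real obstacle is the one just flagged: carrying out the contour deformation for the full integrand and keeping the estimates uniform enough that the term-by-term evaluation and the $o(t^{-L}e^{at})$ remainder are legitimate; the naive approach of splitting $F=P+G$ already on the line $\re s=c$ breaks down because of the non-integrability of $P$ there. This bookkeeping is exactly what \cite{Doetsch2} handles in full generality, and in our restricted setting (one branch point, no logarithms) it becomes routine.
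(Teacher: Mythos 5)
The statement you are asked to prove is not actually proved in the paper: it is quoted, as announced, from \cite[Theorem 7.3.1]{Doetsch2}, the only added content being the remark that summands with $\alpha+k\in\mynat_0$ may be absorbed into $G$, which justifies the convention $1/\Gamma(-\alpha-k):=0$. Your top-level plan --- check that hypotheses (a)--(d) are the right specialization and otherwise defer to the citation --- therefore coincides with what the authors do.

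Your sketch of the underlying mechanism, however, contains a genuine error at its very first step. You propose to deform the contour of the \emph{full} integrand $F(s)e^{ts}$ into a Hankel loop that wraps $s=a$ and runs off to $\re s=-\infty$. This is impossible: $F$ is only assumed analytic on $([a,b]+\imath\myreal)\setminus\{a,b\}$, and nothing whatsoever is assumed about $F$ to the left of the line $\re s=a$, so there is no region into which to deform. Only the explicit powers $(s-a)^{\alpha+k}$ continue to the cut plane and may be placed on a Hankel contour; $F$ and $G$ may not. The correct order of operations (and Doetsch's) is the reverse of yours: shift the contour only as far as $\re s=a$ (indenting at $s=a$ if necessary), localize near the singularity, split $F=P+G$ \emph{there}, and then compare the integral of each power over the local piece of the vertical contour with the full Hankel integral, estimating the discrepancy --- that comparison, not a global deformation, is the delicate step you correctly suspect to be the obstacle. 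Two further slips: $G=F-P$ need not tend to $0$ as $|\im s|\to\infty$, since the powers grow like $|y|^{\alpha+k}$ when $\alpha+k>0$, so your appeal to (b) and (c) to move the contour of $G$ and to kill the boundary terms in the integrations by parts does not stand as written; and hypothesis (d) concerns $F^{(L)}$ rather than $G^{(L)}$, so the two must be related through the explicit $P^{(L)}$ before Riemann--Lebesgue can be invoked. None of this undermines the strategy of citing \cite{Doetsch2} --- it is precisely the bookkeeping carried out there --- but as a standalone argument your sketch fails at the initial deformation.
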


Let us emphasize that due to the last convention, this result is true for all values $\alpha \in \myreal$.
Indeed, in the representation for $F(s)$ in part (a), the summands with $\alpha + k \in \mynat_0$
may be absorbed into the function $G(s)$.
In particular, when $\alpha \in \mynat_0$, the conclusion \eqref{eq:LA} 
reduces to the statement that $f(t) = o(t^{-L} \, e^{at})$ as $t \to \infty$.

Moreover, let us mention that by applying the preceding theorem to $F(-s)$,
we may also obtain, under the appropriate assumptions, the asymptotic behavior 
of $f(t)$ as $t \to -\infty$. Finally, let us recall that $\Gamma(z)$ has a simple pole with residue $1$ at the origin.

It follows from our definitions that, for each $j=1,\hdots,n$,
\begin{align}
   w_{j-1}^{(p,q)}(x) 
&= \biggl({-}x \frac{d}{dx}\biggl)^{j-1} w^{(p,q)}(x) \nonumber \\
&= \frac{1}{2\pi\imath} \int_{c-\imath\infty}^{c+\imath\infty} s^{j-1} \, \Gamma^{p}(s) \Gamma^q(1+n-s) \, x^{-s} \, ds \,, \quad x > 0 \,,
\end{align}
where $0 < c < n+1$.

To obtain the asymptotics of $w_{j-1}^{(p,q)}(x)$ as $x \to 0$, 
we set $x = e^{-t}$ and consider $w_{j-1}^{(p,q)}(e^{-t})$ as $t \to \infty$. 
Using the above result with $a := 0$, $b := 1+n$,
$\alpha := j-1-p$, $F(s) := s^{\alpha} (s^p \, \Gamma^p(s) \, \Gamma^q(1+n-s))$
and $K,L \in \mynat_0$ such that $\max \{ \alpha+1,0 \} \leq \alpha+K \le L < \alpha+K+1$, we~obtain
\begin{align}
\label{eq:asymptotics-2}
w_{j-1}^{(p,q)}(e^{-t}) = \sum_{k=0}^{K} \frac{c_k \, t^{p-j-k}}{\Gamma(p-j-k+1)} + o(t^{-L})
\end{align}
as $t \to \infty$, where
\begin{equation}
 c_k=\frac{1}{k!}\left.\frac{d^k}{ds^k}\left(s^p\Gamma^p(s)\Gamma^q(1+n-s)\right)\right|_{s=0},\ k=0,\ldots,K.
\end{equation}
Therefore, we have
\begin{align}
\label{eq:asymptotics-3}
w_{j-1}^{(p,q)}(x) = \frac{\Gamma^q(1+n) \, (- {\rm ln}\, x)^{p-j}}{\Gamma(p-j+1)} + o((- {\rm ln}\, x)^{p-j})
\end{align}
as $x \to 0$. \pagebreak[1]

For the asymptotics of $w_{j-1}^{(p,q)}(x)$ as $x \to \infty$, 
we set $x = e^{t}$ and change the integration variable $s \to 1+n-s$
such that the function reads
\begin{align}
\label{eq:asymptotics-4}
w_{j-1}^{(p,q)}(e^{t}) = e^{-(n+1)t} \, \frac{1}{2\pi\imath} \int_{c-\imath \infty}^{c+\imath \infty} (1\!+\!n\!-\!s)^{j-1} \, \Gamma^{p}(1\!+\!n\!-\!s) \, \Gamma^q(s) \, e^{ts} \, ds
\end{align}
as $t \to \infty$. Again we can apply the above theorem, 
but now with the identification $a := 0$, $b := 1+n$,
$\alpha := -q$, $F(s) := s^\alpha ( s^q \, \Gamma^q(s) \, (1+n-s)^{j-1} \, \Gamma^p(1+n-s) )$
and $K,L \in \mynat_0$ such that $\max \{ \alpha+1,0 \} \leq \alpha + K \leq L < \alpha + K + 1$.
Thus, we obtain the expansion
\begin{align}
\label{eq:asymptotics-5}
w_{j-1}^{(p,q)}(e^t) = e^{-(n+1)t} \, \sum_{k=0}^{K} \frac{c'_k \, t^{q-k-1}}{\Gamma(q-k)} + o(t^{-L} e^{-(n+1)t})
\end{align}
as $t \to \infty$, with the coefficients
\begin{equation}
 c'_k=\frac{1}{k!}\left.\frac{d^k}{ds^k}\left(s^q \, \Gamma^q(s) \, (1+n-s)^{j-1} \, \Gamma^p(1+n-s)\right)\right|_{s=0},\ k=0,\ldots,K.
\end{equation}
This yields
\begin{align}
\label{eq:asymptotics-6}
w_{j-1}^{(p,q)}(x) = \frac{(1+n)^{j-1} \, \Gamma^p(1+n) \, ({\rm ln}\, x)^{q-1}}{\Gamma(q) \, x^{n+1}} + o\left(\frac{({\rm ln}\, x)^{q-1}}{x^{n+1}}\right)
\end{align}
as $x \to \infty$. \pagebreak[1]

In the next step we consider the vector $x = (x_1,\hdots,x_n)$ with a fixed $x_n > 0$ 
such that $w_{n-1}^{(p,q)}(x_n) \ne 0$
and $x_k := e^{-\alpha/k}$, $k=1,\hdots,n-1$, for $\alpha > 0$ sufficiently large. The precise choice of $x_n$ will be specified later.
Then, we make use of the asymptotics~\eqref{eq:asymptotics-3} and plug it into the determinant
\begin{align}
\label{eq:asymptotics-8}
\det(w_{j-1}^{(p,q)}(x_k))_{j,k=1,\hdots,n} = w_{n-1}^{(p,q)}(x_n) \frac{C\alpha^\kappa}{\prod_{j=1}^{n-1} \Gamma(p-j+1)} + o(\alpha^{\kappa})
\end{align}
as $\alpha \to \infty$.
The exponent is $\kappa := (n-1)p-(n-1)(n-2)/2$, 
$C$ is a positive constant not depending on $\alpha$ or $x_n$,
and the implicit constant in the $o$-bound may depend on $x_n$.

In Eq.~\eqref{eq:asymptotics-8}, the numerator of the fraction is positive,
whereas the denominator of the fraction may be positive or negative,
depending on the number of negative Gamma factors.
Moreover, since $p < n-1$, the asymptotics~\eqref{eq:asymptotics-3} and \eqref{eq:asymptotics-6} for $j = n-1$ 
show that
\begin{align}
w_{n-2}^{(p,q)}(x) = C_1 (- {\rm ln}\, x)^{p-n+1} + o((- {\rm ln}\, x)^{p-n+1}) \overset{x\to0}{-\!\!\!\longrightarrow} 0
\end{align}
and 
\begin{align}
w_{n-2}^{(p,q)}(x) = C_2 ({\rm ln}\, x)^{q-1} / x^{n+1} + o(({\rm ln}\, x)^{q-1} / x^{n+1}) \overset{x\to\infty}{-\!\!\!-\!\!\!\longrightarrow} 0.
\end{align}
Here, $C_1$ and $C_2$ are real constants with $C_1 \ne 0$. These asymptotics in combination with the continuity of $w_{n-2}^{(p,q)}(x)$ for all $x>0$ imply that $w_{n-1}^{(p,q)}(x) = (- x \frac{d}{dx}) w_{n-2}^{(p,q)}(x)$ must change its sign 
on the interval $(0,\infty)$, since $w_{n-2}^{(p,q)}$ is not equal to zero.
Thus, we~may choose $x_n > 0$ in such a way that $w_{n-1}^{(p,q)}(x_n)$ is of sign opposite 
to that of the fraction in~Eq.~\eqref{eq:asymptotics-8}. Therefore Eq.~\eqref{eq:asymptotics-8} 
gets negative as $\alpha \to \infty$, which is what we needed to show.
\end{proof}

\section{Central Limit Theorem for Lyapunov Exponents}
\label{sec:Large-M}

Finally we want to make contact to the limiting distribution of the singular~values of the product $X^{(M)}=X_1\cdots X_M$ of independently and identically distributed (i.i.d.\@), bi-unitarily invariant $n\times n$ random matrices $X_j$ when $M\to\infty$ and the dimension $n$ is fixed. Let $\sigma_{M,1} \ge \cdots \ge \sigma_{M,n}$ and $|\lambda_{M,1}| \ge \cdots \ge |\lambda_{M,n}|$ denote the ordered singular values and the radii of the eigenvalues of $X^{(M)}$. Then the quantities
$\tfrac1M \ln \sigma_{M,k}$ and $\tfrac1M \ln |\lambda_{M,k}|$ are also called the (\emph{finite}) \emph{Lyapunov exponents} and the (\emph{finite}) \emph{stability exponents} of the matrix $X^{(M)}$. Strictly speaking, the~names \emph{Lyapunov exponents} and \emph{stability exponents} refer to the limits of the quantities above as $M \to \infty$, but we will use these names exclusively for the non-asymptotic quantities here.

The investigation of limit theorems for Lyapunov exponents has a long history,
see \eg \cite{Sazonov-Tutubalin:1966}, \cite{Bougerol-Lacroix:1985}
and the references therein. In particular, for the Gaussian matrix ensemble, 
Newman~\cite{Newman:1986} provided a simple argument that the Lyapunov exponents 
converge almost surely to deterministic values which are equal~to the expected values 
of the logarithms of $\sum_{j=1}^{n-k+1} |X_{j1}^{(1)}|^2$, $k=1,\hdots,n$.
In the past few years, new interest has arisen in this limit due to explicit results 
for the joint densities of the singular value and the eigenvalues at finite $n$ and $M$, 
see \cite{ABK:2014,Forrester:2013,Forrester:2015,Ipsen:2015,Kargin:2014,Reddy:2016}. 
In particular, the very recent work~\cite{Reddy:2016} contains a general result 
on the Lyapunov and stability exponents which we cite here.

\begin{proposition}[Central Limit Theorem for Lyapunov and Stability Exponents {\cite[Theorem 11]{Reddy:2016}}] \label{prop:CLT}

Let $(X_j)_{j \in \mynat}$ be a sequence of independently and identically distributed, bi-unitarily invariant $n\times n$ random matrices
such that, with positive probability, $X_1$ is not a scalar multiple of the identity matrix. 
Moreover, suppose that the first and second moments of the logarithms 
of the singular values of $X_1$ are finite, and let the vector $m = (m_k)_{k=1,\hdots,n}$ and the matrix $\Sigma = (\Sigma_{jk})_{j,k=1,\hdots,n}$ be the mean and the covariance matrix of the random vector $(\ln R_{11},\hdots,\ln R_{nn})$, where $(R_{11},\hdots,R_{nn})$ is the diagonal of the matrix $R$ in the $QR$-decomposition $X_1 = QR$ of the random matrix $X_1$.
Then, as $M \to \infty$ with $n$ fixed, the random vectors
\begin{equation}
\left\{ \sqrt{M} \left( \frac1M \ln \sigma_{M,k} - m_k \right) \right\}_{k=1,\hdots,n}
\ \text{and}\ 
\left\{ \sqrt{M} \left( \frac1M \ln |\lambda_{M,k}| - m_k \right) \right\}_{k=1,\hdots,n}
\end{equation}
converge in distribution to a Gaussian random vector with mean $0$ and covariance matrix $\Sigma$.
\end{proposition}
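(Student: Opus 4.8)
\emph{Reduction via the spherical transform.} The plan is to mimic the Fourier-analytic proof of the classical central limit theorem, with the spherical transform in the role of the characteristic function; the key structural input is the multiplicativity \eqref{eq:SphericalMultiplicationX}, which for the i.i.d.\ product gives $\mathcal{S}_{X^{(M)}}(s) = \mathcal{S}_{X_1}(s)^M$ and replaces ``the characteristic function of a sum of independent vectors is the product of the characteristic functions''. Besides the Gelfand--Na\u{\i}mark formula \eqref{eq:SFDefinition}, the spherical function has the Harish-Chandra integral representation $\varphi_s(g) = \int_K \prod_{j=1}^n |R_{jj}(kg)|^{2(s_j+\rho_j)}\,d^*k$ (cf.\ \cite{Helgason3}), where $g = QR$ is the $QR$-decomposition, $R_{jj}(\cdot) > 0$ the diagonal entries of $R$, and $\rho_j := (n+1)/2 - j$, so that $\varrho'_j + \rho_j = n$ for $\varrho'$ as in \eqref{rho-def}. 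Inserting this into \eqref{S-def}, using the left-$K$-invariance of the density and the $K$-invariance of the Haar measure $dg/|\det g|^{2n}$, and recalling $|\det g| = \prod_j |R_{jj}(g)|$, one obtains for all $t \in \myreal^n$
\[
\mathcal{S}_X(\varrho' + \imath t) = \mathbb{E}\Big[\exp\Big(2\imath \sum_{j=1}^n t_j \ln R_{jj}(X)\Big)\Big],
\]
i.e.\ on the line $\varrho' + \imath\myreal^n$ the spherical transform is precisely the characteristic function of the vector $(\ln R_{jj}(X))_{j=1,\dots,n}$ appearing in the statement (evaluated at $2t$).

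\emph{From multiplicativity to a classical CLT.} Combining this with $\mathcal{S}_{X^{(M)}} = \mathcal{S}_{X_1}^M$ and the uniqueness theorem for multivariate characteristic functions, we find that $(\ln R_{jj}(X^{(M)}))_j$ is distributed as a sum of $M$ independent copies of $(\ln R_{jj}(X_1))_j$ --- the matrix analogue of $\mathcal{M}(f_1 \circledast \cdots \circledast f_M) = \prod_l \mathcal{M} f_l$, and the point where the spherical transform does all the work. The finiteness of the first two moments of the logarithms of the singular values of $X_1$ passes to $(\ln R_{jj}(X_1))_j$ via the min--max bounds $\sigma_{n-j+1}(X_1)\cdots\sigma_{n}(X_1) \le R_{11}(X_1)\cdots R_{jj}(X_1) \le \sigma_1(X_1)\cdots\sigma_j(X_1)$, and its mean and covariance are $m$ and $\Sigma$. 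Hence the classical multivariate central limit theorem, applied to the above random walk, yields
\[
\Big\{\sqrt{M}\Big(\tfrac{1}{M}\ln R_{jj}(X^{(M)}) - m_j\Big)\Big\}_{j=1,\dots,n} \ \Rightarrow\ \mathcal{N}(0,\Sigma);
\]
equivalently, one Taylor-expands $\log\mathcal{S}_{X_1}(\varrho'+\imath\tau)$ to second order around $\tau = 0$, puts $\tau = t/\sqrt M$, recentres, and lets $M \to \infty$, obtaining the Gaussian transform $\exp(-\tfrac12\langle t,\Sigma t\rangle)$ (up to the harmless fixed factor $2$ above).

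\emph{From the $QR$-diagonal to singular values and eigenvalues.} It remains to show $\ln\sigma_{M,k} - \ln R_{kk}(X^{(M)}) = o_P(\sqrt M)$ and $\ln|\lambda_{M,k}| - \ln R_{kk}(X^{(M)}) = o_P(\sqrt M)$ as $M \to \infty$; Slutsky's theorem then finishes both statements. For the singular values, set $D_k := \ln(\sigma_{M,1}\cdots\sigma_{M,k}) - \ln(R_{11}(X^{(M)})\cdots R_{kk}(X^{(M)}))$. Since $\sigma_{M,1}\cdots\sigma_{M,k} = \|\Lambda^k X^{(M)}\|$ is the maximal $k$-dimensional volume expansion of $X^{(M)}$ and $R_{11}\cdots R_{kk} = \|(\Lambda^k X^{(M)})(e_1\wedge\cdots\wedge e_k)\|$ is its volume expansion on $\mathrm{span}(e_1,\dots,e_k)$, one has $D_k \ge 0$, $D_n = 0$, and $D_k \le -\ln|\langle e_1\wedge\cdots\wedge e_k,\,\eta_k^{(M)}\rangle|$, where $\eta_k^{(M)}$ is a unit vector spanning $\Lambda^k$ of a top-$k$ right-singular subspace of $X^{(M)}$. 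By bi-unitary invariance that subspace is Haar-distributed on the Grassmannian and independent of the singular values, so the overlap has a fixed, non-degenerate law and $0 \le D_k = O_P(1)$; consequently $\ln\sigma_{M,k} - \ln R_{kk}(X^{(M)}) = D_k - D_{k-1} = o_P(\sqrt M)$. For the eigenvalue moduli one invokes the classical fact that, for long products of i.i.d.\ matrices, the stability exponents agree with the Lyapunov exponents together with their Gaussian fluctuations --- a quantitative refinement of Oseledets' theorem, cf.\ the references preceding Proposition~\ref{prop:CLT}; for polynomial ensembles of derivative type this is immediate from the explicit eigenvalue density of Corollary~\ref{cor:Transfer}(b), whose radial part lives on a scale of order $\sqrt M$ that overwhelms the $O(1)$-scale repulsion of $|\Delta_n(z)|^2$, and from which one also sees that the stability exponents become asymptotically independent.

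\emph{Main difficulty.} The first two steps are essentially formal once the characteristic-function identity above is available (the continuity theorem needed to go from transforms back to distributions is, in that formulation, just the classical multivariate L\'evy theorem). The delicate point is the last step: the singular-value comparison is clean because bi-unitary invariance makes the singular flag uniformly random, but the eigenvalue comparison genuinely needs input from the theory of Lyapunov exponents --- an Oseledets-type statement that $X^{(M)}$ is, after an $M$-dependent unitary change of basis, block-triangular with diagonal blocks tracking $R_{jj}(X^{(M)})$ up to fluctuations of order $o(\sqrt M)$ --- and this requires genuine care precisely when some of the exponents $m_k$ coincide, in which case the relevant singular values of the exterior powers of $X^{(M)}$ are no longer exponentially separated.
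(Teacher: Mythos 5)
Your treatment of the Lyapunov exponents (the singular-value half) is essentially correct, and it takes a genuinely different route from the paper's. You identify $\mys_{X}(\varrho'+\imath t)$ with the characteristic function of $(\ln R_{jj}(X))_{j}$ --- this is exactly Lemma~\ref{lemma:R-MellinTransform} restricted to the critical line --- and then combine $\mys_{X^{(M)}}=\mys_{X_1}^{M}$ with the classical multivariate CLT and the exterior-power comparison $0\le D_k\le-\ln|\langle e_1\wedge\cdots\wedge e_k,\eta_k^{(M)}\rangle|=O_P(1)$, the last bound using that bi-unitary invariance makes the top-$k$ right singular subspace Haar-distributed and independent of the singular values. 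The paper instead performs a Laplace-type asymptotic analysis of the spherical \emph{inversion} formula for the joint density of the squared singular values (Eqs.~\eqref{eq:result-4}--\eqref{eq:result-7}), which requires additional decay and smoothness assumptions on $\mys_{X_1}$ but yields local, density-level convergence and the explicit permanental form \eqref{eq:result-8}. Your route needs only convergence in distribution, works without assuming a density for $X_1$, and avoids the inversion formula, at the price of the $O_P(1)$ comparison between $\sigma_{M,k}$ and $R_{kk}(X^{(M)})$; it is in fact closer in spirit to the original argument of Reddy than to the paper's.

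The eigenvalue half, however, has a genuine gap. You ``invoke the classical fact that, for long products of i.i.d.\ matrices, the stability exponents agree with the Lyapunov exponents together with their Gaussian fluctuations.'' This is not a classical fact: it fails for general (non-isotropic) i.i.d.\ products, and in the isotropic case it is precisely the content of the second convergence statement in Proposition~\ref{prop:CLT}, i.e., the assertion you are trying to prove --- so this step is circular. Your fallback via the explicit eigenvalue density of Corollary~\ref{cor:Transfer}(b) covers only polynomial ensembles of derivative type, not the general bi-unitarily invariant case, and is itself only a heuristic. The paper closes this gap with an exact, non-asymptotic distributional identity obtained by comparing Eqs.~\eqref{Slimit-rel.d} and \eqref{eq:EigenvalueModulusDensity}: for \emph{any} bi-unitarily invariant random matrix, the joint distribution of the moduli of the eigenvalues is the symmetrization of that of the QR-diagonal $(R_{11},\hdots,R_{nn})$ --- a Kostlan-type theorem following from the Harish transform representation of the radial eigenvalue density. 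Given this identity, the stability-exponent CLT is literally the same statement as the CLT for $(\ln R_{jj}(X^{(M)}))_{j}$ that you already established, and no Oseledets-type argument (hence no discussion of coinciding exponents) is needed. Without this identity or a substitute for it, your argument proves only the first of the two convergence statements.
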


For completeness, we emphasize that Proposition~\ref{prop:CLT} is stated and proved  not only for products of bi-unitarily invariant (complex) matrices, but also for products of bi-orthogonally invariant (real) matrices in \cite{Reddy:2016}. Moreover, one can also random\-ize the order of the singular values and of the eigenvalues of $X^{(M)}$, which naturally arises in the alternative derivation of Proposition~\ref{prop:CLT} we present below. This~sym\-metrized version was also derived in Refs.~\cite{ABK:2014,Ipsen:2015} for the induced Laguerre ensemble. Since the components of the vector $m$ are pairwise different \cite{Reddy:2016}, both representations, ordered or not, are equivalent and only a matter of taste.

As pointed out in \cite{Reddy:2016}, in the complex case, it turns out that for products of Ginibre or truncated unitary matrices, the (ordered) Lyapunov exponents and the (ordered) stability exponents are asymptotically independent. Our aim is to show that this result remains true for any polynomial ensemble of derivative type.

\begin{proposition} \label{prop:CLT-DPE}
Suppose that, in the situation of Proposition \ref{prop:CLT}, the random matrices $X_j$ are taken from a polynomial ensemble of derivative type. 
Then the covariance matrix $\Sigma$ is diagonal, i.e.\@ the Lyapunov and stability exponents are asymptotically independent.
\end{proposition}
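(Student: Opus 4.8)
The plan is to compute, for a polynomial ensemble of derivative type, the exact joint law of the diagonal entries $R_{11},\dots,R_{nn}$ of the triangular factor in the $QR$-decomposition $X_1=QR$ and to show that these entries are in fact \emph{independent}. Since the covariance matrix of a random vector with independent components is diagonal, and since Proposition~\ref{prop:CLT} defines $\Sigma$ as precisely the covariance matrix of $(\ln R_{11},\dots,\ln R_{nn})$, the assertion follows at once: the asymptotic independence of the Lyapunov and stability exponents is then just the statement that the limiting Gaussian vector in Proposition~\ref{prop:CLT} has diagonal covariance.

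The one harmonic-analytic ingredient I would invoke is the standard fact that the spherical transform, restricted to the normalization line $\varrho'+\imath\myreal^n$, is the multivariate Fourier transform of the law of the abelian component of the Iwasawa decomposition. Concretely: writing $X=k\,d\,t$ with $k$ unitary, $d$ positive diagonal and $t$ upper unitriangular, one has $d=\operatorname{diag}(R_{11},\dots,R_{nn})$, and for a bi-unitarily invariant random matrix $X$ the map $t\mapsto\mys_X(\varrho'+\imath t)$ is the characteristic function of $(\ln R_{11}^2,\dots,\ln R_{nn}^2)$, up to a fixed permutation of the coordinates. This is the identity ``spherical transform $=$ Euclidean Fourier transform of the Abel transform'' (up to the $\varrho$-shift); I would either quote it from the general theory~\cite{Helgason3} or derive it directly from the Gelfand--Na\u{\i}mark formula~\eqref{eq:SFDefinition}, the Iwasawa factorization of Lebesgue measure, and the left-$K$-invariance of the law of $X$. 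The case $n=1$, where $\mys_X(\varrho'+\imath t)=\mellin\FSV(1+\imath t)=\ee\bigl[e^{\imath t\ln|X|^2}\bigr]$, is immediate and fixes the normalization.

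Granting this, Corollary~\ref{cor:SphericalTransform} finishes the argument. If $X_1$ is drawn from the polynomial ensemble of derivative type associated with $\omega$, then, using $\varrho'_k-(n-1)/2=k$, the vector $(\ln R_{11}^2,\dots,\ln R_{nn}^2)$ has, after a suitable permutation of its entries, the characteristic function
\[
t\longmapsto\mys_{X_1}(\varrho'+\imath t)=\prod_{k=1}^n\frac{\mellin\omega(k+\imath t_k)}{\mellin\omega(k)},
\]
a product in which the $k$-th factor depends on the single coordinate $t_k$ only. A probability distribution on $\myreal^n$ whose characteristic function factors in this way has independent coordinates; hence $\ln R_{11}^2,\dots,\ln R_{nn}^2$, and therefore $\ln R_{11},\dots,\ln R_{nn}$, are independent, so $\Sigma$ is diagonal. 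One moreover reads off explicit expressions for the diagonal entries of $\Sigma$ and for the means $m_k$ in terms of the first two logarithmic derivatives of $\mellin\omega$ at the integers $1,\dots,n$. All of this is consistent with the Fourier-analytic proof of the full central limit theorem sketched in this section, where the covariance of the limiting Gaussian is the Hessian at $\varrho'$ of $\log\mys_{X_1}(s)=\sum_{k=1}^n\bigl[\log\mellin\omega(s_k-(n-1)/2)-\log\mellin\omega(k)\bigr]$, which is diagonal precisely because this function is a sum of functions of the individual coordinates.

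The only non-routine point is the harmonic-analytic identification in the second paragraph: matching the paper's normalizations of $\varphi_s$ and $\varrho'$ with the Iwasawa picture so that $\mys_X(\varrho'+\imath t)$ is \emph{exactly} the characteristic function of the squared $QR$-diagonal, and not of some rescaled or reparametrized version of it. Once this is pinned down, the rest is the one-line computation above together with the uniqueness theorem for characteristic functions and Proposition~\ref{prop:CLT}.
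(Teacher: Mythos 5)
Your proposal is correct and follows essentially the same route as the paper: the harmonic-analytic identification you flag as the key point is exactly the paper's Lemma~\ref{lemma:R-MellinTransform} (the spherical transform equals the multivariate Mellin transform of $(R_{11}^2,\hdots,R_{nn}^2)$ up to the shift by $\sigma'(\varrho')$, derived from the Iwasawa-decomposition formula \cite[Eq.~(IV.2.7)]{Helgason3}), after which the factorization in Corollary~\ref{cor:SphericalTransform} yields independence of the $R_{jj}$ and hence diagonal $\Sigma$, just as you argue.
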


To obtain this result, we will use another way of deriving Proposition~\ref{prop:CLT} in the complex case, viz.\@ by means of the spherical transform. The interest in this approach is due to the fact that, at least in principle, it should be able to yield much more precise information about the convergence in distribution, similarly to the Fourier transform in classical probability theory. 
In fact, the spherical transform has been used to derive a variety of central limit theorems on matrix spaces, with different assumptions and different scalings, see \eg \cite{Sazonov-Tutubalin:1966,Bougerol:1981,Terras:1984,Terras,Richards:1989,GL:1995,Voit} and the references therein.
Although this approach seems to be well known, we include a~sketch of a proof for didactic purposes
and for the sake of completeness. In addition to that, our discussion will also lead to some new observations which shed some light into the recent developments in the field of random matrix theory, see Proposition \ref{prop:CLT-DPE} above and Corollary \ref{cor:Characterization} below.

To keep the presentation short, we will restrict ourselves to a particularly simple situation.
However, let us mention that it is possible to adapt the proof to the more general situation 
of Proposition \ref{prop:CLT} by using a few additional arguments. We assume that the random matrix 
$X_1$ has a bi-unitarily invariant density $f_G$ with respect to Lebesgue measure. 
Also, we assume that the spherical transform $S_{X_1}(z)$ decays sufficiently fast 
as $z \to \infty$ in $\operatorname{conv}(\mathbb{S}_n(\varrho')) + \imath \mathbb{R}^n$,
where $\operatorname{conv}(\mathbb{S}_n(\varrho'))$ denotes the convex hull of the orbit 
of the vector $\varrho'$ from Eq.~\eqref{rho-def} under the natural action of the symmetric group 
$\mathbb{S}_n$. In particular, this~implies that the function $S_{X_1}^{M}(\varrho' + \imath s) \Delta_n(\varrho' + \imath s)$ is integrable in $s \in \mathbb{R}^n$ for all $M \in \mynat$ 
large enough.

We start with the multiplication theorem~\eqref{eq:SphericalMultiplicationX} 
for the spherical transform, which implies that $S_{X^{(M)}}=S_{X_1}^M$. 
Let $\FSV^{(M)} := \mathcal{I}(f_G^{\circledast M})$ be the joint density of the squared singular values of $X^{(M)}$, where $f^{\circledast m}$ denotes the $m$-fold convolution power of $f$ and $\mathcal{I}$ denotes the operator defined in Eq.~\eqref{I-def}. Then, by spherical inversion, see~\eg~\cite[Lemma 2.10]{KK:2016a} and note that the regularization there may be omitted by our previous assumptions, we~have
\begin{multline}
 \FSV^{(M)}(\lambda) = \frac{1}{(n!)^2\prod_{j=0}^{n-1}j!}\Delta_n(\lambda)\int_{\mathbb{R}^n} (S_{X_1}(\varrho'+\imath s))^M \\
 \times\Delta_n(\varrho'+\imath s)\det[\lambda_b^{-c-\imath s_c}]_{b,c=1,\ldots,n}\prod_{j=1}^n\frac{ds_j}{2\pi} \,.
\end{multline}
To prepare for the limit $M\to\infty$, we substitute $\lambda=\tilde\lambda^M$,
which yields the density
\begin{multline}
 \FSV^{(M)}(\tilde\lambda^M)M^n\det \tilde\lambda^{M-1} =\frac{M^n\det \tilde\lambda^{M-1}}{ (n!)^2\prod_{j=0}^{n-1}j!}\Delta_n(\tilde\lambda^M) \int_{\mathbb{R}^n}
 (S_{X_1}(\varrho'+\imath s))^M \\\times
 \Delta_n(\varrho'+\imath s)\det[\tilde\lambda_b^{-M(c+\imath s_c)}]_{b,c=1,\ldots,n}\prod_{j=1}^n\frac{ds_j}{2\pi}.
\end{multline}
The prefactor $M^n\det \tilde\lambda^{M-1}$ is the Jacobian of the substitution. Expanding the~two determinants $\Delta_n(\tilde\lambda^M) = \det[\tilde\lambda_b^{M(c-1)}]$ and $\det[\tilde\lambda_b^{-M(c+\imath s_c)}]$, we find that
\begin{multline}
 \FSV^{(M)}(\tilde\lambda^M) M^n\det \tilde\lambda^{M-1} = \frac{M^n\det \tilde\lambda^{-1}}{(n!)^2\prod_{j=0}^{n-1}j!}\sum_{\sigma_1,\sigma_2\in\mathbb{S}_n}{\rm sign}\,\sigma_1\sigma_2 \\\times 
\int_{\mathbb{R}^n} (S_{X_1}(\varrho'+\imath s))^M 
\Delta_n(\varrho'+\imath s)\prod_{j=1}^n\tilde\lambda_{\sigma_2^{-1}(j)}^{-M(j+\imath s_j-\sigma_1\sigma_2^{-1}(j))}\frac{ds_j}{2\pi},
\label{eq:result-4}
\end{multline}
where $\mathbb{S}_n$ is the symmetric group of $n$ elements and ``${\rm sign}$" is $+1$ for even permutations and $-1$ for odd ones. 
Now, by our simplifying assumptions, we can make the~substitution $\imath s\to \imath s/M-\varrho'+\sigma_1\sigma_2^{-1}(\varrho')$, where $(\sigma_1\sigma_2^{-1}(\varrho'))_j=\varrho'_{\sigma_1\sigma_2^{-1}(j)}$, and pull the domain of integration back to $\myreal^n$. This is possible because, for $f_G \in L^{1,K}(G)$, the spherical transform $\mathcal{S} f_G(s)$ is analytic in the interior of the~set $\operatorname{conv}(\mathbb{S}_n(\varrho')) + \imath \myreal^n$ when regarded as a function of $n-1$ of the variables $s_j$, with the sum $\sum_{j=1}^{n} s_j$ held fixed;
compare \cite[Theorem IV.8.1 and Corollary IV.8.2]{Helgason3}.
Thus, we obtain
\begin{multline}
 \FSV^{(M)}(\tilde\lambda^M)M^n\det \tilde\lambda^{M-1}
 =\frac{\det \tilde\lambda^{-1}}{(n!)^2\prod_{j=0}^{n-1}j!}\sum_{\sigma_1,\sigma_2\in\mathbb{S}_n}{\rm sign}\,\sigma_1\sigma_2 \\\times \int_{\mathbb{R}^n} \left[S_{X_1}\left(\sigma_1\sigma_2^{-1}(\varrho')+\imath \frac{s}{M}\right)\right]^M
\Delta_n\left(\sigma_1\sigma_2^{-1}(\varrho')+\imath \frac{s}{M}\right) \prod_{j=1}^n\tilde\lambda_{\sigma_2^{-1}(j)}^{-\imath s_j}\frac{ds_j}{2\pi}.
\label{eq:result-5}
\end{multline}
In the next step, we expand the logarithm of the spherical transform in a Taylor series up to the second order in $1/M$, which requires certain assumptions. Since the spherical transform is symmetric in its arguments, it is sufficient to specify the expansion around the point $\sigma'(\varrho')=\{\varrho'_{\sigma'(j)}\}_{j=1,\ldots,n}$, where the permutation $\sigma'\in\mathbb{S}_n$ is given by $\sigma'(j)=n-j+1$, $j=1,\hdots,n$. Then the first derivatives may be expressed in terms of the vector
\begin{equation}
\label{eq:M}
m= - \frac{\imath}{2} \left\{\left.\frac{\partial}{\partial s_a}{\rm ln}\,S_{X_1}\left(\imath s +\sigma'(\varrho')\right)\right|_{s=0}\right\}_{a=1,\hdots,n},
\end{equation}
and the second derivatives in terms of the matrix
\begin{equation}
\label{eq:S}
\Sigma= - \frac{1}{4} \left\{\left.\frac{\partial^2}{\partial s_a\partial s_b}{\rm ln}\,S_{X_1}\left(\imath s+\sigma'(\varrho')\right)\right|_{s=0}\right\}_{a,b=1,\ldots,n},
\end{equation}
where all derivatives are real derivatives. It will eventually turn out that only this particular definition with $\sigma'(\varrho')$ instead with $\varrho'$ is consistent with that in Proposition~\ref{prop:CLT}. Moreover, it will follow from this connection that $m$ and $\Sigma$ have real components, the components of the vector $m$ are in strictly decreasing order, and the matrix $\Sigma$ is positive-definite. Thus the Taylor approximation reads
\begin{align}
\left[S_{X_1}\left(\sigma'(\varrho')+\imath \frac{s}{M}\right)\right]^M
&=
\exp \Big( 2\imath {s}^T m - \frac{2}{M} {s}^T \Sigma {s} + |s|^2 \, o(1/M) \Big) \nonumber\\
&=\exp \Big( 2\imath {s}^T m - \frac{2}{M} {s}^T \Sigma {s} \Big)\Big(1 + |s|^2 \, o(1/M) \Big) 
\label{eq:taylor}
\end{align}
as $M \to \infty$, where the symbol $(\,\cdot\,)^T$ denotes transposition and the $o$-bound holds uniformly in $s$ in any ball of radius $\mathcal{O}(\sqrt{M})$ around the origin.
Also the Vandermonde determinant in Eq.~\eqref{eq:result-5} can be expanded in $1/M$. 

Then, as $M \to \infty$, by our simplifying assumptions, we are left with a Gaussian integral in~$s$ which can be performed explicitly,
\begin{multline}
 \FSV^{(M)}(\tilde\lambda^M)M^n\det \tilde\lambda^{M-1} = \frac{M^{n/2}\det \tilde\lambda^{-1}}{(n!)^2\prod_{j=0}^{n-1}j!} \sum_{\sigma_1,\sigma_2\in\mathbb{S}_n}{\rm sign}\,\sigma_1\sigma_2  \frac{\Delta_n\left(\sigma_1\sigma_2^{-1}(\varrho')\right)}{(2\pi)^{n/2}\sqrt{\det(4\Sigma)}}\\
 \times\exp\left(-\frac{M}{8}(2m-\ln \sigma_1^{-1} \sigma'(\tilde\lambda))^T \Sigma^{-1}(2m-\ln\sigma_1^{-1} \sigma'(\tilde\lambda)) \right) + o(M^{n/2}) \,,
 \label{eq:result-6}
\end{multline}
where ${\rm ln}\,\sigma(\tilde\lambda)$ is a short-hand notation for $\{{\rm ln}\,\tilde\lambda_{\sigma(a)}\}_{a=1,\hdots,n}$ and the $o$-bound holds locally uniformly in $\widetilde\lambda$.

Since $\Delta_n\left(\sigma_1\sigma_2^{-1}(\varrho')\right) = {\rm sign}\,\sigma_1\sigma_2^{-1}\ \prod_{j=0}^{n-1} j!$ and the permutation $\sigma'$ may be absorbed into $\sigma := \sigma_1^{-1} \sigma'$ due to the summation over $\sigma_1$, the result simplifies to
\begin{multline}
 \FSV^{(M)}(\tilde\lambda^M)M^n\det \tilde\lambda^{M-1}= \frac{M^{n/2}}{(2\pi)^{n/2}n!}\frac{1}{\sqrt{\det(4\Sigma)}\det \tilde\lambda} \\
 \times \sum_{\sigma\in\mathbb{S}_n}
\exp\left(-\frac{M}{8}(2m-{\rm ln}\,\sigma(\tilde\lambda))^T\Sigma^{-1}(2m-{\rm ln}\,\sigma(\tilde\lambda))\right) + o(M^{n/2}) \,.
\label{eq:result-7}
\end{multline}
When substituting $y :=\ln (\tilde\lambda)/2$, we can identify $m$ and $ \Sigma/M$ as the mean and the covariance matrix of the asymptotic Gaussian distribution of the Lyapunov exponents of $X^{(M)}$. Thus, we arrive at the unordered version of the first part of Prop.~\ref{prop:CLT}.

For polynomial ensembles of derivative type, the spherical transform factorizes by Eq.~\eqref{eq:SphericalTransform-12}, from which it follows that the matrix $\Sigma$ is diagonal, and the previous result simplifies further to a permanent
\begin{multline}
 \FSV^{(M)}(\tilde\lambda^M)M^n\det \tilde\lambda^{M-1} \\ = \frac{1}{n!} \, {\rm perm}\left[\sqrt{\frac{M}{8\pi\Sigma_{aa}}}\frac{1}{\tilde\lambda_b} \exp\left(-\frac{M}{8\Sigma_{aa}}(2m_a-{\rm ln}\,\tilde\lambda_b)^2\right) \right]_{a,b=1,\ldots,n} + o(M^{n/2}) \,,
\label{eq:result-8}
\end{multline}
which is the first part of Prop.~\ref{prop:CLT-DPE}.
This result was already derived for particular Meijer G-ensembles, 
especially the induced Wishart-Laguerre ensemble, see \cite{ABK:2014,Ipsen:2015}.

It remains to justify the Taylor expansion \eqref{eq:taylor} and to clarify the relationship between Eqs.~\eqref{eq:M} and \eqref{eq:S} and the moments in Proposition~\ref{prop:CLT}. This will follow from the following lemma, which will also be useful for the discussion of the stability exponents 
as well as for Corollary \ref{cor:Characterization}.

\begin{lemma}
\label{lemma:R-MellinTransform}
Let $X$ be a bi-unitarily invariant random matrix with a density \linebreak $f_G \in L^{1,K}(G)$,
let $X = QR$ be its $QR$-decomposition, and let $(R_{11},\hdots,R_{nn})$ be the diagonal of the matrix $R$.
Then
\begin{equation}
\label{eq:SM}
\mys_{X}(s) = \mellin_{(R_{11}^2,\hdots,R_{nn}^2)}(s -\sigma'(\varrho')+ \mathbf{1}_n),
\end{equation}
where $\mathbf{1}_n := (1,\hdots,1)$, $\sigma'(\varrho') = (\varrho'_{n-j+1})_{j=1,\ldots,n}$
with $\varrho_j'$ as in Eq.~\eqref{rho-def},
and $\mellin_{(R_{11}^2,\hdots,R_{nn}^2)}(s_1,\hdots,s_n) := \ee \left( \prod_{j=1}^{n} R_{jj}^{2(s_j-1)} \right)$
is the multivariate Mellin transform of the random vector $(R_{11}^2,\hdots,R_{nn}^2)$.
\end{lemma}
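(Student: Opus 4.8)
The plan is to combine the classical Harish--Chandra integral representation of the spherical function with the Iwasawa decomposition of $G=\GL(n,\mycmplx)$, which is precisely the $QR$-decomposition. Write $G=KAN$ with $K=\mathrm U(n)$, $A$ the group of positive diagonal matrices and $N=T$; for $g=QR$ one then has $R=\mathbf a(g)\,u$ with $\mathbf a(g):=\diag\big(R_{11}(g),\dots,R_{nn}(g)\big)\in A$ and $u\in T$, and $|\det g|=\prod_j\mathbf a(g)_j$. I shall also use the opposite Iwasawa decomposition $g=\bar n\bar a\bar k$ ($\bar n\in T$, $\bar a\in A$, $\bar k\in K$), with $A$-component $\bar{\mathbf a}(g):=\bar a$; note that $\prod_j\bar{\mathbf a}(g)_j=|\det g|$ and that $g^{-1}=\bar k^{-1}\bar a^{-1}\bar n^{-1}$ is already in $KAN$-form, so that $\mathbf a(g^{-1})=\bar{\mathbf a}(g)^{-1}$.

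The key input is the representation $\varphi_s(g)=\int_K p_s(g^{-1}k)\,d^*k$, where $p_s$ is the power function of the $KAN$-decomposition: the left-$K$- and right-$N$-invariant function on $G$ with $p_s\big(\diag(a_1,\dots,a_n)\big)=\prod_{j=1}^n a_j^{\,2j-n-1-2s_j}$. This is the Harish--Chandra integral representation of the spherical function for $\GL(n,\mycmplx)$ (see e.g.\ \cite[Ch.~IV]{Helgason3}), and the exponent is the one for which this $K$-integral evaluates to the right-hand side of \eqref{eq:SFDefinition} --- the Gelfand--Na\u{\i}mark evaluation \cite{GelNai}, which may be re-derived by taking $g$ diagonal and using that the first column of a Haar-distributed unitary matrix is a uniformly distributed random unit vector. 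By the remark above, $p_s(g^{-1})=\prod_j\bar{\mathbf a}(g)_j^{\,2s_j-2j+n+1}$.

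Substituting this into $\mys_X(s)=\int_G f_G(g)\,\varphi_s(g)\,dg/|\det g|^{2n}$, applying Fubini and substituting $g\mapsto kg$ in the inner integral (using left-$K$-invariance of $f_G$, left-invariance of Haar measure, and $p_s\big((kg)^{-1}k\big)=p_s(g^{-1})$) collapses the $K$-integral and gives $\mys_X(s)=\int_G f_G(g)\,p_s(g^{-1})\,dg/|\det g|^{2n}=\int_G f_G(g)\prod_j\bar{\mathbf a}(g)_j^{\,2s_j-2j-n+1}\,dg$. Now decompose $dg$ via the opposite Iwasawa decomposition, $dg=c_n\,\delta(\bar a)\,d\bar n\,d\bar a\,d^*\bar k$ with $\delta(\bar a)=\prod_j\bar a_j^{\,4j-3}$; integrating out $\bar k$ (right-$K$-invariance of $f_G$) and then using the substitution $\bar n=\bar a\,u\,\bar a^{-1}$ --- which has Jacobian $\prod_{j<k}(\bar a_j/\bar a_k)^2$ on $T$ and turns the $T$-integral into $\int_T f_G(\bar a\,u)\,du$ --- one arrives at
\[
\mys_X(s)=c_n'\int_{A}\Big[\int_T f_G(\bar a\,u)\,du\Big]\prod_j\bar a_j^{\,2s_j-2j+n}\,d\bar a .
\]
On the other hand, decomposing $dg$ via the $QR$-decomposition itself, $dg=c_n''\,\delta'(\mathbf a)\,d^*q\,d\mathbf a\,du$ with $\delta'(\mathbf a)=\prod_j\mathbf a_j^{\,4(n-j)+1}$, and using $R_{jj}(g)=\mathbf a(g)_j$ together with $f_G(q\mathbf a u)=f_G(\mathbf a u)$, one computes directly $\mellin_{(R_{11}^2,\dots,R_{nn}^2)}(t)=\ee\big(\prod_j R_{jj}^{2(t_j-1)}\big)=c_n''\int_{A}\big[\int_T f_G(\mathbf a\,u)\,du\big]\prod_j\mathbf a_j^{\,2t_j+4(n-j)-1}\,d\mathbf a$.

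The two expressions involve the same inner $T$-integral and the same (Lebesgue) measure on $A$, so they coincide up to a multiplicative constant precisely when $2s_j-2j+n=2t_j+4(n-j)-1$, i.e.\ $t_j=s_j+j-\tfrac{3n-1}{2}$; since $\varrho'_{n-j+1}=(3n-2j+1)/2$, this is exactly $t_j=s_j-\varrho'_{n-j+1}+1$, the asserted shift $t=s-\sigma'(\varrho')+\mathbf 1_n$. Finally, the constant equals $1$: at $s=\sigma'(\varrho')$ one has $t=\mathbf 1_n$, so the right-hand side is $\mellin_{(R_{11}^2,\dots,R_{nn}^2)}(\mathbf 1_n)=1$, while the left-hand side is $\mys_X(\sigma'(\varrho'))=\mys_X(\varrho')=1$ by the invariance of $\varphi_s$ under permutations of $s$ and \eqref{eq:SFNormalization} (recall $f_G$ is a probability density). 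I expect the main obstacle to be the bookkeeping: reading off the exponent of $p_s$ correctly from \eqref{eq:SFDefinition}, and then tracking the two Iwasawa Jacobians and the conjugation Jacobian on $T$ while checking that all the changes of variables used on $K$, $A$ and $T$ preserve the relevant measures --- an error in any exponent would spoil the final shift.
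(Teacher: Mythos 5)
Your proof is correct and follows essentially the same route as the paper: both reduce the claim to the Iwasawa ($=QR$) decomposition formula for the spherical transform, under which the $A$-integral is recognized as the Mellin transform of $(R_{11}^2,\dots,R_{nn}^2)$ in the shifted argument $s-\sigma'(\varrho')+\mathbf{1}_n$. The only difference is one of presentation: the paper simply quotes that formula from Helgason (Eq.~(IV.2.7)) and reads off the density of the diagonal of $R$ from it, whereas you re-derive it from the Harish--Chandra integral representation with explicit Iwasawa and conjugation Jacobians and fix the overall constant by normalizing at $s=\sigma'(\varrho')$; your exponent bookkeeping (combined exponent $2s_j-2j+n$ on the $A$-integral, hence $t_j=s_j+j-(3n-1)/2$) agrees with the paper's.
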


\begin{proof}
We use another well-known representation of the spherical transform,
\begin{align}\label{Slimit-rel.c}
 Sf_G(s)= 2^n C_n^* \int_A \int_T f_G(at) \, dt \left( \prod_{j=1}^{n} a_j^{2s_j + 2\varrho_j' - 4n} \right)  \left( \prod_{j=1}^{n} a_j^{4(n-j) + 1} \right) \, da \,, 
\end{align}
for $f_G\in L^{1,K}(G)$; compare e.g.\@ \cite[Eq.~(IV.2.7)]{Helgason3}. This is based on the QR~de\-composition (or Iwasawa decomposition) of an element $g \in G$ in the form $g = k a t$ with $k \in K$, $a \in A$ and $t \in T$; see Subsection \ref{subsec:MatrixSpaces} for the definitions of these spaces and their reference measures. 

Note that the random vector $(R_{11},\hdots,R_{nn})$ corresponds to the main diagonal of the diagonal matrix $a$. Thus, its density can be obtained by setting $s_j := -\varrho_j' + 2n$ and integrating over $t\in T$, which yields
\begin{align}\label{Slimit-rel.d}
h(a) := 2^n C_n^*  \left( \prod_{j=1}^{n} a_j^{4(n-j) + 1} \right)\int_T f_G(at) \, dt \,, \qquad a \in A \,.
\end{align}
We therefore obtain
\begin{align}
Sf_G(s) &= \int_A h(a) \prod_{j=1}^{n} a_j^{2s_j + 2\varrho_j' - 4n} \, da \nonumber \\
        &= \ee \left( \prod_{j=1}^{n} R_{jj}^{2s_j + 2\varrho_j' - 4n} \right)
        = \mellin_{(R_{11}^2,\hdots,R_{nn}^2)}(s -\sigma'(\varrho')+ \mathbf{1}_n ) \,,
\end{align}
and the proof is complete.
\end{proof}

It follows from the preceding lemma that
\begin{multline}\label{rel-der-mom}
  \frac{d}{ds_j} \mys f_G(\imath s+\sigma'(\varrho') ) \bigg|_{s = 0}
= \frac{d}{ds_j} \mellin_{(R_{11}^2,\hdots,R_{nn}^2)}( \imath s+\mathbf{1}_n ) \bigg|_{s = 0} = 2\imath \, \ee(\ln R_{jj}) \,,
\end{multline}
which shows that our definition of the vector $m$ in Eq.~\eqref{eq:M} is consistent with that in Proposition~\ref{prop:CLT}, and a similar comment applies to the matrix $\Sigma$ in Eq.~\eqref{eq:S}. Furthermore, using the well-known properties of the Fourier transform, we now see that our differentiability assumptions on the spherical transform needed for Eq.~\eqref{eq:SM} are equivalent to the existence of the first and second moments of the random variables $\ln R_{ii}$, $i=1,\hdots,n$, and it is easy to  see that this is equivalent to the existence of the  first and second moments of the random variables $\ln \sigma_{1,i}$, $i=1,\hdots,n$, as~required in Theorem \ref{prop:CLT}. This concludes our discussion of the Lyapunov exponents. 

As regards the stability exponents, we start from the observation that the joint density $f_{\text{EVM}}(a)$ of the radii of the eigenvalues of a random matrix $X$ with a bi-unitarily invariant matrix density $f_G$ is given by 
\begin{align}
   f_{\text{EVM}}(a)
&= (2\pi)^n (\det a) \int_{({\rm U}(1))^n} \FEV(a \Phi) \, d^*\Phi \nonumber \\ 
&= \left( 2^n C_n^* \int_T f_G^{}(a t) \, dt \, \prod_{j=1}^{n} a_j^{n-2j+1} \right)
	\, (\det a)^n \, \frac{1}{n!} \text{perm}\Big(a_j^{2(k-1)}\Big)_{j,k=1,\hdots,n} \,,
\label{eq:EigenvalueModulusDensity}
\end{align}
where $a = \diag(a_1,\hdots,a_n) \in A$,
$({\rm U}(1))^n$ is the $n$-fold direct product of the unitary group ${\rm U}(1)$,
$\Phi = \text{diag}(e^{\imath \varphi_1},\hdots,e^{\imath \varphi_n})$,
$d^* \Phi$ denotes integration by the normalized Haar measure on $({\rm U}(1))^n$,
``$\text{perm}$" denotes the permanent, and the equality of the first and second line 
follows by combining Eq.~(2.19) and Lemma~2.7 in \cite{KK:2016a}.
The expression in the large round brackets is essentially the \emph{Harish transform}
\cite{JL:SL2R} of the function $f_G$, which is known to be symmetric in $a_1,\hdots,a_n$.
Thus, comparing \eqref{Slimit-rel.d} and \eqref{eq:EigenvalueModulusDensity}
and noting that
\begin{align}
  \text{perm}\Big(a_j^{2(k-1)}\Big)_{j,k=1,\hdots,n}
= \text{perm}\Big(a_j^{2(n-k)}\Big)_{j,k=1,\hdots,n}
= \sum_{\sigma \in \mathbb{S}_n} \prod_{j=1}^{n} a_j^{2(n-\sigma(j))}
\end{align}
is the symmetrization of $\prod_{j=1}^{n} a_j^{2(n-j)}$, we find that 
the joint distribution of the radii of the eigenvalues is the symmetrization of that
of the vector $(R_{11},\hdots,R_{nn})$ in Lemma~\ref{lemma:R-MellinTransform}.

Thus, it remains to obtain the limiting distribution of the vector $(R_{11},\hdots,R_{nn})$.
Here we may use that, by multivariate Mellin inversion,
\begin{align}
f_{(R_{11},\hdots,R_{nn})}(x_1,\hdots,x_n)
= \int_{\myreal^n} \mellin_{(R_{11}^2,\hdots,R_{nn}^2)}(\pmb{1}_n + \imath s) \, \prod_{j=1}^{n} x_j^{-(1 + \imath s_j)} \, \frac{ds_j}{2\pi} \,,
\end{align}
where the first factor inside the integral is equal to $\mys(\sigma'(\varrho') + \imath s)$ by Lemma \ref{lemma:R-MellinTransform}. Thus, the expansion of the Mellin transform at the point $\pmb{1}_n$ is the same as that of the spherical transform at the point $\sigma'(\varrho')$. After replacing $x$ with $\widetilde{x}^M$ and repeating the argument leading to Eq.~\eqref{eq:result-7}, we arrive at the second part of Prop.~\ref{prop:CLT}.

Finally, for the special case of a polynomial ensemble of derivative type,
the matrix $\Sigma$ is again diagonal, which obviously yields the second part of Prop.~\ref{prop:CLT-DPE}. 

\pagebreak[2]

The circumstance that the spherical transform of a polynomial ensemble of derivative type factorizes also has consequences in the non-asymptotic context. In particular, we can use it to characterize the poly\-nomial ensembles of derivative type:

\begin{corollary}[Characterization of Polynomial Ensembles of Derivative Type]
\label{cor:Characterization}
A~bi-unitarily invariant random~matrix $X$ on $\GL(n,\mycmplx)$ with a density $f \in L^{1,K}(G)$
is from a poly\-nomial ensemble of derivative type if and only if 
the diagonal elements $R_{11},\hdots,R_{nn}$ of the~upper triangular matrix $R$ 
in the corresponding QR-decompo\-sition $X = QR$ are independent
and $R_{nn}^2$ has a density in $L^{1,n-1}_{[1,n]}(\myreal_{+})$.
\end{corollary}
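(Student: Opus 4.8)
The plan is to argue entirely on the level of the spherical transform, with Lemma~\ref{lemma:R-MellinTransform} serving as the bridge between $\mys_X$ and the law of the diagonal $(R_{11},\hdots,R_{nn})$ of the $QR$-decomposition, and with Corollary~\ref{cor:SphericalTransform} together with the uniqueness theorem for the spherical transform furnishing the characterisation, on the transform side, of the polynomial ensembles of derivative type. As preliminary bookkeeping I would record from \eqref{rho-def} that the shift vector $v := \sigma'(\varrho') - \mathbf{1}_n$ occurring in \eqref{eq:SM} has entries $v_j = (3n-2j-1)/2$, so that $v_j - (n-1)/2 = n-j$ (in particular $v_n = (n-1)/2$) and $\varrho'_j - (n-1)/2 = j$; these two numerical identities are exactly what make the outcome match the normalisation in \eqref{eq:SphericalTransform-12}.

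For the ``only if'' direction: if $X$ is drawn from the polynomial ensemble of derivative type associated with $\omega \in L^{1,n-1}_{[1,n]}(\myreal_+)$, then by Corollary~\ref{cor:SphericalTransform} its spherical transform $\mys_X(s) = \prod_{k=1}^{n} \mellin\omega(s_k-(n-1)/2)/\mellin\omega(k)$ is a product of functions each depending on a single variable $s_k$. By \eqref{eq:SM} the multivariate Mellin transform of $(R_{11}^2,\hdots,R_{nn}^2)$, which is the function $u \mapsto \mys_X(u+v)$, therefore factorises as $\prod_j \mellin\omega(u_j+n-j)/\mellin\omega(j)$, and by uniqueness of the multivariate Mellin transform the joint density of $(R_{11}^2,\hdots,R_{nn}^2)$, hence of $(R_{11},\hdots,R_{nn})$, equals the product of its marginals; that is, $R_{11},\hdots,R_{nn}$ are independent. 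Specialising the remaining arguments to $1$ shows (after the normalising factors telescope) that $R_{nn}^2$ has a density which is a scalar multiple of $\omega$, hence lies in $L^{1,n-1}_{[1,n]}(\myreal_+)$.

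For the ``if'' direction, suppose $R_{11},\hdots,R_{nn}$ are independent and $R_{nn}^2$ has a density $\omega_0 \in L^{1,n-1}_{[1,n]}(\myreal_+)$. By independence and \eqref{eq:SM} we get $\mys_X(s) = \prod_{j=1}^{n} g_j(s_j)$ with $g_j(s) := \mellin_{R_{jj}^2}(s - v_j)$, where by the choice of $v$ the last factor is $g_n(s) = \mellin\omega_0(s-(n-1)/2)$; that all the $g_j$ are actually defined on the relevant vertical lines, with this factorisation, follows from finiteness of $\mys_X(\varrho')$ and Tonelli's theorem. Next I would use that the spherical functions $\varphi_s$, hence $\mys_X$, are symmetric under permutations of the coordinates of $s$, and that $\mys_X$ is analytic on the interior of $\operatorname{conv}(\mathbb{S}_n(\varrho'))+\imath\myreal^n$: equating $\mys_X$ with the value obtained after transposing two adjacent arguments and cancelling the unaffected factors yields $g_j(s)\,g_{j+1}(s') = g_j(s')\,g_{j+1}(s)$ for all admissible $s,s'$, so that $g_j/g_{j+1}$ is a constant $\kappa_j$. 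Iterating down to $g_n$ gives $g_j = \big(\prod_{i=j}^{n-1}\kappa_i\big) g_n$ for every $j$, so $\mys_X(s)$ is a constant multiple of $\prod_{j=1}^{n} \mellin\omega_0(s_j-(n-1)/2)$; evaluating at $s=\varrho'$, where $\mys_X(\varrho')=1$ and $\mellin\omega_0(\varrho'_j-(n-1)/2) = \mellin\omega_0(j)$, fixes the constant and produces precisely the right-hand side of \eqref{eq:SphericalTransform-12} for $\omega_0$. By the (signed) version of Corollary~\ref{cor:SphericalTransform} and the uniqueness theorem for the spherical transform, the density of $X$ agrees almost everywhere with $\mathcal{I}^{-1}\FSV^{(n)}[\omega_0]$; since the former is a probability density and $\mathcal{I}^{-1}$ maps probability densities to probability densities, so is $\FSV^{(n)}[\omega_0]$, and hence $X$ is a random matrix from the polynomial ensemble of derivative type associated with $\omega_0$.

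The step I expect to be the main obstacle is, in the ``if'' direction, the passage from ``$\mys_X$ is a symmetric product'' to ``all factors are proportional to one common function''. This requires care about the vertical strips on which the one-dimensional Mellin transforms $g_j$ are a priori defined (on $1+\imath\myreal$ as transforms of probability densities; that the product formula places them, consistently, on the shifted lines $(2j-n)+\imath\myreal$ is guaranteed by the finiteness of $\mys_X(\varrho')$) and about the non-vanishing of the factors that get divided out when cancelling; both points are settled using non-negativity of the densities and analyticity of the spherical transform on the interior of $\operatorname{conv}(\mathbb{S}_n(\varrho'))+\imath\myreal^n$. The rest --- the two transform identities and the reindexing matching the answer with \eqref{eq:SphericalTransform-12} --- is routine.
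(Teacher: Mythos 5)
Your proof is correct and follows essentially the same route as the paper: Lemma~\ref{lemma:R-MellinTransform} translates the factorization of $\mys_X$ (Corollary~\ref{cor:SphericalTransform}) into independence of the $R_{jj}$ and back, and in the converse direction the permutation symmetry of the spherical transform forces all one-variable factors to be proportional to $\mellin h_n$, after which the uniqueness theorem identifies the ensemble. Your extra care about the vertical strips on which the individual Mellin transforms are defined (via $\mys_X(\varrho')<\infty$, Tonelli, and analyticity on $\operatorname{conv}(\mathbb{S}_n(\varrho'))+\imath\myreal^n$) only makes explicit what the paper's proof uses implicitly.
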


\begin{proof}
Let $X$ be a random matrix from the polynomial ensemble of derivative type associated with the weight function $\omega\in L_{[1,n]}^{1,n-1}(\mathbb{R}_+)$. Then the spherical transform $\mys_X$ factorizes by Corollary~\ref{cor:SphericalTransform}. By Lemma~\ref{lemma:R-MellinTransform}, this implies 
that the Mellin trans\-form of the vector $(R_{11}^2,\hdots,R_{nn}^2)$ factorizes,
which means that the diagonal elements of $R$ are independent random variables. Moreover, the densities of the random variables $R_{kk}^2$ are given by the family $\{x^{n-k}\omega(x)/\mellin\omega(n\!-\!k\!+\!1)\}_{k=1,\ldots,n}$. In particular, that of $R_{nn}^2$ is in $L^{1,n-1}_{[1,n]}(\myreal_{+})$ because $\omega$ is.

Conversely, suppose that the diagonal elements of $R$ are independent random variables. Let $h_1,\hdots,h_n$ denote the densities of their squares. Then
\begin{equation}
\mellin_{(R_{11}^2,\hdots,R_{nn}^2)}(s) = \prod_{j=1}^{n} \mellin h_j (s_j).
\end{equation}
By Lemma~\ref{lemma:R-MellinTransform}, it follows that for any $s \in \operatorname{conv}(\mathbb{S}(\varrho')) + \imath \mathbb{R}^n$ we have
\begin{equation}
\mys_X(s) = \prod_{j=1}^{n} \mellin h_j (s_j -\varrho'_{n-j+1}+1) .
\end{equation}
The product of these functions must be symmetric in $s = (s_1,\hdots,s_n)$ by the basic properties of the spherical transform. But a product of functions which are not identically zero is symmetric only if all the functions are proportional to one another. We therefore obtain
\begin{equation}
\mellin h_j(s_j-\varrho'_{n-j+1}+1)=\frac{\mellin h_n(s_j-\varrho'_{1}+1)}{\mellin h_n(\varrho'_{n-j+1}-\varrho'_{1}+1)}=\frac{\mellin h_n(s_j-(n-1)/2)}{\mellin h_n(n-j+1)}
\end{equation}
for all $s_j \in [(n+1)/2,(3n-1)/2]$ and $j=1,\ldots,n$, because $\mellin h_j(1) = 1$. Setting $\omega := h_n\in L^{1,n-1}_{[1,n]}(\myreal_{+})$, the Mellin-transform $\mellin \omega$ is defined on the set $[1,n] + \imath \mathbb{R}$ and
\begin{equation}
\mys_X(s) = \prod_{j=1}^{n} \frac{\mellin \omega(s_j-(n-1)/2)}{\mellin \omega(n-j+1)}.
\end{equation}
Hence, in view of the uniqueness theorem for the spherical transform and Corollary \ref{cor:SphericalTransform}, we may draw the conclusion that $X$ belongs to the polynomial ensemble of derivative type associated with the function $\omega$.
\end{proof}

As a byproduct of the proof we have found a simple relation between the densities of the diagonal elements $R_{11},\hdots,R_{nn}$ in the QR-decomposition and the density $\omega$. The density of $R_{jj}^2$ is given by $h_j(r)=r^{n-j}\omega(r)/\mellin\omega(n-j+1)$, $j=1,\ldots,n$.

\pagebreak[2]

\section{Conclusion}
\label{sec:conclusion}

With the help of the spherical transform we have investigated the distributions of products of independent  bi-unitarily invariant random matrices. In particular, we have derived transformation formulas for the joint probability densities of the squared singular values of a polynomial ensemble as well as the associated kernels and bi-orthogonal functions. These transformation formulas refer to the situation where a random matrix from the corresponding random matrix ensemble is multiplied by an independent random matrix from a polynomial ensemble of derivative type. These results generalize existing counterparts for the case that the polynomial ensemble of derivative type is an induced Laguerre (Ginibre / Gaussian) ensemble or an induced Jacobi (trun\-cated unitary) ensemble, see Ref.~\cite{KS:2014,Kuijlaars:2015,CKW:2015}.

Furthermore, for the densities on the matrix space $G={\rm GL}(n,\mathbb{C})$ arising from the independent products of $p$ Ginibre matrices and $q$ inverse Ginibre matrices, we were able to construct an analytic continuation in $p$ and $q$. However, the resulting density is a probability density only under certain restrictions on the parameters $p$ and $q$, see Eq.~\eqref{eq:NiceParameters}. Otherwise, the resulting matrix density and, hence, the induced joint density of the singular values are signed, which is very similar to Gindikin's theorem \cite{FK} about interpolations of sums of Wishart matrices. In particular our observation implies that the Ginibre ensemble is not infinitely divisible with respect to multiplication. In contrast to the behavior of the singular values, the induced joint density of the complex eigenvalues is always positive, for any positive values of $p$~and~$q$, and even a probability density. This intriguing behavior underlines our claim in Ref.~\cite{KK:2016a} that not every isotropic eigenvalue density corresponds to a bi-unitarily invariant random matrix ensemble. The question how rich the subset of those eigenvalue densities corresponding to a bi-unitarily invariant random matrix ensemble really is remains open.

As a third example of our approach we have shown how the results in Ref.~\cite{Reddy:2016} about the Lyapunov exponents (logarithms of the singular values) are related to the spherical transform. In particular, we have sketched an alternative way to show that the Lyapunov exponents are asymptotically Gaussian distributed when the number of factors in a product of independently and identically distributed, bi-unitarily in\-variant random matrices becomes large. In other words, the singular values are asymptotically log-normally distributed. This derivation is reminiscent of the standard proof of the central limit theorem for random vectors, 
and might open a way to extend the central limit theorem for Lyapunov exponents to the situation where the limiting distributions are not Gaussian but heavy-tailed.
In addition to that, our discussion has revealed that the Lyapunov exponents are asymptotically independent when the underlying random matrices belong to a polynomial ensemble of derivative type.

% ****************************************************************************************************

\section*{Acknowledgements}

We thank Gernot Akemann and Friedrich G\"otze for fruitful discussions. More\-over we acknowledge financial support by the {\it CRC 701: ``Spectral Structures and Topological Methods in Mathematics''} of the {\it  Deutsche Forschungsgemeinschaft}.

\medskip

\bigskip

\end{document}